\documentclass[12pt,a4paper]{amsart}
\usepackage{amscd}
\usepackage{amssymb}
\usepackage[centering,text={15.5cm,22cm}]{geometry}
\usepackage{graphicx,color}
\usepackage[all]{xy}
\usepackage{mathrsfs}
\usepackage{marvosym}
\usepackage{stmaryrd}
\usepackage{srcltx}
\usepackage{hyperref}

\definecolor{shadecolor}{rgb}{1,0.9,0.7}

\setlength{\marginparwidth}{12ex}
\setcounter{tocdepth}{2}

\newtheorem{theorem}{Theorem}[section]
\newtheorem{lemma}[theorem]{Lemma}
\newtheorem{lemma-definition}[theorem]{Lemma-Definition}
\newtheorem{proposition}[theorem]{Proposition}
\newtheorem{corollary}[theorem]{Corollary}

\theoremstyle{definition}
\newtheorem{setup}[theorem]{Setup}
\newtheorem{definition}[theorem]{Definition}

\newtheorem{convention}[theorem]{Convention}
\newtheorem{example}[theorem]{Example}

\newtheorem*{acknowledgement}{Acknowledgement}

\theoremstyle{remark}
\newtheorem{remark}[theorem]{Remark}

\numberwithin{equation}{section}
\numberwithin{figure}{section}

%===========================================================

%\newcommand {\lfor} {\mbox{$[\hspace{-1.5pt}[$}}
%\newcommand {\rfor} {\mbox{$]\hspace{-1.5pt}]$}}
\newcommand {\lfor} {\llbracket}
\newcommand {\rfor} {\rrbracket}

\newcommand{\DD} {\mathbb{D}}
\newcommand{\HH} {\mathbb{H}}
\newcommand{\NN} {\mathbb{N}}
\newcommand{\ZZ} {\mathbb{Z}}
\newcommand{\QQ} {\mathbb{Q}}
\newcommand{\RR} {\mathbb{R}}
\newcommand{\CC} {\mathbb{C}}

\newcommand{\PP} {\mathbb{P}}
\renewcommand{\AA} {\mathbb{A}}

\newcommand {\shD}  {\mathcal{D}}

\newcommand {\shF}  {\mathcal{F}}

\newcommand {\shM}  {\mathcal{M}}

\newcommand {\shO}  {\mathcal{O}}

\newcommand {\shU}  {\mathcal{U}}

\newcommand {\shW}  {\mathcal{W}}
\newcommand {\shX}  {\mathcal{X}}

\newcommand {\shZ}  {\mathcal{Z}}

\newcommand {\foX}  {\mathfrak{X}}

\newcommand {\fob}  {\mathfrak{b}}

\newcommand {\fop}  {\mathfrak{p}}

\newcommand {\fou}  {\mathfrak{u}}

%===========================================================

\newcommand {\an}  {\mathrm{an}}

\newcommand {\bary} {{\operatorname{bar}}}

\newcommand {\can} {{\mathrm{can}}}

\newcommand {\codim} {\operatorname{codim}}

\newcommand {\dlog} {\operatorname{dlog}}

\newcommand {\eps}  {\varepsilon}

\newcommand {\GL}  {\operatorname{GL}}

\newcommand {\gp}  {{\operatorname{gp}}}
\newcommand {\Gr}  {\operatorname{Gr}}

\newcommand {\Hom}  {\operatorname{Hom}}

\newcommand {\hra} {\hookrightarrow}

\newcommand {\Int}  {\operatorname{Int}}

\newcommand {\limdir} {\varinjlim}

\newcommand {\lra}  {\longrightarrow}
\newcommand {\ls}  {\dagger}

\renewcommand {\max} {{\operatorname{max}}}

\renewcommand{\O}  {\mathcal{O}}

\newcommand {\ori} {\operatorname{or}}

\renewcommand{\P}  {\mathscr{P}}

\newcommand {\ra}  {\to}

\newcommand {\Spec} {\operatorname{Spec}}
\newcommand {\Spf}  {\operatorname{Spf}}

\newcommand {\triang} {\triangle}

\newcommand {\trop}  {{\operatorname{trop}}}

\newcommand {\ul} {\underline}

\def\mydate{\ifcase\month \or January\or February\or March\or
April\or May\or June\or July\or August\or September\or October\or 
November\or December\fi \space\number\day,\space\number\year}

%=========================================================

%Definitions specific to this paper:
\newcommand {\Tbs} {T}

\begin{document}

%===========================================================

\title
[Canonical coordinates in toric degenerations]
{Canonical coordinates in toric degenerations}
%\mbox{\tiny -preliminary version-}}
\author{Helge Ruddat, Bernd Siebert}

% author one information
%\author{Helge Ruddat} 
\address{\tiny JGU Mainz, Institut f\"ur Mathematik, Staudingerweg 9, 55099 Mainz, Germany}
%\curraddr{}
%\thanks{This work was partially supported by DFG research grant RU 1629/1-1}
\email{ruddat@uni-mainz.de}

% author two information
%\author{Bernd Siebert} 
\address{\tiny Universit\"at Hamburg, Fakult\"at f\"ur Mathematik,
Bundesstra\ss e 55, 20146~Hamburg, Germany}
%\curraddr{}
\email{bernd.siebert@math.uni-hamburg.de}
\thanks{This work was partially supported by HR's Carl-Zeiss Postdoctoral Fellowship}

\maketitle
\setcounter{tocdepth}{1}
\tableofcontents
\bigskip

%===========================================================
%===========================================================
\section*{Introduction}
Mirror symmetry suggests to study families of varieties with a
certain maximal degeneration behaviour \cite{CdGP91}, \cite{Mo93},
\cite{De93}, \cite{HKTY95}. In the important Calabi-Yau case this
means that the monodromy transformation along a general loop around
the critical locus is unipotent of maximally possible exponent
\cite[\S2]{Mo93}. The limiting mixed Hodge structure on the
cohomology of a nearby smooth fibre is then of Hodge-Tate type
\cite{De93}.

An important insight in this situation is the existence of a
distinguished class of holomorphic coordinates on the base space of
the maximal degeneration \cite{Mo93}, \cite{De93}. Explicitly, these
\emph{canonical coordinates} are computed as $\exp$ of those period
integrals of the holomorphic $n$-form $\Omega$ over $n$-cycles that
have a logarithmic pole at the degenerate fibre. For an algebraic
family they are often determined as certain solutions of the
Picard-Fuchs equation solving the parallel transport with respect to
the Gau\ss-Manin connection. For complete intersections in toric
varieties these solutions can be written as hyper-geometric series.
In particular, canonical coordinates are typically transcendental
functions of the algebraic parameters. The coordinate change from
the algebraic parameters to the canonical coordinates is referred to
as \emph{mirror map}. The explicit determination of the mirror
map is an indispensable step in equating certain other period
integrals with generating series of Gromov-Witten invariants on the
mirror side. 

The purpose of the present paper is to address the topic of
canonical coordinates in the toric degeneration approach to mirror
symmetry developed by Mark Gross and the second author
\cite{logmirror1}, \cite{logmirror2}, \cite{affinecomplex}. In this
program, \cite[Corollary~1.31]{affinecomplex} provides a canonical class
of degenerations  defined over completions of affine toric
varieties. Our main result says that the toric monomials of the base
space are canonical in the above sense. In other words, the mirror
map is trivial. This is another important hint of the
appropriateness of the toric degeneration approach. In particular,
we expect that the Gromov-Witten invariants of the mirror are rather
directly encoded in the wall structure of \cite{affinecomplex}.
Another consequence of our result is that the formal smoothings
constructed in \cite{affinecomplex} lift to analytic families. In
order to prove this, we construct sufficiently many cycles using
tropical methods. The computation of the period integrals over
these we then carry out explicitly.

Morrison \cite{Mo93} defines canonical coordinates as follows. Let
$f:\shX\ra T$ be a maximal degenerating analytic family of
Calabi-Yau varieties. The fibre over $t\in T$ is denoted $X_t$ and
the central fibre of the degeneration lies over $0\in T$. Let
$D\subset T$ be the critical locus of $f$ where the fibres $X_t$ are
singular. Assuming $T$ smooth and $D$ to have simple normal
crossings denote by $T_1,\ldots,T_r$ the monodromies around the
irreducible components of $D$. The endomorphism given as any
positive linear combination of $\log T_1,\ldots,\log T_r$ defines
the weight filtration  $0\subset W_0\subset W_2\subset \ldots$ on
$H_n(X_t,\ZZ)$ for any fixed $t\not\in D$.  A vanishing $n$-cycle
$\alpha\in W_0$ is a generator of $W_0$. It is unique up to sign.
Let $\Omega$ be a non-vanishing section of
$\Omega^n_{\shX/T}(\log(\shD))$, a relative holomorphic volume form
with logarithmic poles along $\shD=\pi^{-1}(D)$. The fibrewise
integral of $\Omega$ over the parallel transport of an element in
$W_{2k}$ yields a function on $T$ with a logarithmic pole of order
at most $k$. Hence the following definition makes sense.

\begin{definition}[Canonical coordinates]
\label{def-cancoord}
Given $\beta\in W_2$ one defines a meromorphic function $h_\beta$ on
the base $\Tbs$ by
\[
h_{\beta}(t) = \exp\left(-2\pi i \frac{\int_\beta
\Omega}{\int_\alpha \Omega}\right), \quad t\in T\setminus D.
\]
Note that taking $\exp$ disposes of the ambiguity of the
monodromy around $X_0$ which adds multiples of $\alpha$ to $\beta$.
If $h_\beta$ extends as a holomorphic function to $T$ it is called a
\emph{canonical coordinate}.
\end{definition}

We consider the canonical degenerations given in
\cite{affinecomplex}. The central fibre $X_0$ is constructed from a
polarized tropical manifold $(B,\P,\varphi)$ and then a formal
degeneration $\foX\ra \Spf\CC\lfor t\rfor$ with central fibre $X_0$ is
obtained by a deterministic algorithm that takes as input a log
structure on $X_0$. Mumford's degenerations of abelian varieties
\cite{Mum72} are examples of such canonical degenerations.
Degenerating a Batyrev-Borisov Calabi-Yau manifold \cite{BB94} into
the toric boundary \cite{Gr05} gives another important example of
degenerations with the type of special fibre considered here, with a
priori non-canonical algebraic deformation parameters. One obtains
(formal) canonical families here by reconstructing the family up to
base change from the central fibre via \cite{affinecomplex} (with
higher-dimensional parameter space). The resulting base coordinate
then coincides with Morrison's canonical coordinates in
Definition~\ref{def-cancoord} as follows from the results of this
paper. The transformation from the algebraic to the transcendental
coordinate is the aforementioned mirror map.

The definition of $(B,\P,\varphi)$, which we recall in
\S\ref{subsec-intcomplex}, can be found in \cite[\S4.2]{logmirror1}.
Here $B$ is a real $n$-dimensional affine manifold with singular
locus $\Delta$ at most in codimension two. The linear part of its
holonomy is integral. The affine manifold comes with a decomposition
$\P$ into integral polyhedra and a multi-valued piecewise affine
function $\varphi:B\ra\RR$. The toric varieties given by the
lattice polytopes of $\P$ are the toric strata of $X_0$. The
singular locus $\Delta$ is part of the codimension two skeleton of the
barycentric subdivision of $\P$. The function $\varphi$ encodes the
discrete part of the log structure, namely toric local
neighbourhoods of $X_0$ in $\shX$, each given by a cone that is the
upper convex hull over $\varphi$ on a local patch of $B$.

For $k\in\NN$, let $X_k$ be the canonical smoothing of $X_0$ to
order $k$ constructed in \cite{affinecomplex}. If $X_0$ is
projective then the formal degeneration is induced by a formal
family $\hat\shX\to \Spec(\CC\lfor t\rfor)$ of schemes.\footnote{Details
for this statement without the cohomological assumptions of
\cite{affinecomplex}, Corollary~1.31, will appear in \cite{theta}} In
any case, at least if $X_0$ is compact, there exists an analytic
family $\shX\ra\Tbs$ whose restriction to order $k$ coincides with
$X_k$ (Theorem~\ref{Thm: versal}). We assume
$B$ to be oriented. We define tropical $1$-cycles in $B$ and show
how each such determines an $n$-cycle in the nearby fibres $X_t$ of
$X_0$ in $\shX$, unique up to adding a vanishing $n$-cycle. Under
the simplicity assumption \S\ref{subsec-simplicity}, we prove that
the tropically constructed $n$-cycles generate $W_2/W_0$, the
relevant graded piece of the monodromy weight filtration. We then
integrate the canonical $n$-form $\Omega$ on $\foX$ over these
cycles and compute the exponential of the result to order $k$ around
$0\in T$. Thus despite the logarithmic pole of the integral it makes
sense to talk about canonical coordinates for the formal family
$\foX\to \Spf(\CC\lfor t\rfor)$. The precise statement of the Main Theorem
below (Theorem~\ref{period-computed}) requires some explanations
that we now turn to.

Let $\Lambda$ and $\check\Lambda$ denote the local systems (stalks
isomorphic to $\ZZ^n$) of flat integral tangent vectors on
$B\setminus \Delta$. Let $i_*\Lambda$ and $i_*\check\Lambda$ be
their pushforward to $B$ (these are constructible sheaves). As
described in \cite[\S2.1]{logmirror1}, $X_0$ itself can be
reconstructed from $(B,\P,\varphi)$ together with an element $s$ in
$H^1(B,i_*\check\Lambda\otimes\CC^\times)$, see
\S\ref{subsec-gluing}. This one-cocycle is represented by a
collection $(s_{\tau_0\subset\tau_1})$ for $\tau_0,\tau_1\in\P$ and
is called \emph{gluing data}. If furthermore $B$ is \emph{simple}
(see \S\ref{subsec-simplicity}) then the log structure on
$X_0^\dagger$ is determined by the gluing data, see
\cite[Proposition~4.25, Theorem~5.2]{logmirror1}. Hence, in the
simple case, one may view $H^1(B,i_*\check\Lambda\otimes\CC^\times)$
as the moduli space of log structures on $X_0$.
\begin{definition}
\label{def-trop-1-cycle}
A tropical 1-cycle $\beta_\trop$ in $B$ is a graph with oriented
edges embedded in $B\setminus\Delta$ whose edges $e$ are labelled by
a non-trivial section $\xi_e\in \Gamma(e,\Lambda|_e)$. It is subject
to the following conditions. Its vertices lie outside the
codimension one skeleton $\P^{[n-1]}$ of $\P$ and its edges
intersect $\P^{[n-1]}$ in the interior of codimension one cells in
isolated points. A vertex is univalent if and only if it is
contained in $\partial B$. Finally, at each vertex $v$, the
following \emph{balancing condition} holds
\begin{equation}
\label{balancing}
\sum_{v\in e} \eps_{e,v} \xi_e=0.
\end{equation}
Here $\eps_{e,v}\in\{-1,1\}$ is the orientation of $e$ at $v$. 
\end{definition}

Similar cycles have been known in the theory of completely
integrable Hamiltonian systems, see \cite[Theorem~7.4]{Sy}. In the
context of the Gross-Siebert program, similar tropical cycles have
been used by \cite{CBM13}. The balancing condition is a typical
feature in tropical geometry, see \cite{Mi05}. 

\begin{example} (Tropical cycles from the $1$-skeleton)
Let $\P^{[1]}$ denote the set of one-dimensional cells in $\P$.
For a vertex $v\in\P$ with $\omega\in\P^{[1]}$ an edge containing it, we denote by $d_{v,\omega}$ the primitive integral tangent vector to $\omega$ pointing from $v$ into $\omega$.
Then for any weight function
$a:\P^{[1]}\to\ZZ$ and any vertex $v\in\P$ we can check the analogue
of the balancing condition~\eqref{balancing} at $v$:
\[
\sum_{\omega\ni v} a(\omega) d_{v,\omega}=0.
\]
Assuming this balancing condition holds for every $v$ we can then
define a tropical $1$-cycle by taking the graph with edges
$\{\omega\in\P^{[1]}\,|\, a(\omega)\neq 0\}$ and the embedding into
$B \setminus\Delta$ a small perturbation of the $1$-skeleton to make
the resulting cycle disjoint from $\Delta$ and its intersection with
$\P^{[n-1]}$ discrete.  To define the section $\xi_e$ and the
orientation for the edge $e$, we choose a vertex $v$ of every edge
$\omega$. Now the section $\xi_e\in\Gamma(e,\Lambda)$ of the edge
$e$ of $\beta_\trop$ arising as a perturbation of
$\omega\in\P^{[1]}$ is defined by parallel transport of
$a(\omega)\cdot d_{v,\omega}$ and we orient $e$ by $d_{v,\omega}$. 
Note that $d_{v,\omega}$ is invariant under local monodromy around
$\Delta$, so local parallel transport is uniquely defined. Choosing
the other vertex of $\omega$ instead results in a double sign
change, namely in the orientation of $e$ as well as in the section
$\xi_e$ and so the choice of vertex $v$ is insignificant.

A special case of this example arises if $(B,\P,\varphi)$ is the
dual intersection complex of a degeneration with normal crossing
special fibre.  The one-skeleton at each vertex then looks like the
fan of projective space. As the primitive generators of the rays in
this fan are balanced, any non-trivial constant weight function
$w:\P^{[1]}\ra\ZZ$ yields a tropical $1$-cycle by the above
procedure. This way, one obtains a generator for $W_2/W_0$ for the
mirror dual Calabi-Yau of a degree $(n+1)$-hypersurface in $\PP^n$,
e.g. the mirror dual of the quintic threefold.
\end{example}

We associate to a tropical $1$-cycle $\beta_\trop$ an $n$-cycle
$\beta\in H_n(X_t,\ZZ)$ in the nearby fibres $X_t, t\neq 0$, see
\S\ref{section-tropical-to-homology}. The association $\beta_\trop
\leadsto \beta$ is canonical up to adding a multiple of the
vanishing $n$-cycle $\alpha$. An oriented basis $v_1,\ldots,v_n$ of
a stalk of $\Lambda$ gives a global $n$-form 
\[
\Omega= \dlog z^{v_1}\wedge \ldots\wedge \dlog z^{v_n}
\]
on $X_0^\dagger$ which extends canonically to $\shX$ as a section of
$\Omega^n_{\shX^\dagger/\Tbs^\dagger}$ by requiring that its
integral over the vanishing $n$-cycle is constant. Now the vanishing
$n$-cycle on $X_t$ is homologous in $\shX$ to the $n$-torus
$|z^{v_i}|={\rm const}$, $i=1,\ldots,n$, in $X_0$. Hence the
constant is computed to be
\[
\int_\alpha \Omega=(2\pi i)^n.
\]

The multi-valued piecewise affine function $\varphi$ is uniquely
determined by a set of positive integers $\kappa_\rho$ telling the
change of slope for each codimension one cell $\rho\in\P^{[n-1]}$.
This is defined as follows. Let $\sigma_+$, $\sigma_-\in\P^{[n]}$
be the two maximal cells containing $\rho$. Let
$d_\rho\in\check\Lambda_{\sigma_+}$ be the primitive normal to
$\rho$ that is non-negative on $\sigma_+$. In particular, the
tangent space to $\rho$ is $d_\rho^\perp$. We have
$\varphi|_{\sigma_\pm}$ is affine, say the linear part is given by
$m_+$ and $m_-$, respectively. Their difference needs to be a
multiple of $d_\rho$. Thus there exists $\kappa_\rho\in
\NN\setminus\{0\}$, called the \emph{kink of $\varphi$ at }$\rho$,
with
\begin{equation}
\label{kink}
m_+|_\rho-m_-|_\rho = \kappa_\rho d_\rho.
\end{equation}
Somewhat more generally with a view towards \cite{theta}, let
$\tilde\P^{[n-1]}$ denote the set of those codimension one cells of
the barycentric subdivision of $\P$ that lie in codimension one
cells of $\P$. In this case we admit a different $\kappa_{
\ul{\rho}}$ for each $\ul{\rho} \in\tilde\P^{[n-1]}$. Taking
$\Delta$ to be the union of the boundaries of elements of
$\tilde\P^{[n-1]}$ we may assume that $\beta$ meets any $\ul\rho\in
\tilde\P^{[n-1]}$ at most in its relative interior. The logic of
this notation is $\rho$ is the codimension one cell of $\P$
containing a $\ul \rho\in\tilde\P^{[n-1]}$. For the purpose of
\cite{affinecomplex}, the transition from $\P^{[n-1]}$ to
$\tilde\P^{[n-1]}$ is unnecessary as in this setup all $\kappa_{
\ul{\rho}}$ agree for a given $\rho$.

The main results are the following.
\begin{theorem} 
\label{period-computed}
Let $B$ be oriented and assume $\beta_\trop\cap\partial B=\emptyset$
and $\beta_\trop$ is compact. Then we have
\[
 h_\beta(t) = (-1)^\nu\prod_{p\in\beta_\trop\cap
\tilde\P^{[n-1]}}  s_p t^{\kappa_p\langle \xi_{e_p},d_p\rangle}.
\]
where 
\begin{itemize}
\item[$\nu$] denotes the sum of the valencies of all the vertices of $\beta_\trop$.
\item
[$\langle\cdot,\cdot\rangle$] is the pairing of tangent vectors
$\Lambda$ and co-tangent vectors $\check\Lambda$, 
\item
[$\kappa_p$] $\in\ZZ_{>0}$ is the kink of $\varphi$ at the
codimension one cell $\ul\rho\in\tilde\P^{[n-1]}$ containing $p$, 
\item
[$e_p$] is the edge of $\beta$ containing $p$,
\item
[$d_p$] is the primitive normal to $\rho$ that pairs positively with
an oriented tangent vector to $e_p$ at $p$ and
\item
[$s_p$] $\in\CC^\times$ is determined by $\xi_{e_p}$, the gluing
data $s=(s_{\tau_0\subset\tau_1})$ and the orientation of $e_p$ at
$p$, see \eqref{def-sp} and Definition~\ref{opengluingdata}.
\end{itemize}
Hence, up to an explicit constant factor and taking a power, $t$ is the
canonical coordinate of \cite{Mo93}.
\end{theorem}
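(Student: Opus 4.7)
The plan is to reduce the $n$-dimensional period $\int_\beta\Omega$ to a one-dimensional integral over the tropical cycle $\beta_\trop$ by fibre integration, and then to localise that integral at the crossing points $p\in\beta_\trop\cap\tilde\P^{[n-1]}$.

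First, I would exploit the structure of the cycle $\beta$ from \S\ref{section-tropical-to-homology}: over each edge $e\subset\beta_\trop$ with tangent direction $\xi_e$, the piece of $\beta$ sitting above $e$ is an $(n-1)$-torus bundle in the monomial directions transverse to $\xi_e$. Choosing an oriented basis $v_1,\dots,v_{n-1},v_n=\xi_e$ of a stalk of $\Lambda$, the fibrewise integration of $\Omega=\dlog z^{v_1}\wedge\cdots\wedge\dlog z^{v_n}$ against the $(n-1)$-torus produces the standard factor $(2\pi i)^{n-1}$, yielding
\[
\int_\beta\Omega \;=\; (2\pi i)^{n-1}\int_{\beta_\trop}\eta,
\]
where on each edge $e$ the residual one-form equals $\eta=\dlog z^{\xi_e}$.

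Second, I would analyse the contribution of a single crossing $p\in e\cap \ul\rho$. Near such a $p$, the canonical degeneration $\foX$ of \cite{affinecomplex} has two adjacent charts corresponding to the maximal cells $\sigma_{\pm}\supset\ul\rho$, glued across $\ul\rho$ by the slab functions prescribed by the log structure and the open gluing data. The kink relation \eqref{kink} controls the leading-order behaviour of this gluing: the monomial $z^{\xi_e}$ on the $\sigma_-$-side is identified with $s_p\cdot t^{\kappa_p\langle\xi_{e_p},d_p\rangle}\cdot z^{\xi_e}$ on the $\sigma_+$-side, where $s_p\in\CC^\times$ is the scalar extracted from the cocycle $(s_{\tau_0\subset\tau_1})$ via \eqref{def-sp} and Definition~\ref{opengluingdata}, and $d_p$ is the primitive normal chosen to pair positively with the oriented tangent to $e_p$ at $p$. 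Consequently any primitive of $\eta$ jumps by $\log s_p+\kappa_p\langle\xi_{e_p},d_p\rangle\log t$ across $p$.

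Third, I would assemble the global integral. At each vertex of $\beta_\trop$, the balancing condition \eqref{balancing} combined with the identity $\dlog z^{v+w}=\dlog z^v+\dlog z^w$ patches the local primitives of $\eta$ coming from the incident edges, while the orientation of the $(n-1)$-torus fibre has to be reversed once per edge-vertex incidence, producing the overall $(-1)^\nu$ factor. Summing the jumps from the previous step along $\beta_\trop$ therefore yields
\[
\int_{\beta_\trop}\eta \;\equiv\; -i\pi\nu \;-\;\sum_{p}\log s_p \;-\; (\log t)\sum_{p}\kappa_p\langle\xi_{e_p},d_p\rangle \pmod{2\pi i\,\ZZ}.
\]
Exponentiating through $h_\beta=\exp\bigl(-\int_\beta\Omega/(2\pi i)^{n-1}\bigr)=\exp\bigl(-\int_{\beta_\trop}\eta\bigr)$, using $\int_\alpha\Omega=(2\pi i)^n$ as already computed, then produces the asserted product formula.

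The main obstacle will be the second step: one has to unpack the explicit local model of $\foX$ around a codimension-one cell of $\tilde\P^{[n-1]}$ from \cite{affinecomplex}, recognise the monomial $z^{\xi_e}$ on either side of $\ul\rho$, and verify that the scalar and power of $t$ appearing in its transition agree precisely with $s_p\cdot t^{\kappa_p\langle\xi_{e_p},d_p\rangle}$ as dictated by the open gluing data and the kink of $\varphi$. A secondary difficulty is the bookkeeping needed for the $(-1)^\nu$: compatibly orienting the $(n-1)$-torus fibres across every edge of $\beta_\trop$ using the orientation of $B$, so that the sign changes at each vertex incidence combine into the global factor predicted by the theorem.
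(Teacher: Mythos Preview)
Your overall localisation picture is right in spirit, but there is a genuine gap that would make the argument fail as written: you have ignored the wall and slab functions. The transition across a codimension-one cell is not simply $z^{\xi_e}\mapsto s_p\,t^{\kappa_p\langle\xi_{e_p},d_p\rangle}z^{\xi_e}$; the actual chart change in the structure of \cite{affinecomplex} involves the slab function $f_\fob=a(1+\tilde f)$ with $\tilde f$ carrying higher-order terms in $t$, and $\beta_\trop$ may also cross \emph{proper walls} inside maximal cells, each with its own $f_\fop\equiv 1\bmod t$. A priori all of these contribute to your one-form $\eta$ and hence to the period. The paper handles this by inserting add-in chains $\Gamma_p$ and $\Gamma_{b'}$ at each wall and slab crossing and computing $\int_{\Gamma_p}\Omega$ directly; the result is that these integrals vanish to order $k$ \emph{precisely because of the normalization condition}~\eqref{ncond} on $\log f_\fop$. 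This is the crucial technical point (flagged already in item~(1) at the end of \S\ref{Sect: elliptic curve}), and without it your jump computation is only correct to lowest order in $t$.

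There is a second, smaller gap in your treatment of vertices. The cycle $\beta$ is not a torus bundle over $\beta_\trop$: over a vertex $v$ one must insert an $n$-chain $\Gamma_v$ in the full $n$-torus $\mu^{-1}(v)$ bounding the incoming $(n-1)$-tori, so the fibre-integration picture breaks down there. Your claim that balancing lets you ``patch the local primitives'' with a sign per incidence is too vague; the paper instead computes $\int_{\Gamma_v}\Omega\in(2\pi i)^n\cdot\tfrac12\ZZ$ directly (Lemma~\ref{lemmaav}) by reducing to trivalent vertices and then to a one- or two-dimensional problem, and the parity of this half-integer is what produces the $(-1)^\nu$. Your orientation-reversal heuristic does not obviously reproduce this.

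By contrast, the telescoping of the edge contributions and the identification of the slab jump with $\log(s_p\,t^{\kappa_p\langle\xi_{e_p},d_p\rangle})$ are essentially as in the paper (\S\ref{section-slab-integral}--\S\ref{section-chamber-integral}); it is the wall/normalization step and the vertex computation that you are missing.
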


\begin{proof}
The proof occupies \S\ref{section-compute-periods}.
\end{proof}

\begin{remark}[Higher dimensional base $\Tbs$]
It is straightforward to generalize Theorem~\ref{period-computed} to
the case where $\dim\Tbs>1$. The base monoid $\NN$ gets replaced by
a monoid $Q$ and $t^{\kappa_p}=z^{\kappa_p}$ gets replaced by
$z^{q_p}$ with $q_p\in Q$, see \cite[Appendix]{theta}. The adaption
of our proofs to this case is straightforward. Alternatively, one
can deduce the multi-parameter case from the one-parameter case
because a function is monomial if and only if its base change to any
monomially defined one-parameter family is monomial.
\end{remark}

\begin{remark}[Boundary and compactness]
If $\beta_\trop\cap\partial B\neq \emptyset$ then $\Omega$ acquires
a logarithmic pole on $\beta$, so the integral $\int_\beta\Omega$ is
not finite. The integral is also infinite if $\beta_\trop$ is
non-compact (necessarily $B$ is non-compact then as well).
\end{remark}

It remains to understand in which cases the cycles $\beta\in W_2$
obtained from tropical $1$-cycles $\beta_\trop$ actually generate
$W_2/W_0$. Let $C_1(B,i_*\Lambda)$ denote the group of tropical
$1$-cycles.
\begin{definition} 
\label{def-enough-t1c}
Let $(B,\P,\varphi)$ be a polarized tropical manifold. We say that
$(B,\P,\varphi)$ has \emph{enough tropical $1$-cycles} if the set 
$\{\beta\mid \beta_\trop\in C_1(B,i_*\Lambda)\}$ generates
$W_2/W_0$. \end{definition}

\begin{theorem} 
\label{enough-cycles}
\begin{enumerate}
\item
Let $C_1(B,i_*\Lambda)$ denote the group of tropical $1$-cycles.
The natural map
\[
C_1(B,i_*\Lambda)\lra H_1(B,\partial B;i_*\Lambda)
\]
associating to a tropical 1-cycles its homology class in sheaf
homology is surjective.
\item

If $B$ is oriented, we have a canonical isomorphism
\[
 H^{n-1}(B,i_*\Lambda)=H_1(B,\partial B;i_*\Lambda) 
\]
\end{enumerate}
\end{theorem}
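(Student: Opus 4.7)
The plan is to treat the two parts separately: part (2) is a direct application of Poincar\'e--Lefschetz duality, while part (1) requires representing an arbitrary sheaf-homology class by a graph of the very special form in Definition~\ref{def-trop-1-cycle}. The combinatorial content of part (1) is that the cellular cycle condition for a 1-chain with coefficients in $i_*\Lambda$ is precisely the balancing condition~\eqref{balancing}.

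For part (2), I would invoke sheaf-theoretic Poincar\'e--Lefschetz duality on the oriented topological $n$-manifold with boundary $(B,\partial B)$. The affine singularities along $\Delta$ are invisible to the underlying topological structure, so the standard duality applies to any sheaf of abelian groups $\shF$ on $B$ and gives a canonical isomorphism
\[
H^{n-k}(B;\shF)\iso H_k(B,\partial B;\shF).
\]
Taking $k=1$ and $\shF=i_*\Lambda$ produces the asserted identification. Orientability of $B$ is the only hypothesis needed; the local system $\Lambda$ is permitted to have arbitrary monodromy around $\Delta$.

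For part (1), I would fix a triangulation $K$ of $B$ that refines $\P$, contains $\Delta$ and $\partial B$ as subcomplexes, and is fine enough that the cellular complex $C_\bullet(K;i_*\Lambda)$ computes sheaf homology. A class in $H_1(B,\partial B;i_*\Lambda)$ is represented by a 1-chain $\gamma=\sum_e\xi_e\cdot e$ with $\xi_e\in\Gamma(e,i_*\Lambda)$, whose relative cycle condition is exactly $\sum_{v\in e}\eps_{e,v}\xi_e=0$ at every interior vertex $v$. Because $\codim_B\Delta\geq 2$, a small ambient isotopy supported near the $1$-skeleton of $K$ pushes every edge with $\xi_e\neq 0$ into $B\setminus\Delta$, so each $\xi_e$ becomes a genuine section of $\Lambda|_e$. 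A further generic perturbation places all vertices off $\P^{[n-1]}$ and makes every edge meet $\P^{[n-1]}$ transversely in interior points of codimension-one cells. Finally I would discard all edges carrying $\xi_e=0$; what remains is a tropical $1$-cycle lifting~$[\gamma]$.

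The main obstacle will be bookkeeping at vertices after discarding zero-weight edges: one must check that no interior vertex ends up univalent with a nonzero label. This is, however, forced by balancing, since a single nonzero summand of $\sum_{v\in e}\eps_{e,v}\xi_e$ cannot vanish; any interior univalent vertex must therefore have $\xi_e=0$ on its unique incident edge and so is eliminated together with that edge. A related point is that the perturbation argument must preserve the homology class throughout, which I would handle by performing all moves as ambient isotopies relative to $\partial B$ and keeping $\Delta$ and $\P^{[n-1]}$ in general position.
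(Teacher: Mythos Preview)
Your approach to part~(1) is essentially the paper's (Theorem~\ref{shape-of-cycles}): represent the class by a simplicial $1$-chain, identify the relative cycle condition with the balancing condition~\eqref{balancing}, then perturb off $\Delta$ and into general position with respect to $\P^{[n-1]}$. The paper uses the barycentric subdivision $\P^{\bary}$ specifically, after first proving (Theorem~\ref{simplicial=singular}) that simplicial and singular sheaf homology agree for constructible coefficients; you assume this comparison implicitly when you say ``the cellular complex $C_\bullet(K;i_*\Lambda)$ computes sheaf homology''. Your treatment of univalent vertices after discarding zero-weight edges is a detail the paper leaves to the reader.

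For part~(2), however, your appeal to Poincar\'e--Lefschetz duality is too glib. The isomorphism $H^{n-k}(B;\shF)\cong H_k(B,\partial B;\shF)$ does \emph{not} hold for an arbitrary sheaf of abelian groups (test it on a skyscraper sheaf), so ``standard duality applies to any sheaf'' is false as stated. What makes the statement true here is precisely the constructibility of $i_*\Lambda$ with respect to a polyhedral decomposition of the manifold $B$, and the paper devotes \S\ref{sec-hocoho-compare} to establishing exactly this isomorphism (Theorem~\ref{cor-iso-ho-coho}) via a \v{C}ech cover by open stars of maximal cells, a filtration by codimension, and a spectral sequence argument---remarking explicitly that they could not find the result for constructible sheaves in the literature. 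Your black-box invocation is thus pointing at the very thing the paper has to prove. To repair your argument you would need either a precise reference for Poincar\'e--Lefschetz duality with constructible (not merely locally constant) coefficients, or an argument along the lines of the appendix; Verdier duality on a manifold together with an identification of Bredon-style sheaf homology with hypercohomology of the dualizing complex would do, but that is real work, not a one-line citation.
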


\begin{proof}
(1) is Theorem~\ref{shape-of-cycles} and (2) is
Theorem~\ref{main-affine-result},(1).
\end{proof}

Via Hodge theory of toric degenerations \cite{logmirror2,Ru10}, we
will deduce as Corollary~\ref{genW2W0} the following result from
Theorem~\ref{enough-cycles} in \S\ref{sec-W2W0}.

\begin{theorem}
If $(B,\P,\varphi)$ is simple then it has enough tropical $1$-cycles.
\end{theorem}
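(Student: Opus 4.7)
The plan is to combine Theorem~\ref{enough-cycles} with the Hodge-theoretic description of $W_2/W_0$ available for simple polarized tropical manifolds. Under simplicity, the logarithmic Hodge theory developed in \cite{logmirror2,Ru10} identifies each graded piece of the monodromy weight filtration on the limiting mixed Hodge structure with an affine cohomology group on $B$. I would first extract from that theory a canonical isomorphism
\[
\Gr^W_2 H^n(X_t,\CC) \;\cong\; H^{n-1}\bigl(B,\,i_*\Lambda\otimes\CC\bigr).
\]
Using Poincar\'e duality on the smooth nearby fibre and the self-duality of the monodromy weight filtration in the Hodge--Tate case, this transposes to an isomorphism
\[
W_2 H_n(X_t,\CC)/W_0 \;\cong\; H^{n-1}\bigl(B,\,i_*\Lambda\otimes\CC\bigr).
\]

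Next I would feed in Theorem~\ref{enough-cycles}(2) to identify the right-hand side with $H_1(B,\partial B;\,i_*\Lambda\otimes\CC)$, and then Theorem~\ref{enough-cycles}(1) to exhibit this group as the image of $C_1(B,i_*\Lambda)\otimes\CC$. To conclude, it suffices to check the compatibility: for $\beta_\trop\in C_1(B,i_*\Lambda)$, the class of the associated $n$-cycle $\beta$ from \S\ref{section-tropical-to-homology} in $W_2/W_0$ must correspond, via the above chain of isomorphisms, to the sheaf homology class of $\beta_\trop$ itself.

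This compatibility is the main obstacle. Geometrically, $\beta$ is constructed as an $(n-1)$-torus bundle over a push-off of $\beta_\trop$, with the torus direction at an edge $e$ cut out by the annihilator of $\xi_e\in\Gamma(e,\Lambda)$; pushing $\beta$ forward under the SYZ-type collapse $X_t\rightsquigarrow B$ implicit in the log-Hodge identification should recover precisely $\beta_\trop$ decorated by $\xi_e$. Making this rigorous requires unwinding the description of $\Gr^W_2$ as a term in the Koszul-type spectral sequence of sheaves on $B$ used in \cite{logmirror2,Ru10}, comparing it cell-by-cell with the \v{C}ech representative of $\beta_\trop$, and tracking orientations and signs through the Poincar\'e duality identifications. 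Once this matching is in place, the surjectivity of Theorem~\ref{enough-cycles}(1) propagates to surjectivity of the geometric cycle map $C_1(B,i_*\Lambda)\to W_2/W_0$, which is exactly the statement that $(B,\P,\varphi)$ has enough tropical $1$-cycles.
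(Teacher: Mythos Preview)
Your approach is essentially the paper's: the proof given there (as Corollary~\ref{genW2W0}) is the single line ``$W_1=W_0$; combine Theorem~\ref{thm-identifyW2W1} with Theorem~\ref{enough-cycles}'', where Theorem~\ref{thm-identifyW2W1} supplies exactly the identification $W_2/W_1\cong H^{n-1}(B,i_*\Lambda\otimes\CC)$ you invoke, via the description of $N=\log T$ as cup product with the radiance obstruction and the Lefschetz isomorphism on the mirror.

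The compatibility step you single out as the ``main obstacle'' is a fair point: the paper's one-line proof also does not spell out why the geometric map $\beta_\trop\mapsto[\beta]\in W_2/W_0$ agrees, under the Hodge-theoretic isomorphism, with the sheaf-homology class of $\beta_\trop$. This is taken as implicit in the framework of \cite{logmirror2,Ru10}. So your outline matches the paper both in strategy and in what is left unsaid; you have simply been more explicit about the gap.
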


\begin{remark}[Beyond simplicity]
By \cite[Example~1.16]{Ru10} it is known that
\eqref{injection-affine-to-log} might not be an isomorphism beyond
simplicity. For example for the quartic degeneration in $\PP^3$ to
the union of coordinate planes, the left-hand side of
\eqref{injection-affine-to-log} has rank $2$ whereas the right-hand
side has rank $20$. To turn \eqref{injection-affine-to-log} into an
isomorphism, one needs to degenerate further, see
\S\ref{subsec-simplicity}. Simplicity is closely related to making
the tropical variety of the quartic family smooth in the sense of
tropical geometry \cite{Mi05}.
\end{remark}

\begin{acknowledgement} 
We would like to thank Mark Gross, Duco van Straten and Eric Zaslow
for useful discussions.
\end{acknowledgement}

\begin{convention} 
We work in the complex analytic category. Every occurrence of $\Spec
A$ for a $\CC$-algebra $A$ is implicitly to be understood as the
analytification $(\Spec A)_\an$ of the $\CC$-scheme $\Spec A$.
\end{convention}

%===========================================================
%
%		Key example: the elliptic curve
%
%===========================================================

\section{Key example: the elliptic curve}
\label{Sect: elliptic curve}
As an illustration, we compute the periods for the nodal
degeneration of an elliptic curve. The technique we use for the
computation of periods of higher-dimensional Calabi-Yau manifolds is
a generalization of how we do it here. We denote the multiplicative
group of complex numbers by $\CC^*$ and consider the Tate family of
elliptic curves which is the (multiplicative) group quotient
\[
E_t = \CC^*/t^{k\ZZ}
\]
for $0<|t|<1$ and $k\in\ZZ_{>0}$. If $z$ denotes the standard
coordinate on $\CC$, we define $\Omega=\dlog z=\frac{dz}z$. This
$1$-form is invariant under $z\mapsto \lambda z$ for
$\lambda\in\CC^*$ and hence it descends to the Tate family. We have
two natural cycles coming from the description. Let
$\alpha(s)=e^{is}$ be a counterclockwise loop around the missing
origin in $\CC^*$, we find
\begin{equation}
\label{Eq: alpha-int Tate curve}
\int_\alpha\Omega = \int_{0}^{2\theta} e^{-i\theta}i
e^{i\theta}d\theta=2\pi i
\end{equation}
is independent of $t$. In the completed family below $\alpha$ is
going to be a vanishing cycle and \eqref{Eq: alpha-int Tate curve}
shows $\Omega$ is the canonical holomorphic volume form.
\begin{figure}
\resizebox{0.4\textwidth}{!}{
\includegraphics{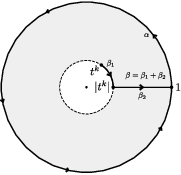}
}
\caption{Fundamental region of $\CC^*/t^{k\ZZ}$}
\label{Fig: fund-region}
\end{figure}
The other cycle $\beta$ is depicted in Figure~\ref{Fig:
fund-region}. We write $t=r e^{i\psi}$. Splitting $\beta$ in an
angular part $\beta_1$ and a radial part $\beta_2$, we
compute\footnote{Note that $\beta$ can not be defined consistently
in the whole family; the various choices differ homologically by
multiples of $\alpha$ and lead to different branches of $\log t^k$.}
\begin{equation}
\label{intbeta}
\int_\beta\Omega = \int_{\beta_1}\Omega + \int_{\beta_2}\Omega =
\int^{0}_{k\psi} e^{-i\theta}i e^{i\theta}d\theta + \int_{r^k}^1
r^{-1}dr= -ik\psi -k\ln r = -\log t^k.
\end{equation}
The canonical coordinate is given as 
\[
\exp\left(-2\pi i\frac{\int_\beta\Omega}{\int_\alpha\Omega}\right) = t^k.
\]
For the purpose of generalizing this computation to higher
dimensional Calabi-Yau manifolds that are not necessarily complex
tori, we next recompute $\int_\beta\Omega$ somewhat differently. In
terms of the Tate family, this means that we focus our attention on
the (yet missing) central fibre. The family can be completed over
the origin by an $I_1^k$ type nodal rational curve as follows. (An
$I_k$ fibre is also possible, cf.~\cite[\S VII]{DR73}.) Consider the
action of the group $\mu_k$ of $k$th roots of unity on $\AA^2$ given
by $\zeta.(x,y)=(\zeta x,\zeta^{-1}y)$. Let $U$ be the open subspace
of the quotient
\[
\AA^2/\mu_k=\Spec \CC[x^k,y^k,xy] = \Spec \CC[z,w,t]/(zw-t^k).
\] 
defined by $|t|<1$. Set $V=(\AA^1\setminus\{0\})
\times\{t\in\CC\,|\,|t|<1\}\subset \AA^1\setminus\{\}\times\AA^1$.
Define $\shX$ to be the quotient in the analytic category defined by
the \'etale equivalence relation (pushout)
\[
\vcenter{\vbox{
%\resizebox{0.9\textwidth}{!}{
\xymatrix@C=30pt
{ 
V\ar@<.6ex>^{(u,t)\mapsto(u,u^{-1}t^k,t)}[rr]
\ar@<-.6ex>_{(u,t)\mapsto (ut^k,u^{-1},t)}[rr] && U \ar@{.>}[r]&
\shX}
%}
}}
\]
The map $f:U\ra\AA^1,\ (z,w,t)\mapsto t$ descends to $\shX$. Define
$X_t=f^{-1}(t)$.  For $t\neq 0$ fixed, we find $X_t$ is the
hypersurface of $U$ given by $z=t^kw^{-1}$ modulo the equivalence
relation $w^{-1}=z$. So indeed $X_t=E_t$ and $\shX_\DD=f^{-1}(\DD)$
is a completion of the Tate family over the origin. By abuse of
notation, we will set $\shX=\shX_\DD$ now.

We next turn to the form to integrate. For this we choose a
generator $\Omega$ of the trivial bundle $\Omega^1_{\shX/\DD}(\log
X_0)\cong \shO_\shX$.  There is a canonical generator (up to sign)
as before. Namely on $X_0$, take $\Omega|_{X_0}=
\frac{dz}{z}=-\frac{dw}{w}$ and lift this by setting
$\Omega=\frac{dz}{z}$, now on $\shX$.  The restriction of $\Omega$
to $X_t$ for $t\neq 0$ coincides with the $\Omega$ considered above
when we computed the periods.  The cycle $\alpha$ is now identified
with the vanishing $1$-cycle of the degeneration of $X_t$ as $t\ra
0$.  There is only one such integral also when we go to dimension
$n$ where $\alpha$ is then homeomorphic to $(S^1)^n$. More
interesting is the period integral over $\beta$ of which there might
be several in higher dimensions. What we are going to do is
construct first a tropical version $\beta_\trop$ of $\beta$ in the
intersection complex of $X_0$. For the present degeneration of
elliptic curves, degree one polarized, the intersection complex of
$X_0$ is $B = \RR /\ZZ$ (the moment polytope of $(\PP^1,\O(1))$
glued at its endpoints). We take $\beta_\trop=B$. Consider the
moment map
\[
\PP^1 \lra [0,1],\quad (z:w)\longmapsto \frac{|z|^2}{|z|^2+|w|^2}
\]
Identifying endpoints in source and target respectively gives a
continuous map $\pi:X_0\ra B$ sending the node to $\{0\}$. The
fibres away from $\{0\}$ are circles. Let $\beta_0\subset X_0$ be
the lift of $\beta_{\trop}$ to $X_0$, i.e. a section of $\pi$ that
maps to the non-negative real locus $\{(s:t)\mid s,t\in\RR_{\ge
0}\}\subset\PP^1$. We want to lift $\beta_0$ further to the nearby
fibres $X_t$ under a retraction map $r_t:X_t\subset\shX\ra X_0$ to a
cycle $\beta$ (in our current example $\beta$ is going to be
homeomorphic to $\beta_0$). Restricting $\pi\circ r_t$ to $\beta$, 
defines a projection $\pi_\beta:\beta\ra\beta_\trop$ (here a
homeomorphism). We then compute the function $g(t)=\int_\beta\Omega$
on $\DD$ by patching $\beta$ via various open charts
$\pi_\beta^{-1}(W)$, $W\in\shW$, with $W\subset \beta_\trop$ such
that $g(t)$ decomposes as a sum of holomorphic functions
\[
g(t)=\sum_{W\in\shW} g_W(t),\quad\qquad
g_W(t)=\int_{\pi_\beta^{-1}(W)\cap \beta}\Omega.
\]
Since $f$ is smooth along $X_0$ away from the node, there exist
$0<\eps,\eps'<1$ so that for 
\[
V_1=\{(z,w)\in\PP^1\mid \eps<\frac{z}w<\eps'{}^{-1}\} =
\pi^{-1}(W_1),\qquad
W_1=\left(\frac{\eps^2}{\eps^2+1},\frac1{1+(\eps')^2}\right) 
\] 
and a smaller disk $\DD'\subset\DD$ we find an embedding of
$U_1:=V_1\times\DD'$ in $\shX$ such that $f|_{V_1\times\DD'}:
V_1\times\DD'\lra \DD'$ is the second projection and we may assume
(by modifying the embedding if necessary) that the retraction is the
first projection $r:V_1\times\DD'\ra V_1$. Let
$\hat\eps',\hat\eps'{}^{-1}\in \RR_{>0}$ denote the two points of
intersection of $\beta_0$ with the boundary of $V_1$. We have that
\[
g_1(t) =\int_{\beta\cap (\pi\circ r_t)^{-1}(W_1)}\Omega =
\int_{\hat\eps}^{\hat\eps'{}^{-1}} \frac{du}u = -\log
{\hat\eps'}-\log{\hat\eps}
\]
does not depend on $t$. We set 
\[
U_2 = \{(z,w,t)\in U\mid |z|<\eps, |w|<\eps'\}
\]
\begin{figure}
\resizebox{0.6\textwidth}{!}{
\includegraphics{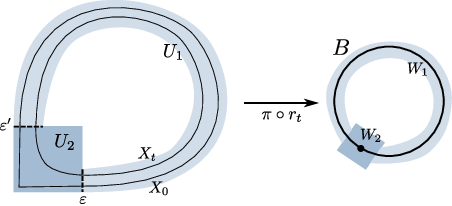}}
\caption{The covering of the degeneration of the Tate curve that is
used to decompose the period integral, pand the rojection to the
intersection complex $B$}
\label{Fig: cover-ell-curve}
\end{figure}
Let $r:U_2\ra V_2:=\tilde W_2\cap X_0$ be a retraction that
coincides at the ends $|z|=\eps$ and $|w|=\eps'$ with the retraction
induced from $U_1\ra V_1$. Let $r_t$ be its restriction to $X_t\cap
U_2$. We compute
\[
g_2(t) =\int_{r_t^{-1}(\beta_0)}\Omega =
\int^{\hat\eps}_{{\hat\eps'{}^{-1}}t^k} \frac{dz}z = \log
{\hat\eps'}+\log{\hat\eps}-\log t^k
\]
where we used $z=w^{-1} t^k$.
We conclude
\[
g(t) = g_1(t)+g_2(t) = -\log t^k
\]
which coincides with \eqref{intbeta}. The patching method is
certainly unnecessarily complicated for the Tate curve but it
illustrates the approach that will generalize to higher dimensions.

While the Tate curve demonstrates some key features of our period
calculation already, there are the following aspects that we
additionally need to consider in higher dimensions.
\begin{enumerate}
\item
The local model at a singular point of $X_0$ met by $\beta_0$ more
generally takes the shape
\[
zw=ft^\kappa .
\]
We show that $f$ basically can be assumed to equal $1$ as it does
not contribute to $\int_\beta\Omega$. This is remarkable because the
so-called slab functions $f$ are known to carry enumerative
information \cite{GHK},\cite{ICM},\cite{lau}.
As our proof shows, it is precisely the normalization condition that
determines the relevant enumerative corrections necessary to make
the mirror map trivial. We certainly expect $f$ to enter the
calculation of periods of higher weight.
\item
While the tropical cycle in the dual intersection complex
$\beta_{\trop}\subset B$ remains one-dimensional, its lift to $X_t$
will be $n$-dimensional for $n=\dim X_t$.  The projection
$\pi_\beta=(\pi\circ r)|_\beta:\beta\ra \beta_{\trop}$ will
generically be a $T^{n-1}=(S^1)^{n-1}$ fibration.  In order to pick
$T^{n-1}$ among various choices in the fibres of $\pi\circ r$, we
decorate $\beta_\trop$ with a section $\xi$ of the local system of
integral flat tangent vectors $\Lambda$ on the smooth part of $B$,
see Definition~\ref{def-trop-1-cycle}.
\item
The tropical $1$-cycle $\beta_{\trop}$ will typically have tropical
features, i.e. it is not necessarily just an $S^1$ as above but may
bifurcate satisfying a balancing condition \eqref{balancing}.
\item
Some effort is necessary to show that the cycles coming from
tropical $1$-cycles generate all cycles in the graded piece of the
monodromy weight filtration responsible for the flat coordinates,
see Corollary~\ref{genW2W0}.  Besides what is said there, we prove a
general homology-cohomology comparison theorem for (co-)homology
with coefficients in a constructible sheaf in
\S\ref{sec-hocoho-compare} as well as a comparison of simplicial and
usual sheaf homology in \S\ref{general-constructible-sheaves}.
Theorem~\ref{enough-cycles} is essentially a corollary of this.
\end{enumerate}

%===========================================================
%
%		Finite order analytic extensions and general setup
%
%===========================================================

\section{Analytic extensions and general setup}

\subsection{Analytic extensions in the compact case}
\label{Subsect: Analytic extension}
The canonical smoothing obtained from \cite{affinecomplex} is a
formal family $X$ over $\Spec\CC\lfor t\rfor$. Since the periods $g_\beta
=\int_\beta\Omega$ have essential singularities at $0$, a word is
due on how we compute these using the finite order thickenings $X_k$
of $X_0$. If $X_0$ is non-compact, we need to make the assumption
that for any $k$ there is an analytic space $\shX$ with a
holomorphic map to a disc $\Tbs$ such that its base change to
$\Spec\CC[t]/t^{k+1}$ is isomorphic to $X_k$. We call such an
$\shX\ra T$ an \emph{analytic extension} of $X_k$. If $X_0$ is
compact, an analytic extension of $X_k$ is obtained from the
following result.
\begin{theorem} (\cite[Th\'eor\`eme principal, p.598]{Do74};
\cite[Hauptsatz, p.140]{Gr74})
\label{Thm: versal}
Let $X_0$ be a compact complex-analytic space. Then there exists a
proper and flat map $\pi:\tilde\shX\to S$ of complex-analytic spaces
and a point $O\in S$ together with an isomorphism $\pi^{-1}(O)\simeq
X_0$ such that $\pi$ is versal at $O$ in the category of complex
analytic spaces.
\end{theorem}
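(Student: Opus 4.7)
The plan is to construct the versal analytic deformation in two essentially separate stages: first produce a formal versal deformation by algebraic deformation theory, then upgrade it to a genuine analytic family by a convergence argument. This mirrors the Douady/Grauert strategies but organizes them as two conceptually distinct tasks.

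\textbf{Stage 1 (Formal versal deformation).} I would first consider the deformation functor $\mathrm{Def}_{X_0}$ on local Artinian $\CC$-algebras, assigning to $A$ the isomorphism classes of flat proper families $\tilde\shX_A\to\Spec A$ with identified central fibre $X_0$. One checks Schlessinger's conditions (H1)--(H3): the key input is that for a compact analytic space $X_0$, the tangent space $T^1_{X_0}=\Ext^1(L_{X_0/\CC},\O_{X_0})$ and the obstruction module $T^2_{X_0}$ are finite-dimensional, which follows from properness together with coherence of the cotangent complex. Schlessinger then yields a formal versal deformation $\hat\pi:\hat\shX\to\Spf R$ with $R$ a complete local $\CC$-algebra that can be presented as $\CC\lfor t_1,\ldots,t_N\rfor/(f_1,\ldots,f_M)$, where the $f_i$ are obstruction power series produced inductively by the standard $T^1/T^2$ calculus.

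\textbf{Stage 2 (Analytic convergence).} The main obstacle, and what distinguishes the analytic statement from a purely formal one, is to exhibit a genuine germ of analytic space $(S,O)$ and a proper flat analytic map $\pi:\tilde\shX\to S$ whose formal completion at $O$ recovers $\hat\pi$. The Kuranishi approach, applicable when $X_0$ is smooth, fixes an underlying differentiable manifold and solves the Maurer--Cartan equation $\bar\partial\varphi+\tfrac12[\varphi,\varphi]=0$ for a $T^{1,0}_{X_0}$-valued $(0,1)$-form $\varphi$ by a Newton-type iteration, using the Green's operator of the $\bar\partial$-Laplacian to invert $\bar\partial$ on $\im\,\bar\partial^*$ and exploiting elliptic estimates to prove convergence of the resulting series in a suitable H\"older norm on a small ball of $H^1(X_0,T_{X_0})$. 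For general singular compact $X_0$, I would follow Douady and work in the Banach-analytic category: realize candidate deformations as points of an infinite-dimensional Banach analytic space of ``almost-complex'' structures (or of coherent quotients on a fixed embedding, combined with representability of $\Hom$ and $\Isom$ functors between compact analytic spaces), then apply a Banach analytic implicit function theorem / Kuranishi cut-down to carve out a finite-dimensional analytic germ $(S,O)$ from the zero locus of an obstruction map $H^1\to H^2$.

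\textbf{Stage 3 (Versality).} With $\pi$ in hand, versality at $O$ is checked infinitesimally: given any deformation $\shX'\to(S',O')$ over a germ, the comparison of tangent maps combined with the universal property of Stage~1 produces a formal morphism $\hat S'\to\hat S$ inducing the pullback. Analyticity of this morphism follows from the convergent construction in Stage~2, together with Artin-type approximation which is automatic here because everything was produced by convergent series rather than by formal solutions.

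\textbf{Main obstacle.} Stage~1 is essentially routine once the standard finiteness statements are granted. The genuine difficulty is Stage~2: controlling the growth of the coefficients of the obstruction series and of the versal family itself. In the smooth case this is subsumed by Kuranishi's elliptic estimates; in the general singular case one must either develop a harmonic theory on singular spaces or pass, following Douady, through Banach analytic geometry and an infinite-dimensional implicit function theorem. Any clean proof hinges on having at hand a suitable inverse function theorem in a topological vector space category flexible enough to include singular $X_0$, and this is the technical heart of both the Douady and the Grauert approaches.
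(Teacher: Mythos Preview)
The paper does not prove this theorem at all: it is quoted verbatim as a known result of Douady and Grauert, with references to \cite{Do74} and \cite{Gr74}, and is used only as a black box to guarantee the existence of an analytic extension in the compact case. There is therefore nothing in the paper to compare your proposal against.

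That said, your outline is a reasonable high-level summary of the strategy actually pursued in those references. A few remarks. Your Stage~1 is accurate in spirit, though both Douady and Grauert work analytically from the outset rather than first producing a formal object and then separately proving convergence; the two stages are intertwined in practice. Your Stage~2 correctly identifies the crux: for singular $X_0$ there is no straightforward Kuranishi-type elliptic theory, and Douady's route through Banach analytic geometry (privileged neighbourhoods, the Douady space, anaflat maps) is substantially more involved than your sketch suggests. Grauert's approach is rather different again, relying on his division and extension theorems for coherent sheaves together with a direct power-series construction whose convergence is controlled by delicate estimates. Your Stage~3 glosses over a real issue: passing from formal versality to analytic versality is not automatic and in the singular case does require an Artin-type approximation or an argument internal to the Banach-analytic construction. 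As a proof this would need a great deal of fleshing out; as an orientation to the literature it is broadly on target.
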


Indeed, by Theorem~\ref{Thm: versal} the formal family $\limdir X_k$
is obtained by pull-back of the versal deformation $\tilde\shX\to S$
of $X_0$ by a formal arc in $S$. Such a formal arc can be
approximated to arbitrarily high order by a map from a holomorphic
disc $\Tbs$ to $S$, and $\shX\to \Tbs$ is then defined by pull-back
of the versal family.

By the definition of $\Omega$, $\int_\alpha\Omega$ is constant on $\Tbs$. 
Furthermore, 
\[
h_\beta = \exp\left(-2\pi i \frac{\int_\beta \Omega}{\int_\alpha
\Omega}\right)
\] 
is going to be a holomorphic function on $\Tbs$, so $h_\beta$ is
determined by its power series expansion at $0$. The Taylor
series of this function up to order $k$ is determined by $X_k$ and
hence does not depend on the choice of $\shX$.  This is true for any
$k$, so we obtain in this way the entire Taylor series of $h_\beta$
at $0$ independent of the choices of $\shX$. We will see that
this is the Taylor series of a holomorphic function.

Furthermore, for each $k$ and each analytic extension $\shX$ of
$X_k$, we will consider a collection $\shU$ of pairwise disjoint
open sets in $\shX$ whose closures cover $\shX$ with zero measure
boundary. We then decompose
\[
h_\beta=\prod_{U\in \shU} h_{U}
\] 
where $h_{U}=\exp \big(-2\pi i \frac{\int_{\beta\cap U}
\Omega}{\int_\alpha \Omega}\big)$. We will choose the open sets such
that $h_U$ is holomorphic for each $U\in\shU$. Let $U_k$ denote the
base change of $U$ to $\Spec\CC[t]/t^{k+1}$. Also, the $k$th
$t$-order cut-off $h^k_\beta$ of $h_\beta$ decomposes
$h^k_\beta=\prod_{U} h^k_{U}$  in the $k$th order cut-offs $h^k_{U}$
of the $h_U$.  Hence we can compute each $h^k_\beta$ from an open
cover like $\shU$.

We next remind ourselves of the relevant notions developed in
\cite{logmirror1,affinecomplex}.

%===========================================================

\subsection{Toric degenerations and log CY spaces} 
The full definition of a \emph{toric degeneration} can be found in
\cite[Definition 1.8]{affinecomplex}.  Most importantly, it is a
flat morphism $f:\shX\ra\Tbs$ with the following properties:
\begin{enumerate}
\item
$\shX$ is normal,
\item
$\Tbs=\Spec R$ for $R$ a discrete valuation $\CC\lfor t\rfor$-algebra,
\item
the normalization $\tilde X_0$ of the central fibre $X_0$ is a
union of toric varieties glued torically along boundary strata such
that
\item
away from a locus $\shZ\subset\shX$ of relative codimension two,
the triple $(\shX,X_0,f)$ is locally given by $(U,V,z^\rho)$ with
$U$ an affine toric variety with reduced toric divisor $V$ cut out
by a monomial $z^\rho$,
\item
$\shZ$ is required not to contain any toric strata of $X_0$, \item
the normalization $\tilde X_0\ra X_0$ is required to be $2:1$ on the
union of the toric divisors of $X_0$ except for a divisor $\tilde
D\subset \tilde X_0$ where it may be generically $1:1$,
\item
denoting by $D$ the image of $\tilde D$ in $X_0$, the local model
$(U,V,z^\rho)$ at a point of $D\setminus\shZ$ can be chosen so that
$D$ is defined by $(z^{\rho_D},z^\rho)$ for $z^{\rho_D}$ another
monomial,
\item
the components of $X_0$ are algebraically convex, i.e. they admit a
proper map to an affine variety.
\end{enumerate}
One similarly defines a \emph{formal toric degeneration} as a family
over $\Spf\CC\lfor t\rfor$. A \emph{polarization} of a toric degeneration
is a fibre-wise ample line bundle. At a generic point $\eta_\tau$ of
a stratum $X_\tau$ of $X_0$, let $P_\tau$ denote the toric monoid
such that 
\begin{equation}
\label{UPtau}
U=\Spec\CC[P_\tau]
\end{equation}
for $(U,V,z^\rho)$ the local model at $\eta_\tau$ (which exists
because $\shZ$ does not contain $\eta_\tau$ by (5)). One finds that
$P_\tau$ is unique if one requires $\rho$ (respectively
$\rho+\rho_D$ at a point in $D$) to be contained in its relative
interior of $P_\tau$ which we assume from now on. Even though we do
not use any log geometry in this paper, we should mention that the
data of the local models $(U,V,z^\rho)$ can be elegantly encoded in
a \emph{log structure} on $X_0$. This is a sheaf of monoids
$\shM_{X_0}$ on $X_0$ together with a map of monoids
$\alpha:\shM_{X_0}\ra\shO_{X_0}$ using the multiplication on
$\shO_{X_0}$.  It is required that the structure map $\alpha$
induces an isomorphism
$\alpha^{-1}(\O_{X_0}^\times)\ra\shO_{X_0}^\times$. The way in which
$\shM_{X_0}$ encodes the local models is then
\begin{equation}
\label{MPtau}
\shM_{{X_0},\eta_\tau} / \alpha^{-1}(\O_{{X_0},\eta_\tau}^\times)
\oplus\ZZ^{\dim\tau} \cong P_\tau
\end{equation}
at the generic point $\eta_\tau$ of the stratum $X_\tau$ not
contained in $D$ and there is a similar relation on $D$. The
isomorphism \eqref{MPtau} is not canonical unless $\dim\tau=0$. Also
the monomial $z^\rho$ is encoded in the log structure as it is part
of the data of the log morphism from $X_0$ to the standard log
point. One defines a \emph{toric log CY-pair} to be a space $X_0$
with log structure $\shM_{X_0}$ satisfying a list of criteria that
is induced by the list above on the central fibre $X_0$, see
\cite[Definition 1.6]{affinecomplex}.

%===========================================================

\subsection{Intersection complex}
\label{subsec-intcomplex}
We recall \cite[\S4.2]{logmirror1}. Let $X_0$ denote the central
fibre of a polarized toric degeneration (in fact a pre-polarization
suffices, see \cite[Ex. 1.13]{affinecomplex}). By affine convexity
and the polarization, each irreducible component of $X_0$ is a toric
variety $X_\sigma$ given by a lattice polyhedron $\sigma$. We glue
two maximal polyhedra $\sigma_1,\sigma_2$ along a facet $\tau$ if
$\tau$ corresponds to a divisor in the intersection of
$X_{\sigma_1}$ and $X_{\sigma_2}$. The resulting space $B$ of all
such gluings is a topological manifold. Let $\P$ denote the set of
polyhedra and their faces modulo identifications by gluing. To each
cell $\tau\in\P$ corresponds a stratum $X_\tau$ of $X_0$ and this
association is compatible with inclusions and dimensions
($\dim\tau=\dim X_\tau$). We will denote by $\P^{[k]}$ the subset of
$k$-dimensional faces and by abuse of notation sometimes also their
union in $B$. Since $\shZ$ does not contain any toric strata,  at
the generic point of a stratum $X_\tau$ of $X_0$ there is a toric
local model $(U,V,z^\rho)$ and $U=\Spec\CC[P_\tau]$ with $\rho\in
P_\tau$. The monoid $P_\tau$ embeds in its associated group
$P_\tau^\gp\cong \ZZ^{n+1}$. Let $P_{\tau,\RR}$ denote the convex
hull of $P_\tau$ in $P_{\tau,\RR}^\gp=P_\tau^\gp\otimes_\ZZ\RR$. 
Let now $\tau=v$ be a vertex. If $X_{\sigma_1},\ldots,X_{\sigma_r}$
are the $n$-dimensional strata containing the point $X_v$ then
$\sigma_1,\ldots,\sigma_r$ correspond to facets of $P_{v,\RR}$.
The composition of the embedding of the facets with the projection 
\begin{equation} \label{projectPv}
P_{v,\RR}\lra P^\gp_{v,\RR}/\RR\rho \cong \RR^n
\end{equation}
provides a chart of the topological manifold $B$ in a neighbourhood
of $v$. Together with the relative interiors of the maximal cells of
$\P$, the charts provide an integral affine structure on $B$ away
from a codimension two locus $\Delta$. Thus there is an atlas for
$B\setminus\Delta$ with transition functions in
$\GL_n(\ZZ)\ltimes\ZZ^n$. The singular locus $\Delta$ can be chosen
to be contained in the union of those simplices in the barycentric
subdivision of $\P$ that do not contain a vertex or barycenter of a
maximal cell, see \cite[Remark 1.49]{logmirror1}.\footnote{The
precise choice of $\Delta$ is irrelevant for the present paper
because all our computations are localized near $\beta_\trop$ which
is chosen disjoint from $\Delta$. For example, in \cite{theta}
$\Delta$ is enlargeded to contain all codimension two cells of the
barycentric subdivsion of $\P$ that are not intersecting the
interiors of maximal cells.}
The pair $(B,\P)$
is called the \emph{intersection complex} of $X_0$ (or of $\shX$). 
The local models $(P_\tau,\rho)$ provide an additional datum not yet
captured in $(B,\P)$. This is a strictly convex multivalued
piecewise affine function $\varphi$ on $B$, that is, a collection of
continuous functions on an open cover of $B$ that are strictly
convex with respect to the polyhedral decomposition $\P$ and which
differ by affine functions on overlaps. In particular, there is a
piecewise linear representative $\varphi_v$ in a neighbourhood of
each vertex on $\P$. Let $P_{v,\RR}\cong
\Lambda_{v,\RR}\oplus\RR\rho$ be a splitting coming from an integral
section of \eqref{projectPv}. Then the boundary $\partial
P_{v,\RR}$ of $P_{v,\RR}$ gives the graph of a piecewise linear
function 
\[
\varphi_v:\Lambda_{v,\RR}\lra \RR\rho,
\] 
uniquely defined up to adding a linear function (change of
splitting). If $v\in\partial B$, then $\varphi_v$ is in fact
defined only on part of $\Lambda_v$. The collection of $P_{v}$
determines $\varphi$ completely via the collection of $\varphi_{v}$.
Conversely, we can obtain all $P_{v}$ from knowing $\varphi$ via
\[
 P_v = \{(m,a)\in\Lambda_v\oplus\ZZ\mid \varphi_v(m)\le a\}  
\]
where we identified $\rho=(0,1)$. The triple $(B,\P,\varphi)$ is
called a \emph{polarized tropical manifold}. The local model
$P_\tau$ for $\tau\in\P$ is a localization in a face corresponding
to $\tau$ of the monoid $P_v$ for any $v\in\tau$.

%===========================================================

\subsection{Simplicity}
\label{subsec-simplicity}
The intersection complex $(B,\P)$ or simply the affine manifold $B$
is called \emph{simple} if certain polytopes constructed locally
from the monodromy around $\Delta$ are elementary lattice simplices.
This condition should be viewed as a local rigidity statement.
For the precise definition, see \cite[Definition 1.60]{logmirror1}. 
It is believed that under suitable conditions, starting with an
intersection complex $(B,\P,\varphi)$ one can subdivide it to turn
it into a simple $(B,\P)$. Geometrically, this would correspond to a
further degeneration of $X_0$. This was shown to be true for all
toric degenerations arising from Batyrev-Borisov examples
\cite{Gr05}. Mumford's degenerations of abelian varieties are
automatically simple since $\Delta=\emptyset$ in this case. Hence,
simplicity is a reasonable condition. We made use of it in
Corollary~\ref{genW2W0}.

%===========================================================

\subsection{Gluing data and reconstruction}
\label{subsec-gluing}
By the main result of \cite{affinecomplex}, any toric log CY space
$X_0^\dagger$ with simple dual intersection complex is the central
fibre of a formal toric degeneration $f:\shX\ra\Spf\CC\lfor t\rfor$. 
Furthermore, given $X_0^\ls$ there is a canonical such toric
degeneration. One constructs this order by order, so for any $k$, a
map $f_k:X_k\ra \Spec\CC[t]/ t^{k+1}$ is built such that the
collection of these is compatible under restriction. We follow
\cite[\S2.2, \S5.2]{theta}. Let $X^\circ_0$ denote the complement of
all codimension two strata in $X_0$. It suffices to produce a
smoothing of $X^\circ_0$ by a similar collection of finite order
thickenings $X^\circ_k$, see \cite{theta}. We can cover $X^\circ_0$
with two kinds of charts, $U_\sigma$ for $\sigma\in\P$ a maximal cell
and $U_\rho$ with $\rho\in\P$ of codimension one. For a maximal cell
$\sigma\in\P$, we denote by $\Lambda_\sigma$ the stalk of $\Lambda$ at
a point in the relative interior of $\sigma$ (any two choices are
canonically identified by parallel transport). For a codimension one
cell $\rho\in\P^{[n-1]}$, we denote by $\Lambda_\rho$ the tangent
lattice to $\rho$. This is invariant under local monodromy and thus
also independent of a stalk of $\Lambda$ in $\Int\rho$. The two types
of open sets are now given by $U_\sigma=\Spec\CC[\Lambda_\sigma]$ and
$U_\rho=\Spec\CC[\Lambda_\rho] [Z_+,Z_-]/(Z_+Z_-)$ for
$\sigma\in\P^{[n]}$ and $\rho\in\P^{[n-1]}$ respectively. We will give
the transitions for these and simultaneously that for their $k$th
order thickenings. We fix a maximal cell $\sigma=\sigma(\rho)$ for
each $\rho\in\P^{[n-1]}$. We also fix a tangent vector $w=w(\rho)\in
\Lambda_\sigma$ such that
\[
\Lambda_\sigma = \Lambda_\rho+\ZZ w.
\]
So $w$ projects to a generator of $\Lambda_\sigma/ \Lambda_\rho
\cong\ZZ$ and we choose it so that it points from $\rho$ into
$\sigma$. Assume we are given for each $\ul\rho\in\tilde\P^{[n-1]}$
a polynomial $f_{\ul\rho}\in \CC[\Lambda_\rho]$ with the following
compatibility property. Let $\sigma'$ denote the other maximal cell
besides $\sigma$ that contains $\rho$. Let $\ul\rho, \ul\rho'
\in\tilde\P^{[n-1]}$ be both contained in $\rho\in\P^{[n-1]}$. The
monodromy along a loop that starts in $\sigma$, passes via $\ul\rho$
into $\sigma'$ and via $\ul\rho'$ back into $\sigma$ is given by an
automorphism of $\Lambda_\sigma$ that fixes $\Lambda_\rho$ and maps
\[
w\longmapsto w+m_{\ul\rho\ul\rho'}
\] 
for some $m_{\ul\rho\ul\rho'}\in\Lambda_\rho$.
The required compatibility between $f_{\ul\rho}$ and $f_{\ul\rho'}$ is then
\begin{equation}
\label{compat-frho}
t^{\kappa_{\ul\rho}}f_{\ul\rho}=z^{m_{\ul\rho\ul\rho'}}
t^{\kappa_{\ul\rho'}}f_{\ul\rho'}
\end{equation}
so $f_{\ul\rho}$ determines $f_{\ul\rho'}$ uniquely and vice versa. We
give thickened versions of $U_\sigma$ and $U_\rho$ by giving the
corresponding rings as
\[
R^k_\sigma = A_k[\Lambda_\sigma],
\]
\[
R^k_{\ul\rho} = A_k[\Lambda_\rho][Z_+,Z_-]/(Z_+Z_--f_{\ul\rho}\cdot
t^{\kappa_{\ul\rho}})
\]
for $A_k=\CC[t]/t^{k+1}$. When $k$ is fixed, we also write $R_\sigma$
for $R^k_\sigma$ and so forth. For $\sigma=\sigma(\rho)$, the map 
\[
\chi^\can_{\ul\rho,\sigma}:R_{\ul\rho} \lra R_\sigma
\] 
is isomorphic to the localization map 
\[
R_{\ul\rho} \lra (R_{\ul\rho})_{Z_+}
\]
by identifying $R_\sigma=(R_{\ul\rho})_{Z_+}$ via $Z_+=z^w$ and
elimination of $Z_-$ via $Z_-=Z_+^{-1} f_{\ul\rho}
t^{\kappa_{\ul\rho}}$. Similarly, if $\sigma'$ is the other maximal
cell containing $\rho$ then we obtain a vector $w_{\ul\rho}\in
\Lambda_{\sigma'}$ by parallel transporting $w$ from $\sigma$ to
$\sigma'$ via $\ul\rho$. The map $R_{\ul\rho} \ra R_{\sigma'}$ is then
given by identifying $R_{\sigma'}= (R_{\ul\rho})_{Z_-}$ via
$Z_-=z^{-w_{\ul\rho}}$ and elimination of $Z_+$. The compatibility
condition \eqref{compat-frho} implies that we have canonical
isomorphisms $R_{\ul\rho}\cong R_{\ul\rho'}$ compatible with the maps
to $R_\sigma$ and $R_\sigma'$, namely $R_{\ul\rho}\ra
R_{\ul\rho'}$ via
\[
Z_+\longmapsto Z_+,
\]
\[
Z_-\longmapsto Z_-z^{m_{\ul\rho\ul\rho'}}.
\]

\begin{definition} 
\label{opengluingdata}
(Open) gluing data $(s_{\ul\rho,\sigma})$ is a collection of
homomorphisms $s_{\ul\rho,\sigma}:\Lambda_\sigma\ra A_0^\times$, one
for each pair $\ul\rho\subset\sigma$.
\end{definition}

We can twist the maps $\chi^\can_{\ul\rho,\sigma}$ to a map
$\chi_{\ul\rho,\sigma}$ by composing $\chi^\can_{\ul\rho,\sigma}$ with
the automorphism 
\[
R_\sigma\lra R_\sigma,\qquad z^m\longmapsto s_{\ul\rho,\sigma}(m) z^m.
\] 

While it is possible to glue all charts to a scheme we want to modify
the construction further before starting the gluing,
cf.~\cite[\S2.3]{theta}. In order to obtain the $X_k$ from
\cite{affinecomplex}, we need to consider certain combinatorial data
that gives the rings of the charts that glue to $X_k$. For the rings,
this will simply be taking further copies of the $R_\sigma$ and
$R_{\ul\rho}$ that we defined already but there will be new maps and
the rings get modified as we need to add higher order terms to the
$f_{\ul\rho}$. The combinatorial object determining $X_k$ is called a
\emph{structure} whose data we now describe.  It comes with a
refinement $\P_k$ of $\P$.  The maximal cells $\fou\in\P_k^{[n]}$
are called \emph{chambers}.  We denote by $\sigma_\fou\in\P^{[n]}$
the unique maximal cell in $\P$ containing $\fou$. The codimension
one cells of $\P_k$ are called \emph{walls} and are denoted $\fop$. A
wall that is contained in $\P^{[n-1]}$ is called a \emph{slab} and it
is in fact contained in a unique $\ul\rho_{\fop}
\in\tilde\P^{[n-1]}$. We typically denote slabs by $\fob$. A wall
that is not a slab is called a \emph{proper wall} and we then denote
by $\sigma_{\fop}\in \P^{[n]}$ the unique maximal cell containing
$\fop$. Each wall $\fop$ comes with a polynomial
$f_\fop\in A_k[\Lambda_{\fop}]$ and if $\fop$ is a slab
and $k=1$ then $f_\fop=f_{\ul\rho_\fop}$. If $\fop$ is a
proper wall then 
\begin{equation}
f_\fop \equiv 1\mod t,
\end{equation}
so $f_\fop$ is invertible.

The rings that give the charts to glue to $X_k^\circ$ are then derived
from the known rings by
\begin{align}
R^k_{\fou}&:=R^k_{\sigma_{\fou}}=A_k[\Lambda_{\sigma_{\fou}}]
&\qquad\hbox{ for a chamber }\fou,\\
R^k_{\fop}&:=R^k_{\sigma_{\fop}}=A_k[\Lambda_{\sigma_{\fop}}]
&\qquad\hbox{ for a proper wall }\fop,\\
R^k_{\fob}&:=A_k[\Lambda_{\fob}][Z_+,Z_-]/(Z_+Z_--f_{\fob}
\cdot t^{\kappa_{\ul\rho_{\fob}}}) &\qquad\hbox{ for a slab }\fob.
\end{align}
Once $k$ is fixed, we will write $R_\fou$ for $R^k_\fou$ and
so forth. For every pair of a slab $\fob$ contained in a chamber
$\fou$ we have a map 
\[
\chi_{\fob,\fou} :R_\fob \lra R_\fou
\] 
defined just like $\chi_{\ul\rho,\sigma}$; in particular,
$\chi_{\fob,\fou}$ incorporates the twist by the gluing automorphism
\[
s_{\ul\rho_{\fob},\sigma_{\fou}}:\Lambda_{\fou}\lra A_0^\times.
\]
Furthermore, when $\fou,\fou'$ are two chambers joined by a
proper wall $\fop$, we have an isomorphism
\[
\theta_\fop: R_{\fou}\lra R_{\fou'}, \qquad z^m\longmapsto
f_\fop^{\langle d_\fop, m\rangle} z^m
\]
where $d_\fop$ is the generator of
$\Lambda_\fop^\perp\subset\check\Lambda_{\sigma_\fop}$ that
points from $\fop$ into $\fou$.

Using these chart transitions, one glues a scheme $X^\circ_k$ for
every $k$ and these are compatible with restrictions $X^\circ_k\ra
X^\circ_{k-1}$.

%===========================================================

\subsection{The normalization condition}
\label{subsect: normalization condition}
The algorithm of \cite{affinecomplex} produces the $(k+1)$-structure
data $\P_{k+1}$ and all $f_\fop$ from the analogous $k$-structure
data. It is inductive in $k$ and deterministic. The $f_\fop$ will
get modified by higher order terms in $t$ that are being added.
Apart from a modification that comes from what is called
\emph{scattering} and that we do not need to go into here, there is
another crucial step to guarantee canonicity which is the
\emph{normalization condition}. The function $f_{\fop}$ associated
to a wall is an element of $A_k[\Lambda_\fop]$ with non-vanishing
constant term $a\in\CC$. If $\fop$ is a proper wall then $a=1$. For
$\fop=\fob$ a slab we consider the condition, in a suitable
completion of $A_k[\Lambda_\fop]$
\cite[Construction~3.24]{affinecomplex},
\begin{equation}
\label{ncond}
\log\left(\frac{f_\fob}{a}\right)=\sum_{i\ge
1}\frac{(-1)^{i+1}}{i}\left(\frac{f_\fop}{a}-1\right)^i \hbox{
has no monomial of the form $t^e$ with $1\le e\le k$}.
\end{equation}
The condition \eqref{ncond} is called the \emph{normalization up to
order $k$} and can be found in \cite[III. Normalization of
slabs]{affinecomplex}.  It becomes a crucial ingredient for our main
result as every $f_\fop$ in the $k$-structure is $k$-normalized
by loc.cit..

%===========================================================
%
%		From tropical cycles to homology cycles
%
%===========================================================

\section{From tropical cycles to homology cycles}
\label{section-tropical-to-homology}
We assume that $B$ is oriented. Assume that we are given a tropical
cycle $\beta_\trop$ on $B$ and the data of the order $k$ structure.
As explained in \S\ref{subsec-gluing} from this data we can glue the
scheme $X^\circ_k$. We assume $X_k^\circ$ partially compactifies to
$X_k$, a flat deformation of $X_0$. This can be assured with
additional consistency assumptions that come out from the
construction in \cite{affinecomplex}, or in the quasiprojective case
can be formulated via theta functions \cite{theta}. We furthermore
assume the existence of an analytic extension $f:X\ra\DD$ of
$f_k:X_k\ra\Spec A_k$, as established for the compact case in
\S\ref{Subsect: Analytic extension}. By perturbing $\beta_\trop$ if
necessary, we assume that its vertices lie in the interior of the
chambers of $\P_k$ and that the edges meet the walls in finitely
many points in their interiors. Let $\shW$ be a collection of
disjoint open sets of
\begin{figure}
\resizebox{0.6\textwidth}{!}{
\includegraphics{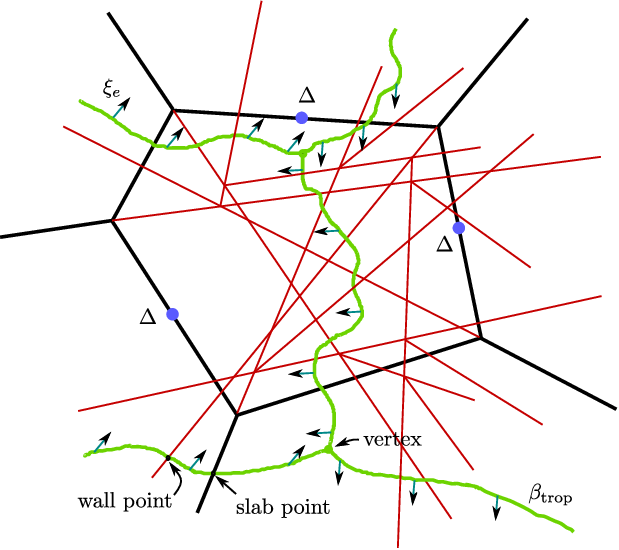}
}
\caption{Types of points in $\beta_\trop$}
\label{Fig: types-of-points}
\end{figure}
$B$ such that the union of their closures covers $\beta_\trop$. We
may assume that each point in $\beta_\trop$ which either is a vertex
or lies in a wall of $\P_k$, is contained in a $W\in\shW$ and each
$W$ contains at most one such point. Furthermore we assume that the
closures of any two open sets that contain a wall point are
disjoint. We have thus four types of open sets $W\in\shW$ given by
whether it contains a vertex, slab point, wall point or none of
these. For a chamber $\fou$, set $\sigma=\sigma_\fou$ and
consider the moment map 
\[
\mu_\sigma:\Hom(\Lambda_\sigma,\CC^\times)\ra
\Int\sigma, \qquad (z_1,\ldots,z_n)\longmapsto
\frac{\sum_{m\in\sigma\cap \Lambda_\sigma} |z^m(z_1,\ldots,z_n)|^2
\cdot m}{\sum_{m\in\sigma\cap \Lambda_\sigma}|z^m(z_1,\ldots,z_n)|^2}.
\]
For the following discussion we identify $\CC^\times= \RR_{>0}\times
S^1$ as real Lie groups, via absolute value and argument. Recall
that $\mu_\sigma$ identifies cotangent vectors of $\sigma$ with
algebraic vector fields on $\Hom(\Lambda_\sigma, \CC^\times)\simeq
(\CC^*)^n$. The induced action of $\Hom(\Lambda_\sigma,S^1)\simeq (S^1)^n$
acts simply transitively on the fibres of $\mu_\sigma$. Moreover,
there is a canonical section $S:\Int\sigma\ra
\Hom(\Lambda_\sigma,\RR_{>0})\subset
\Hom(\Lambda_\sigma,\CC^\times)$. We use this section in the
interior of each chamber $\fou$ to lift $\beta_\trop\cap
\fou$ to $\Spec\CC[\Lambda_\fou]$. Now for an edge $e$ of
$\beta_\trop$, the section $\xi_e\in\Gamma(e,\Lambda)$ defines a
union of translates of a real $(n-1)$-torus
\[
T_{e}=\{\phi\in\Hom(\Lambda_\sigma,S^1)\mid \phi(\xi_e)=1\}
\subset \Hom(\Gamma(e,\Lambda),S^1)\simeq (S^1)^n.
\] 
Note that $T_e$ is connected if and only if $\xi_e$ is primitive.
Let $\bar\xi_e$ be the primitive vector such that $\xi_e$ is a
positive multiple of $\bar\xi_e$. By assumption $\Lambda$ is
oriented and $\xi_e$ induces an orientation of $T_{e}$, namely a
basis $v_2,\ldots,v_n$ of $\Lambda_e/\ZZ\bar\xi_e$ is oriented if 
$\bar\xi_e,v_2,\ldots,v_n$ is an oriented basis of $\Lambda_e$.
Identifying $\Gamma(e,\Lambda)$ with $\Lambda_\sigma$ when $e$
passes through $\sigma$, we obtain a subgroup $T_e\subset
\Hom(\Lambda_\sigma,S^1)$. Over the interior of a chamber
$\fou$ we consider the orbit of the section $S$ under this torus
which we denote by 
\[
T_{e,\fou}:=\Hom(\xi_e^\perp,S^1)\cdot S(e\cap\Int\fou).
\]
Topologically $T_{e,\fou}$ is homeomorphic to a cylinder, a product
of $(S^1)^{n-1}$ with a closed interval.
Each cylinder $T_{e,\fou}$
receives an orientation from the orientation of $T_e$ and the
orientation of $e$. We wish to connect these cylinders to a cycle
$\beta$ and we want to transport them to the nearby fibres. 

%===========================================================

\subsection{The open cover}
We construct a collection $\shU$ of disjoint open sets of $\shX$
whose closures will cover the lift $\beta$ of $\beta_\trop$ to all
nearby fibres. The family $f:\shX\ra\DD$ is smooth away from the
codimension one strata of $X_0$. For each chamber $\fou$, we have a
canonical projection $\Spec A_k[\Lambda_\fou]\ra
\Spec\CC[\Lambda_\fou]$ and a momentum map $\mu=\mu_{\sigma_\fou}$.
If $W$ is an open set entirely contained in $\fou$, then we include
in $\shU$ a corresponding open set $U$ whose intersection with $X_0$
is $\mu^{-1}(W)$ and where the family $f$ is trivialized,
that is, $U=\mu^{-1}(W)\times\DD$ and $f$ is the projection
to $\DD$. Furthermore, we may ask for the other projection
$U\to\mu^{-1}(W)$ to coincide with the restriction of $\Spec
A_k[\Lambda_\fou]\ra \Spec \CC[\Lambda_\fou]$ to the inverse image
of $\mu^{-1}(W)$. 

If $W$ meets a proper wall $\fop$ we have two chambers $\fou$,
$\fou'$ containing $\fop$. Again, we want $U$ to have the property
that $U\cap X_0=\mu^{-1}(W)$. We have a trivialization
$\mu^{-1}(W)\times \Spec A_k$ in the chart $\Spec A_k[\Lambda_\fou]$
and another such in the chart $\Spec A_k[\Lambda_{\fou'}]$. These
differ by $\theta_\fop$. We lift both trivializations from $\Spec
A_k$ to $\DD$ and take $U\subset\shX$ to be an open set containing
both of these. So we do not typically have a projection $U\ra
\mu^{-1}(W)$ on all of $U$. To make sure $U$ is disjoint from
the previously constructed open sets, we may remove the closures of
the other open sets from $U$ if necessary.

It remains to include an open set $U$ in $\shU$ for each $W$
containing a slab point. Let $\fob$ be the slab and $\fou,\fou'$ be
the two chambers containing it. Here we simply take any sufficiently
large open set $U$ with the closure of $U\cap X_0$ agreeing with the
closure of $\mu^{-1}(W\cap\Int\fou)\cup {\mu'}^{-1}
(W\cap\Int\fou')$. Here $\mu'$ is the momentum map for the maximal
cell containing $\fou'$. To make the elements of $\shU$ pairwise
disjoint we remove the closures of the just constructed open sets
that intersect the singular locus of $X_0$ from the previously
constructed sets in $\shU$. This way, we obtain the desired
collection of open sets.

Note that except for the open sets at the slabs, there is a natural
way now to transport the constructed cylinders $T_{e,\fou}$ to every
nearby fibre $f^{-1}(t)$ for $t\in \DD$ by taking inverse images
under the projections $U\ra U\cap X_0$. We next close the union of
cylinders to an $n$-cycle $\beta$.

%===========================================================

\subsection{Closing $\beta$ at vertices}
\label{sec-beta-vertex}
First consider the situation at a vertex $v\in\beta_\trop$. Let
$\fou$ be the chamber containing it.  We identify
$\mu^{-1}(v)=\Hom(\Lambda_v,S^1). S(v)$ with $\Hom(\Lambda_v,S^1)$.
This $n$-torus contains various $(n-1)$-tori
\[
T_{e,v}:=\mu^{-1}(v)\cap T_{e,\fou} = T_e\cdot S(v).
\] 
For each edge $e$ of $\beta_\trop$ that attaches to $v$ there is one
$T_{e,v}$, and if $\xi_e$ is an $m_e$-fold multiple of a primitive vector then $T_{e,v}$ is a
disjoint union of $m_e$ cylinders. Note that $T_{e,v}$ carries the
sign $\eps_{e,v}$ as the induced orientation from $T_{e,\fou}$. We
want to show that the union of the $T_{e}\cdot S(v)$ is the boundary
of an $n$-chain in the real $n$-torus $\mu^{-1}(v)$, so that we can glue
them. Since the $(n-1)$-cycle $T_{e,v}$ is identified under
Poincar\'e duality with $\eps_{e,v}\xi_e|_v \in\Lambda_v=
H^1(\mu^{-1}(v),\ZZ)$,  we see that the union of the Poincar\'e
duals is trivial in cohomology by the balancing
condition~\eqref{balancing}:
\[
0=\sum_{v\in e} \eps_{e,v} \xi_e.
\]
Hence, the union of $T_{e,v}$ is trivial in homology and so there
exists an $n$-chain $\Gamma_v\subset\mu^{-1}(v)$ whose boundary is
this union.  The chain $\Gamma_v$ is unique up to adding multiples
of $\mu^{-1}(v)$.

%===========================================================

\subsection{Closing $\beta$ at proper walls}
\label{sec-beta-wall}
Let $U\in\shU$ be an open set the corresponding open set $W\subseteq
B$ of contains a point $p$ in a proper wall $\fop$. Let $e$ be
the edge of $\beta_\trop$ containing $p$. By construction, $U$
contains $\mu^{-1}(W)\times\DD$ in two different ways given by the
trivializations in $\Spec A_k[\Lambda_\fou]$ and $\Spec
A_k[\Lambda_{\fou'}]$ respectively. We transport $T_{e,\fou}$
and $T_{e,\fou'}$ into the nearby fibres by the respective
trivializations. Then we need to connect theses transports over $p$.
We do this by interpolation along straight real lines in
$\DD\times(\CC^*)^n\subset\CC^{n+1}$. Let
$v_1,\ldots,v_n$ be an oriented basis of $\Lambda_p$ with $v_1$
pointing from $\fop$ into $\fou$ and so that
$v_2,\ldots,v_n$ is a basis of $\Lambda_\fop$. As before, we set
$z_j=z^{v_j}$. Writing $f_\fop=1+\tilde f$, we have
\[
\theta_\fop:\Spec R_{\fou'}\lra \Spec R_\fou,
(t,z_1,\ldots,z_n)\longmapsto (t,(1+\tilde
f(t,z_2,\ldots,z_n))z_1,z_2,\ldots,z_n).
\]
For fixed $x=(t,z_1,z_2,\ldots,z_n)$ consider the map
\[
\gamma_x:[0,1]\to \Spec R_\fou,\quad \lambda\longmapsto
(t,(1+\lambda \tilde f(t,z_2,\ldots,z_n))z_1,z_2,\ldots,z_n).
\]
As a map in $x$ this map extends to a neighbourhood of $U\cap X_0$,
which we denote by the same symbol. Analogous conventions are
understood at several places in the sequel.
Now we define the chain $\Gamma_p= \bigcup_{x\in T_e\cdot S(p)}
\gamma_x([0,1])$ and give it the orientation induced from that of
$T_e$ and $[0,1]$. We find that $\Gamma_p$ connects $T_{e,\fou}$
with $T_{e,\fou'}$ over $p$ with the correct orientation if $e$
traverses from $\fou$ to $\fou'$ at $p$; otherwise we take
$-\Gamma_p$ for this purpose.

%===========================================================

%===========================================================
\subsection{Closing $\beta$ at slabs}
\label{sec-beta-slab}
Let $U\in\shU$ correspond to an open set $W\subset B$ that contains
a slab point $p\in\fob$. Let $e$ be the edge of $\beta_\trop$
containing $p$. We know that $U$ contains
\[
\Spec R^k_\fob=\Spec
A_k[\Lambda_{\fob}][Z_+,Z_-]/(Z_+Z_--f_{\fob}\cdot
t^{\kappa})
\]
as a non-reduced closed subspace, where $\kappa=
\kappa_{\ul\rho_{\fob}}$. Without restriction we may assume that $U$
is given by a hypersurface of the form $\tilde Z_+\tilde Z_- \tilde
f_\fob t^{\kappa_{\ul\rho_\fob}}=0$ in an open subset of
$\CC^{n+2}$. Let $\fou,\frak{u'}$ denote the chambers
containing $\fob$ with $\fou\subset\sigma(\rho_{\fob})$, with
$\sigma(\rho_\fob)$ the chosen maximal cell containing $\rho_\fob$.
So $w=w(\rho_\fob)$ points into $\fou$ and $Z_+=z^w$. As before, let
$v_1,\ldots,v_n$ be an oriented basis of $\Lambda_p$ with $v_1=w$
and $v_2,\ldots,v_n$ spanning $\Lambda_\fob$. Then
$z_i=z^{v_i}$ and $t$ give coordinates on $U\setminus (X_0\cap
U)$. These correspond to the trivialization of an open subset of
$f:U\ra\DD$ given by $\fou$. We obtain another set of coordinates
replacing $Z_+$ by $Z^{-1}_-$ that comes from the trivialization of
part of $U$ via $\fou'$. As in \S\ref{subsect: normalization
condition} we write $f_\fob=a(1+\tilde f)$ for $a\in\CC^\times$ and
$\tilde f$ having no constant term. Recall the gluing data $s_{\ul
\rho_{\fob},\sigma_{\fou}}:\Lambda_\fou\ra A_0^\times$,
$s_{\fou'}=s_{\ul\rho_{\fob}, \sigma_{\fou'}}: \Lambda_{\fou'}\ra
A_0^\times$. We identify $\Lambda_{\fou'}$ and $\Lambda_{\fou}$ by
parallel transport through $\fob$ and set 
\[
s_j=s_{\ul\rho_{\fob},\sigma_\fou}(v_j),\qquad
s_j'=s_{\ul\rho_{\fob},\sigma_{\fou'}}(v_j).
\]
If $v_1^*,\ldots,v_n^*$ denotes the dual basis to $v_1,\ldots.,v_n$
then we have
\begin{equation}
\label{s-in-terms-of-sj}
s_{\ul\rho_{\fob},\sigma_{\fou}}=\prod_{j=1}^n s_j^{v_j^*}
,\qquad   s'_{\ul\rho_{\fob},\sigma_{\fou'}}=\prod_{j=1}^n
(s'_j)^{v_j^*} .
\end{equation}
Solving for $Z_+$ yields $Z_+=f_{\fob}t^{\kappa} Z_-^{-1}$, so
the transformation on points given in the second set of coordinates
to the first reads
\[
\theta_\fob:(t,z_1,\ldots,z_n)\longmapsto
\left(t,\frac{s'_1}{s_1} a(1+\tilde f(t,s'_2 z_2,\ldots,s'_n
z_n))t^\kappa
z_1,\frac{s'_2}{s_2}z_2,\ldots,\frac{s'_n}{s_n}z_n\right).
\]
We factor $\theta_\fob= \theta_{s,a}\circ \theta_f$ where
\[
\theta_f :(t,z_1,\ldots,z_n)\longmapsto \left(t,(1+\tilde f(t,s_2
z_2,\ldots,s_n z_n))z_1,z_2,\ldots,z_n\right),
\]
\[
\theta_{s,a}:(t,z_1,\ldots,z_n)\longmapsto \left(t,\frac{s'_1}{s_1}
a t^\kappa
z_1,\frac{s'_2}{s_2}z_2,\ldots,\frac{s'_n}{s_n}z_n\right).
\]
We have $\partial W\cap\beta_\trop =\{b,b'\}$, say
$b\in\fou,b'\in\fou'$.  Let $S:\fou\ra\Spec
\CC[\Lambda_\fou]$ and $S':\fou'\ra\Spec
\CC[\Lambda_{\fou'}]$ be the respective sections of the moment
map.  We have $(n-1)$-tori  $T_e\cdot S(b)$ and $T_e\cdot S'(b')$
that we transport to the nearby fibres by the respective
trivializations of $f$. We then want to connect them in each fibre
of $f$. We first attach a chain $\Gamma_{b'}$ to $T_e\cdot S(b)$
that we construct in a similar way as we built $\Gamma_p$ at a
proper wall point.  It takes care of the transformation $\theta_f$. 
Let $S(b)=(r_1,\ldots,r_n)$ and $S'(b')=(r'_1,\ldots,r'_n)$ be the
respective coordinates. As a second step we then need to connect
$T_e\cdot(r_1,\ldots,r_n)$ to $T_e\cdot (\frac{s'_1}{s_1}a t^\kappa
r'_1,\frac{s'_2}{s_2}r'_2,\ldots,\frac{s'_n}{s_n}r'_n)$ which we do
by straight line interpolation. Let $\log$ denote the inverse of
$\exp$ with imaginary part in $[0,2\pi)$.  For $\lambda\in[0,1]$ and
$z\in\CC^*$, we set $z^\lambda:=\exp(\lambda\log(z))$. For $t\in
T_e$ we map $[0,1]$ into $U$ by
\[
\lambda\longmapsto \left(\left(\frac{s'_1}{s_1}a
t^\kappa\frac{r_1'}{r_1}\right)^\lambda r_1,
\left(\frac{s'_2}{s_2}\frac{r_2'}{r_2}\right)^\lambda r_2,
\ldots,
\left(\frac{s'_n}{s_n}\frac{r_n'}{r_n}\right)^\lambda r_n\right).
\]
Taking the $T_e$-orbit of the image of $[0,1]$ traces out a chain
$\Gamma_p$ and the concatenation of $\pm\Gamma_p$ and
$\pm\Gamma_{b'}$ closes $\beta$ over the slab point $p$ (the sign
$\pm$ being $+$ if and only if $e$ traverses from $\fou$ to
$\fou'$ at $p$).

%===========================================================
%
%		Computation of the period integrals
%
%===========================================================

\section{Computation of the period integrals}
\label{section-compute-periods}
As before, we assume $B$ to be oriented, so we have a canonical
$n$-form $\Omega\in\Omega_{X_0^\dagger/0^\dagger}$  given by $\dlog
z^{v_1}\wedge\ldots\wedge \dlog z^{v_n}$ for $v_1,\ldots,v_n$ an
oriented basis $\Lambda_\sigma$ for any maximal cell $\sigma$. The
orientation of $B$ ensures $\Omega$ is independent of $\sigma$. Since
$\Omega^n_{\shX^\dagger/\Tbs^\dagger}\cong f^*\shO_\Tbs$, there is a
canonical extension of $\Omega$ to $\shX$ by requiring
\[
\int_\alpha\Omega=\operatorname{const}
\]
as a function on $\Tbs$ for $\alpha\cong (S^1)^n$ a vanishing
$n$-cycle at a deepest point of $X_0$ (any two choices for $\alpha$
are homologous). 
\begin{lemma} 
\label{lemma-intalpha}
$\displaystyle\int_\alpha\Omega=(2\pi i)^n.$
\end{lemma}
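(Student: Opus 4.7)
The plan is to evaluate the integral in a local toric chart at a $0$-dimensional stratum of $X_0$, where the form $\Omega$ takes the standard shape and the vanishing cycle $\alpha$ admits an explicit torus representative.

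First I would pick a vertex $v\in\P$ corresponding to a deepest point $x_v\in X_0$. Since any two choices of vanishing $n$-cycle $\alpha$ are homologous, and since $\Omega$ is closed, it suffices to evaluate $\int_\alpha \Omega$ for one convenient representative. At $x_v$ the space $\shX$ has the toric local model $\Spec\CC[P_v]$, and the vanishing cycle is homologous to the compact real $n$-torus
\[
\alpha = \{(z_1,\ldots,z_n)\mid |z_i|=c_i\}\subset \Hom(\Lambda_v,\CC^\times),
\]
with $z_i=z^{v_i}$ for an oriented basis $v_1,\ldots,v_n$ of $\Lambda_v$ and any choice of positive constants $c_i$. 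The orientation of $\alpha$ induced from the orientation of $B$ is the product orientation coming from traversing each $S^1$-factor counterclockwise in the order $v_1,\ldots,v_n$.

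Next I would parameterize $\alpha$ by angles $z_j=c_j e^{i\theta_j}$ with $\theta_j\in[0,2\pi]$. Since each $c_j$ is constant on $\alpha$, one has $\dlog z_j = dz_j/z_j = i\,d\theta_j$ on $\alpha$. Pulling back gives
\[
\Omega|_\alpha = \dlog z^{v_1}\wedge\cdots\wedge\dlog z^{v_n}
= i^n\, d\theta_1\wedge\cdots\wedge d\theta_n.
\]
Integrating over the oriented product $[0,2\pi]^n$ yields
\[
\int_\alpha \Omega = i^n (2\pi)^n = (2\pi i)^n,
\]
which is the claimed formula.

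The only subtlety worth checking is the sign, i.e.\ that the orientation of $\alpha$ coming from the orientation of $B$ agrees with the product orientation used above. This follows because the orientation of $B$ is what makes $v_1,\ldots,v_n$ an oriented basis of $\Lambda_v$, and the identification $\Hom(\Lambda_v,S^1)\simeq (S^1)^n$ given by this basis transports the product orientation to the torus fibre $\mu^{-1}(v)$ containing $\alpha$. This is exactly the same orientation convention used throughout \S\ref{section-tropical-to-homology} to orient the cylinders $T_{e,\fou}$, so no extra sign appears. The calculation is independent of the choice of vertex $v$ and of the constants $c_j$, and by the definition of the canonical extension $\int_\alpha\Omega$ is constant on $\Tbs$, so the value $(2\pi i)^n$ is the universal normalization.
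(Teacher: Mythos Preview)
Your proof is correct and follows essentially the same approach as the paper: choose a vertex $v$, identify the vanishing cycle with the real $n$-torus $\Hom(\Lambda_v,S^1)$ in the local toric chart, parametrize by angles $\theta_j$, and integrate $i^n\,d\theta_1\wedge\cdots\wedge d\theta_n$ to obtain $(2\pi i)^n$. The paper adds the explicit identification $\alpha=\{\phi\in\Hom(P_v,S^1)\mid\phi(\rho)=1\}=\Hom(P_v/\rho,S^1)=\Hom(\Lambda_v,S^1)$ via the fibre $f^{-1}(1)$, but the actual computation is identical to yours.
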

\begin{proof}
This can be checked in any local chart of a deepest point,  so let
$v\in\P$ be a vertex and $P_v$ the associated monoid such that
$\shX$ in a neighbourhood of the deepest stratum $x\in X_0$
corresponding to $v$ is given by $U=\Spec\CC[P_v]$ and $f=z^\rho$
with $\rho\in P_v$.  The fibre $f^{-1}(1)$ is given by 
\[
\{\phi\in\Hom(P_v,\CC)\mid \varphi(\rho)=1\}
\]
which contains the vanishing $n$-cycle as 
\[
\alpha=\{\phi\in\Hom(P_v,S^1)\mid \varphi(\rho)=1\}=\Hom(P_v/\rho,S^1)
\]
Note that $P_v/\rho = \Lambda_v$, so $\alpha = \Hom(\Lambda_v,S^1)$
which we can homotope to the central fibre and integrate there. Let
$v_1,\ldots,v_n$ be an oriented basis of $\Lambda_v$ then
$\Omega=\dlog z^{v_1}\wedge\ldots\wedge \dlog z^{v_n}$. Set
$z^{v_j}=r_je^{i\theta_j}$, so $\dlog z^{v_j}= \dlog r_j +
id\theta_j$.  Since $\alpha$ is parametrized by having each
$\theta_j\in[0,2\pi)$, we obtain
\[
\int_\alpha\Omega = \int_{\theta_1,\ldots,\theta_n}
i^nd\theta_1\wedge\ldots\wedge d\theta_n = (2\pi i)^n.
\]
\end{proof}

We will make use of the following simple lemma.
\begin{lemma} 
\label{common-lemma}
Let $0\neq \xi\in \ZZ^n$ and let
\[
T=\{\phi\in\Hom(\ZZ^n,\RR/\ZZ)\mid \phi(\xi)=0\}
\]
be the possibly disconnected codimension one subgroup of
$\Hom(\ZZ^n,\RR/\ZZ)$. Let $\theta_1,\ldots,\theta_n$ be a basis of
$\ZZ^n$ and therefore a set of functions $\phi\mapsto
\phi(\theta_j)$ on $T$.  Their duals $\theta_1^*,\ldots,\theta_n^*$
are standard coordinates on $\RR^n$. To give $T$ an orientation, it
suffices to give it for its identity component.  We define a basis
$v_2,\ldots,v_n$ of $T_0 T$ to be oriented if $\xi,v_2,\ldots,v_n$
is an oriented basis of $\ZZ^n$ for its standard orientation. We
have
\[
\int_T d\theta_2\ldots d\theta_n = \langle \theta_1^*,\xi\rangle
\] 
and more generally
\[
\int_T d\theta_1\ldots\widehat{d\theta_j}\ldots d\theta_n =
(-1)^{j+1}\langle \theta_j^*,\xi\rangle.
\] 
\end{lemma}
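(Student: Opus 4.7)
The plan is to interpret $\int_T \omega_j$, with $\omega_j := d\theta_1 \wedge \cdots \widehat{d\theta_j} \cdots \wedge d\theta_n$, as a Poincar\'e-duality pairing on the oriented $n$-torus $T_n := \Hom(\ZZ^n,\RR/\ZZ)$. Since $\omega_j$ is closed and $T \subset T_n$ is a closed oriented $(n-1)$-cycle (possibly disconnected), the integral depends only on the class $[T]\in H_{n-1}(T_n,\ZZ)$ and equals $\int_{T_n}\eta\wedge\omega_j$ for any closed 1-form $\eta$ representing $\mathrm{PD}([T])\in H^1(T_n,\ZZ)$. Under the natural identification $H^1(T_n,\ZZ)\cong \ZZ^n$ sending $\theta_k$ to $[d\theta_k]$, I aim to show $\mathrm{PD}([T]) = \xi$. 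Observe that the first displayed formula in the lemma is just the $j=1$ case of the second, so only the second needs to be proved.

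The class $\mathrm{PD}([T])$ is computed via the Lie group homomorphism $p:T_n\to \RR/\ZZ$, $\phi\mapsto\phi(\xi)\bmod\ZZ$, which has kernel exactly $T$, so $T = p^{-1}(0)$. Letting $\alpha$ be the standard coordinate on $\RR/\ZZ$, one finds $p^*(d\alpha) = \sum_k a_k\, d\theta_k$, where $a_k := \langle\theta_k^*,\xi\rangle$; this represents $\xi \in \ZZ^n \cong H^1(T_n,\ZZ)$. The general fact that the Poincar\'e dual of a regular fiber of an oriented submersion is the pullback of a unit-volume form on the target then gives $\mathrm{PD}([T]) = \xi$, conditional on the coorientation of $T$ from $p$ agreeing with the orientation prescribed in the statement.

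This sign compatibility is the main (and essentially the only) subtle point. I would check it by identifying $\ZZ^n$ with the Lie algebra $T_0 T_n = \Hom(\ZZ^n,\RR)$ via the dual-basis correspondence $\theta_k\leftrightarrow\theta_k^*$; then $\xi\in\ZZ^n$ corresponds to $\tilde\xi = \sum_k a_k\theta_k^* \in T_0 T_n$, the convention of the lemma reads $\det(\tilde\xi, v_2, \ldots, v_n) > 0$ with respect to the standard basis $\theta_1^*,\ldots,\theta_n^*$ of $T_0 T_n$, and the coorientation of $T$ coming from $p$ is determined by $p^*(d\alpha)(\tilde\xi) = \tilde\xi(\xi) = \sum_k a_k^2 > 0$. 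The two positivities, combined with the standard orientation of $T_n$ being $d\theta_1\wedge\cdots\wedge d\theta_n$, confirm the match, so $\eta := \sum_k a_k\, d\theta_k$ does represent $\mathrm{PD}([T])$.

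A single line of algebra closes the proof: in $\eta\wedge\omega_j$ only the $a_j\, d\theta_j$ term of $\eta$ survives (the others produce a repeated $d\theta_k$), so
\[
\eta\wedge\omega_j = a_j\, d\theta_j\wedge d\theta_1\wedge\cdots\widehat{d\theta_j}\cdots\wedge d\theta_n = (-1)^{j-1}a_j\, d\theta_1\wedge\cdots\wedge d\theta_n,
\]
and integration over the unit-volume torus $T_n$ yields $\int_T\omega_j = (-1)^{j-1}a_j = (-1)^{j+1}\langle\theta_j^*,\xi\rangle$, as required. The main obstacle, as emphasized, is the orientation reconciliation between the convention placing $\xi$ and the $v_i$ together in $\ZZ^n$ and the geometric coorientation of $T$ as a fiber of $p$; once past that, Poincar\'e duality delivers the formula with no further effort.
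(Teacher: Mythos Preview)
Your argument is correct, but it takes a genuinely different route from the paper's proof. The paper first reduces the general formula to the case $j=1$ by a permutation of the $\theta_j$'s, and then observes that $\int_T d\theta_2\cdots d\theta_n$ equals the degree of the projection $T\to \operatorname{span}\{\theta_2^*,\ldots,\theta_n^*\}/\ZZ^{n-1}$; this degree is computed as the determinant of the map $\bigoplus_{i=2}^n\ZZ\theta_i\to\ZZ^n/\xi\ZZ$, which is $\langle\theta_1^*,\xi\rangle$. That argument is very short and entirely elementary---no Poincar\'e duality, just the definition of degree.

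Your approach instead realises $[T]\in H_{n-1}(T_n,\ZZ)$ as Poincar\'e dual to $\xi\in H^1(T_n,\ZZ)$ via the fibration $p:T_n\to\RR/\ZZ$, and then evaluates $\int_{T_n}\eta\wedge\omega_j$ for all $j$ at once. This is more conceptual and treats every $j$ uniformly without a reduction step, at the price of invoking the ``PD of a fibre is the pullback of a volume form'' principle and a more delicate orientation check (which you carry out carefully). Either proof is adequate here; the paper's degree argument is arguably the minimal one for this isolated lemma, while yours makes the underlying reason---that $T$ represents the homology class dual to $\xi$---more visible.
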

\begin{proof}
The more general statement follows from the first statement by
permutation and relabelling of variables. The map from $T$ to
$\operatorname{span}\{\theta_2^*,\ldots,\theta_n^*\}/\ZZ^{n-1}$ has
degree $\langle \theta_1^*,\xi\rangle$.  Indeed, this map is
obtained by applying $\Hom(\cdot,\RR/\ZZ)$ to $\oplus_{i=2}^n\ZZ
e_i\ra\ZZ^n/\xi\ZZ$ and this map has determinant $\langle
\theta_1^*,\xi\rangle$.
\end{proof}
The purpose of this section is to prove
Theorem~\ref{period-computed}. When we write $\int_\beta\Omega$ in
the following, we understand this integral as a function in $t$ for
$t\neq 0$.  In particular, we implicitly transport $\beta$ into all
nearby fibres.  The ambiguity of doing so due to monodromy will be
reflected by the circumstance that the logarithm in the base
coordinate appears. Without further mention, we thus implicitly
choose a branch cut in the base.  The choice implicitly made here is
undone upon exponentiating as we eventually do to obtain $h_\beta$
for the assertion of Theorem~\ref{period-computed}.

In the coming sections we first deal with the integral of $\Omega$
over each piece of $\beta$ individually before we finally put the
results together. In the computation we pretend that the patching of
our $k$-th order approximation agrees with the patching of $\shX$.
The result of the period computation can, however, be expressed
purely in terms of the result of applying the logarithmic
deriviative $t\frac{d}{dt}$ to the period integrals and hence, to
order $k$, only depends on the behaviour of the patching to the same
order. Thus this presentational simplification is justified.

%===========================================================

\subsection{Integration over the wall-add-in $\Gamma_p$}
\label{section-wall-integral}
We assume the setup and notation of \S\ref{sec-beta-wall}.  We have
a parametrization of $\Gamma_p$ in the fibre of $f$ for fixed $t$
given by
\[
\phi:(T_e\cdot S(p))\times [0,1]\to U,\qquad
(z_1,\ldots,z_n,\lambda)\longmapsto (1+\lambda \tilde
f(t,z_2,\ldots,z_n))z_1,z_2,\ldots,z_n)
\]
Pulling back $\Omega$ under $\phi$ yields
\[
\begin{array}{rcl}
\phi^*\Omega&=&\Omega+\dlog(1+\lambda \tilde
f(t,z_2,\ldots,z_n))\wedge\dlog z_2\wedge\ldots\wedge\dlog z_n\\
&=&\Omega+\partial_\lambda \log(1+\lambda \tilde
f(t,z_2,\ldots,z_n))d\lambda\wedge\dlog z_2\wedge\ldots\wedge\dlog
z_n.\\
\end{array}
\]
As the first summand $\Omega$ is constant in $\lambda$, it is
trivial as an $n$-form on $(T_e\cdot S(p))\times [0,1]$ and we only
need to deal with the second term. Let $u_1,\ldots,u_{n-1}$ be
standard coordinates on $\RR^{n-1}$ that we use as coordinates on
$\RR^{n-1}/\ZZ^{n-1}$.  Set $z_j=r_je^{i\theta_j}$ and let 
\[
\theta_j=\sum_{k=1}^{n-1} a_{k,j} u_k
\] 
for $1\le j\le n$ be a linear parametrization of $T_e$ where $u_j\in[0,1)$. 
If $S(p)=(r_1,\ldots,r_n)$ then we have
\begin{align*}
\int_{\Gamma_p}\Omega& =\int_{(T_e \cdot S(p))
\times[0,1]}\phi^*\Omega\\
&=\int_{u_1,\ldots,u_{n-1}}\int_\lambda \partial_\lambda
\log(1+\lambda \tilde
f(t,r_2e^{i\theta_2},\ldots,r_ne^{i\theta_n}))d\lambda\dlog
(r_2e^{i\theta_2})\ldots\dlog (r_ne^{i\theta_n})\\
&= c \int_{u_1,\ldots,u_{n-1}}\int_\lambda \partial_\lambda
\log(1+\lambda \tilde f(t,r_2e^{i\sum a_{k,2}u_k},\ldots,r_ne^{i\sum
a_{k,n}u_k}))d\lambda d u_1 \ldots d u_{n-1}
\end{align*}
where $c\in\CC$ equals $i^{n-1}$ multiplied by a polynomial
expression in the $a_{k,j}$. Integrating out $\lambda$ leads to
\[
\int_{\Gamma_p}\Omega= c\int_{u_1,\ldots,u_{n-1}} \log(1+\tilde
f(t,r_2e^{i\sum a_{k,2}u_k},\ldots,r_ne^{i\sum a_{k,n}u_k})) d u_1
\ldots d u_{n-1}.
\]
This integral vanishes up to $t$-order $k$ by the following lemma
combined with the normalization condition \eqref{ncond}.

\begin{lemma} 
Let $m\in\CC[z_1,\ldots,z_{n-1}]$ be a monomial then
\[
\int_{u_1,\ldots,u_{n-1}} m(e^{iu_1},\ldots,e^{iu_{n-1}})du_1\ldots
du_{n-1}\neq 0
\]
if and only if $m$ is constant.
\end{lemma}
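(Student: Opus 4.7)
The plan is to reduce everything to the orthogonality of characters on the circle. Writing the monomial as $m(z_1,\ldots,z_{n-1}) = c\, z_1^{k_1}\cdots z_{n-1}^{k_{n-1}}$ with $c\in\CC^\times$ and $k_j\in\ZZ_{\ge 0}$, the integrand factors after substitution:
\[
m(e^{iu_1},\ldots,e^{iu_{n-1}}) = c\prod_{j=1}^{n-1}e^{ik_j u_j},
\]
so Fubini turns the multi-integral into a product $c\prod_j \int e^{ik_j u_j}\,du_j$ of independent one-dimensional integrals.

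Next, I would invoke the standard formula $\int e^{iku}\,du = 0$ when $k\neq 0$ and equals the full period when $k=0$, where the integration is taken over a complete period of the character $u\mapsto e^{iu}$ (this is the setup already used in Lemma~\ref{lemma-intalpha} and is also consistent with the torus parametrization chosen in \S\ref{section-wall-integral}, since the coordinates $u_k$ parametrize the compact torus $T_e$). Consequently the whole product vanishes as soon as any single $k_j$ is nonzero, and equals a nonzero constant times $c$ otherwise.

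Putting the two directions together: the integral is nonzero precisely when every $k_j=0$, i.e.\ when $m$ has no $z_j$-dependence, i.e.\ when $m$ is constant. Conversely, a constant nonzero $m$ gives a clearly nonzero integral. This gives both implications of the ``if and only if''.

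The main, essentially only, obstacle is bookkeeping: one must verify that the integration domain in the preceding application (the pullback of $\log(1+\tilde f)$ to $T_e\cdot S(p)$) is genuinely a full torus with respect to the variables $u_k$, so that each one-dimensional integral really does range over a full period of the relevant exponential character. Once this is confirmed, the lemma follows from a one-line computation, and it combines with the normalization condition \eqref{ncond} to force $\int_{\Gamma_p}\Omega \equiv 0 \bmod t^{k+1}$, as needed for the wall contribution to drop out of the period calculation.
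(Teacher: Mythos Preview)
Your argument is correct and is exactly the intended one: the paper states this lemma without proof, treating it as the standard orthogonality-of-characters fact that you spell out. Your reduction via Fubini to the one-variable identity $\int e^{iku}\,du=0$ for $k\neq 0$ is the right (and essentially only) justification, and your remarks about the domain being a full torus in the $u_k$ are consistent with the parametrization set up just before the lemma.
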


%===========================================================

\subsection{Integration over the vertex-add-in $\Gamma_v$}
\label{section-vertex-integral}
We assume the setup and notation of \S\ref{sec-beta-vertex}. We have
$\Gamma_v\subset \mu^{-1}(v)=\Hom(\Lambda_v,S^1)$. 

\begin{lemma} 
\label{lemmaav}
Let $v\in\beta_\trop$ be a vertex of valency $V$. Then for an oriented
basis $v_1,\ldots,v_n$ of $\Lambda_v$ and
$z^{v_j}=r_je^{i\theta_j}$, we get
\[
\int_{\Gamma_v}\Omega=i^n \int_{\Gamma_v}
d\theta_1\ldots d\theta_n \in \left\{ \begin{array}{ll} (2\pi i)^n \ZZ & V\hbox{ is even,}\\ (2\pi i)^n \frac12\ZZ & V\hbox{ is odd.}\end{array} \right.
\]
\end{lemma}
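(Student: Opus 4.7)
The first equality is a direct pullback computation. Identifying $\mu^{-1}(v)$ with $\Hom(\Lambda_v,S^1)$ via translation by $S(v)$, each modulus $r_j=|z^{v_j}|$ is constant on $\mu^{-1}(v)$, so $\dlog z^{v_j}|_{\mu^{-1}(v)}=\dlog r_j+i\,d\theta_j=i\,d\theta_j$ and hence $\Omega|_{\mu^{-1}(v)}=i^n\,d\theta_1\wedge\cdots\wedge d\theta_n$. The lemma thus reduces to locating $\int_{\Gamma_v}d\theta_1\wedge\cdots\wedge d\theta_n$ inside $\RR$.

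Next, recall that $\Gamma_v$ is determined by its boundary only up to adding integer multiples of the fundamental class $[\mu^{-1}(v)]$; since $\int_{T^n}d\theta_1\wedge\cdots\wedge d\theta_n=(2\pi)^n$, the integral is a priori only well-defined modulo $(2\pi)^n\ZZ$. To extract the half-integer refinement I would apply the inversion $\sigma\colon T^n\to T^n$, $\phi\mapsto-\phi$. The crucial point is that $S(v)$ corresponds to the identity of $T^n$, so each $T_{e,v}=T_e\cdot S(v)$ is an honest subgroup of $T^n$: it is $\sigma$-invariant as a set, but $\sigma$ acts on its orientation by $(-1)^{n-1}$. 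Consequently $\partial\bigl((-1)^{n-1}\sigma_*\Gamma_v\bigr)=\partial\Gamma_v$, so $(-1)^{n-1}\sigma_*\Gamma_v-\Gamma_v=N[T^n]$ for some $N\in\ZZ$. Combined with $\sigma^*\omega=(-1)^n\omega$ for $\omega=d\theta_1\wedge\cdots\wedge d\theta_n$, integrating and using $\int_{\sigma_*\Gamma_v}\omega=\int_{\Gamma_v}\sigma^*\omega$ gives $2\int_{\Gamma_v}\omega=(-1)^n N(2\pi)^n$. This already places the integral in $\tfrac12(2\pi)^n\ZZ$, proving the odd-valency half of the lemma.

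To establish the even-valency case it remains to show that $N$ is even precisely when $V$ is even. I would argue this by localizing near the identity $0\in T^n$, the common intersection point of all the subgroups $T_{e,v}$. In a small neighbourhood the picture is that of $V$ cooriented hyperplanes $\xi_e^\perp\subset\Lambda_{v,\RR}$ through the origin, cutting this neighbourhood into chambers. The chain $\Gamma_v$ restricts to an integer-coefficient sum of chambers whose coefficients jump by $\pm\eps_{e,v}$ as one crosses $T_{e,v}$. Comparing coefficients on any chamber $R$ and its opposite $-R=\sigma(R)$ by crossing each of the $V$ hyperplanes once produces a signed sum of the $\eps_{e,v}$, whose parity is $V\bmod 2$; this signed sum controls the parity of $N$ appearing in the cycle $(-1)^{n-1}\sigma_*\Gamma_v-\Gamma_v=N[T^n]$. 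The conclusion is $N\equiv V\pmod 2$, which upgrades the integral into $(2\pi)^n\ZZ$ for $V$ even.

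The main technical obstacle is the parity step of the last paragraph: making the local chamber count rigorous, in particular relating the jumps of the chamber coefficients of $\Gamma_v$ across $0$ to the global invariant $N$. A useful sanity check is the case $n=1$, where the claim can be verified by explicit calculation: writing $\partial\Gamma_v=\sum_e\eps_{e,v}T_e$ with $T_e$ the $|\xi_e|$-th roots of unity, one finds that the signed-angle sum of $\partial\Gamma_v$ modulo $2\pi$ equals $V\pi$, using the balancing condition $\sum_e\eps_{e,v}\xi_e=0$ together with the identity $\sum_{k=0}^{m-1}2\pi k/m=\pi(m-1)$. The higher-dimensional statement should then follow either by slicing along a generic codimension-$1$ subtorus and reducing to this $1$-dimensional computation, or by the local chamber analysis sketched above.
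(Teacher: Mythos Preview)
Your inversion argument for the half-integer statement is correct and is in fact the same symmetry the paper exploits, but the paper only invokes it after two preliminary reductions you bypass. First, the paper replaces the valency-$V$ vertex by a chain of $V-2$ trivalent vertices (inserting new edges with sections $\xi_1+\cdots+\xi_{j+1}$ so that balancing holds), and argues the associated $n$-cycles are homologous, giving $\int_{\Gamma_v}\Omega=\sum_{j}\int_{\Gamma_{w_j}}\Omega$. Second, for a trivalent vertex it splits $\Lambda_v\cong V\oplus W$ with $V$ the saturation of the span of $\xi_1,\xi_2,\xi_3$, integrates out the $W$-factor, and is left with a one- or two-dimensional picture. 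Only then does it use $x\mapsto -x$, showing that in the one-dimensional case $\iota$ carries $\Gamma_v$ to its complement, so the integral is exactly $\tfrac12(2\pi i)$. The even--odd dichotomy then falls out for free: a valency-$V$ vertex contributes a sum of $V-2$ odd halves, which lies in $(2\pi i)^n\ZZ$ precisely when $V$ is even.

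Your route avoids the trivalent reduction entirely and is more conceptual for the $\tfrac12\ZZ$ part, but the price is the parity step you flag as the main obstacle. As written that step is a genuine gap. Your local chamber argument computes $c_R-c_{-R}\equiv V\pmod 2$, whereas what you need is the parity of $N=-(c_R+c_{-R})$; these agree mod $2$, but to identify the local constant $-(c_R+c_{-R})$ with the global $N$ you must use that a closed singular $n$-chain on the connected $n$-manifold $T^n$ is (as a current, or after passing to a cellular model adapted to the arrangement $\bigcup_e T_{e,v}$) a constant multiple of the fundamental class. You also need to know $\Gamma_v$ can be taken to be a $\ZZ$-linear combination of closures of components of $T^n\setminus\bigcup_e T_{e,v}$, and that only one component of each $T_{e,v}$ passes through $0$ so that exactly $V$ walls are crossed. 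None of this is hard, but you have not supplied it, and your proposal explicitly leaves it open. The paper's trivalent reduction sidesteps this bookkeeping at the cost of a combinatorial decomposition and a separate (only sketched) two-dimensional case.
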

\begin{proof} 
We have $\sum_{v\in e} \eps_{e,v} \xi_e=0$. Set $\xi_j:=\eps_{e_j,v} \xi_{e_j}$ for $e_1,...,e_r$ an enumeration of the edges containing $v$.
\begin{figure}
\resizebox{0.6\textwidth}{!}{
\includegraphics{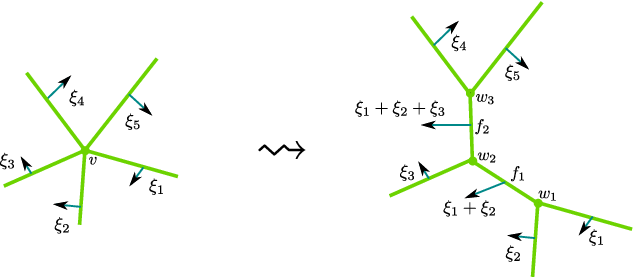}
}
\caption{Making a vertex trivalent by the insertion of new edges}
\label{Fig: make-trivalent}
\end{figure}
We decompose $v$ into trivalent vertices via insertion of $V-3$ new edges $f_1,...,f_{V-3}$ meeting the existing edges in the configuration depicted in Figure~\ref{Fig: make-trivalent}.
Precisely, we replace $v$ by a chain of new edges $f_1,...,f_{V-3}$ such that the ending point of $f_j$ is the starting points of $f_{j+1}$. Let $w_1,...,w_{V-2}$ denote the vertices in this chain.
We arrange it so that $w_1$ meets $e_1,e_2$, $w_2$ meets $e_3$, $w_3$ meets $e_4$ and so forth, finally $w_{V-2}$ meets $e_{V-1},e_V$. 
The edge $f_j$ is decorated with the section $\xi_1+...+\xi_{j+1}$. One checks that at each vertex $w_j$ the balancing condition holds.
One also checks that the new tropical curve is homologous to the original one. Adding boundaries of suitable 2-cycles, we can successively slide down the edges $e_3$, $e_4$,... to $w_1$. In this process the sections along $f_1,...,f_{V-3}$ get modified and when all $e_j$ have been moved to the first vertex, the sections of the $f_j$ are all trivial and so we end up in the original setup setting $w_1=v$. Similarly, one checks that the associated $n$-cycles to the original and modified $\beta_\trop$ are seen to be homologous. Hence 
$$\int_{\Gamma_v}\Omega =  \int_{\Gamma_{w_1}}\Omega +\ldots + \int_{\Gamma_{w_{V-2}}}\Omega.$$
It is not hard to see that we have reduced the assertion to the case where $v$ is trivalent. So we assume $V=3$ now.
As before, set $\xi_j:=\eps_{e_j,v} \xi_{e_j}$ for $j=1,2,3$. 
By the balancing condition, the saturated integral span $V$ of $\xi_1,\xi_2,\xi_3$ has either rank one or two.
In either case, we have a product situation where we can split $\Lambda_v\cong V\oplus W$ which yields a split of the torus
$$\Hom(\Lambda_v,S^1)\cong (V\otimes_\ZZ\RR)/V \times (W\otimes_\ZZ\RR)/W$$
and $\Gamma_v$ also splits as $\bar\Gamma_v\times (W\otimes_\ZZ\RR)/W$. Integration over $(W\otimes_\ZZ\RR)/W$ yields a factor of $(2\pi i)^{n-1}$ when $V$ is one-dimensional and $(2\pi i)^{n-2}$ when $V$ is two-dimensional.
We may thus assume that $\Lambda_v=V$.

We do the one-dimensional case first. 
Let $e$ be a primitive generator of $V$ and $\xi_j=a_j e$. We have $a_1+a_2+a_3=0$. 
Now, $\Gamma_v$ is a union of intervals that connect the points given by $\frac1{a_1}\ZZ$, $\frac1{a_2}\ZZ$, $\frac1{a_3}\ZZ$ on $\RR/\ZZ$. 
We want to show that the signed area of $\Gamma_v$ is $\frac12(2\pi i)$.
We may assume that $a_1,a_2,a_3$ are pairwise coprime, since the non-coprime case is a finite cover of the coprime case and the relative area of $\Gamma_v$ over the entire circle doesn't change when going to the cover.
So let $a_1,a_2,a_3$ be pairwise coprime. The points on the circle are then all distinct except for the origin. 
At the origin, we have three points with both signs appearing, so after sign cancellation, we have distinct points everywhere. 
The signs of the points alternate. This implies that $\Gamma_v$ consists of every other interval between the points. Furthermore these interval all carry the same sign.
The set of points is symmetric under the involution $\iota: x\mapsto -x$. 
However, $\iota$ takes $\Gamma_v$ to its complement, so up to sign $\Gamma_v$ and $\iota(\Gamma_v)$ have the same area. 
Since
$$2\pi i=\int_{\Gamma_v}d\theta + \int_{-\iota(\Gamma_v)}d\theta,$$
we conclude that $\int_{\Gamma_v}d\theta = \frac12(2\pi i)$.
The case when $V$ is two-dimensional works similar.
\end{proof}

%===========================================================

\subsection{Integration over the slab-add-in $\Gamma_v$}
\label{section-slab-integral}
We assume the setup and notation of \S\ref{sec-beta-slab}.  We
filled in the two cylinders $\Gamma_p$ and $\Gamma_{b'}$ at the
slab. We have $\int_{\Gamma_{b'}}\Omega =0$ by
\S\ref{section-wall-integral}.  We parametrize $\Gamma_p$ by
$\phi:T_e\times [0,1]\to U$ given as
\[
(e^{i\theta_1},\ldots,e^{i\theta_n},\lambda) \longmapsto
\left(\left(\frac{s'_1}{s_1}a
t^\kappa\frac{r_1'}{r_1}\right)^\lambda
r_1e^{i\theta_1},
\left(\frac{s'_2}{s_2}\frac{r_2'}{r_2}\right)^\lambda
r_2 e^{i\theta_2},
\ldots,
\left(\frac{s'_n}{s_n}\frac{r_n'}{r_n}\right)^\lambda
r_n e^{i\theta_n}\right).
\]
Note that 
\[
\dlog\left(\left(\frac{s'_j}{s_j}\frac{r_j'}{r_j}\right)^\lambda r_j
e^{i\theta_j}\right)=
\log\left(\frac{s'_j}{s_j}\frac{r_j'}{r_j}\right)d\lambda +
id\theta_j.
\]
We compute
\[
\begin{array}{rcl}
\phi^*\Omega &=& \displaystyle\log\left(\frac{s'_1}{s_1}a
t^\kappa\frac{r_1'}{r_1}\right) i^{n-1} d\lambda \wedge
d\theta_2\wedge\ldots\wedge d\theta_n +\ldots\\
&& + \displaystyle\log\left(\frac{s'_n}{s_n}\frac{r_n'}{r_n}\right)
i^{n-1}  d\theta_1\wedge\ldots\wedge d\theta_{n-1}\wedge
d\lambda.\\[4mm]
&=& \displaystyle\log\left(a t^\kappa\right) i^{n-1} d\lambda \wedge
d\theta_2\wedge\ldots\wedge d\theta_n \\
&&+ \displaystyle\sum_{j=1}^n
(-1)^{j+1}\log\left(\frac{s'_j}{s_j}\frac{r_j'}{r_j}\right) i^{n-1}
d\lambda\wedge
d\theta_1\wedge\ldots\wedge\widehat{d\theta_j}\wedge\ldots\wedge
d\theta_{n-1}.\\
\end{array}
\]
Integrating out $\lambda$ and Lemma~\ref{common-lemma} yield
\begin{equation}
\label{slab-integral}
\begin{array}{rcl}
\displaystyle\int_{\Gamma_p}\Omega=\int_{T_e} \int_{\lambda}
\phi^*\Omega
&=& \displaystyle (2\pi i)^{n-1}\log\left(a t^\kappa\right) \langle
v_1^*,\xi_e \rangle \\
&&+(2\pi i)^{n-1} \displaystyle\sum_{j=1}^n
\log\left(\frac{s'_j}{s_j}\frac{r_j'}{r_j}\right) \langle
v_j^*,\xi_e\rangle.
\end{array}
\end{equation}
We record for later use that evaluating the logarithms, we find
among the summands
\begin{equation}
\label{record-whats-to-be-cancelled}
(2\pi i)^{n-1} \displaystyle\sum_{j=1}^n \log (r_j')\langle
v_j^*,\xi_e \rangle \,-\, (2\pi i)^{n-1} \displaystyle\sum_{j=1}^n
\log (r_j)\langle v_j^*,\xi_e \rangle.
\end{equation}
Furthermore using \eqref{s-in-terms-of-sj}, we find
\[
\sum_{j=1}^n \log\left(\frac{s'_j}{s_j}\right) \langle
v_j^*,\xi_e\rangle = \log
\left(\frac{s_{\fob,\sigma_{\fou'}}(\xi)}{
s_{\fob,\sigma_{\fou}}(\xi)}\right). 
\]
As $\fob,\fou, \frak{u'}$ and $a$ are determined by $p$, we
may shortcut
\begin{equation}
\label{def-sp}
s_p:=a
\frac{s_{\rho_\fob,\sigma_{\fou'}}(\xi)}{s_{\rho_\fob,\sigma_{\fou}}
(\xi)}:\Lambda_p\lra \CC^{*}
\end{equation}
if $e$ traverses from $\fou$ to $\fou'$ at $p$ and we define
$s_p$ to be the inverse of \eqref{def-sp} if $e$ traverses in the
opposite direction.

%===========================================================

\subsection{Integration over part of an edge}
\label{section-chamber-integral}
Let $c$ be a connected part of an edge $e$ of $\beta$ that lies in
the interior of a maximal cell $\sigma=\sigma_{\fou}$ of $\P$ and is
contained in the closure of a chamber $\fou$. As in
\S\ref{section-slab-integral}, we parametrize $T_e\cdot S(c)$ by
$\phi: T_e \times [0, 1] \ra\Spec \CC[\Lambda_\sigma]$ so that
$\phi(x,[0,1])$ is co-oriented with $e$. In the coordinates
$z^{v_1},\ldots,z^{v_n}$ let $(r_1,\ldots,r_n)$ and
$(r_1',\ldots,r_n')$ be the starting and ending points,
respectively, of the lift of $c$ under the section of the moment
map. From the calculation in \S\ref{section-slab-integral}, by
setting $s_i=s_i'=a=1$ and $\kappa=0$, we obtain
\[
\int_{T_e\cdot S(c)}\Omega =  (2\pi i)^{n-1}
\displaystyle\sum_{j=1}^n \log (r_j')\langle v_j^*,\xi_e \rangle
\,-\, (2\pi i)^{n-1} \displaystyle\sum_{j=1}^n \log (r_j)\langle
v_j^*,\xi_e \rangle.
\]

%===========================================================

\subsection{Integration in a neighbourhood of a vertex}
Let $W\in\shW$ be a neighbourhood containing a vertex $v$ of $\beta$
and let $U\in\shU$ be the corresponding open set in $\shX$. 
The cycle $\beta_U:=\beta\cap U$ consists of the vertex-filling
cycle $\Gamma_v$ and various cylinders $\Gamma_{c_k}$, $k=1,\ldots,r$
one for each edge $e_k$ meeting $v$. We may thus compute
$\int_{\beta_U}\Omega$ by adding the results from
\S\ref{section-vertex-integral} and
\S\ref{section-chamber-integral}. Say the moment map lift of $v$ has
coordinates $(r_1,\ldots,r_n)$ and the other endpoint of the lift of
$c_k$ is $(r'_1(k),\ldots,r'_n(k))$ then
\begin{eqnarray*}
\int_{\beta_U}\Omega &= &\int_{\Gamma_v}\Omega
+\sum_{k=1}^r \int_{\Gamma_{c_k}}\Omega \\
&=& (2\pi i)^n a_v +(2\pi i)^{n-1} \sum_{k=1}^r \eps_{e_k,v}
\Big(\sum_{j=1}^n  \log (r'_j(k))\langle v_j^*,\xi_e \rangle
-\sum_{j=1}^n  \log (r_j)\langle v_j^*,\xi_e \rangle\Big).
\end{eqnarray*}
Using the balancing condition \eqref{balancing}, this reduces to
\[
\int_{\beta_U}\Omega = (2\pi i)^n a_v + (2\pi i)^{n-1} \sum_{k=1}^r
\eps_{e_k,v} \sum_{j=1}^n \log (r'_j(k))\langle v_j^*,\xi_e \rangle.
\]

%===========================================================

\subsection{Integration in the neighbourhood of a wall}
Let $W\in\shW$ be a neighbourhood containing a wall point $p$ of
$\beta$ and let $U\in\shU$ be the corresponding neighbourhood of
$\shX$.  The cycle $\beta_U:=\beta\cap U$ consists of the
wall-point-filling cycle $\Gamma_v$ and two cylinders $\Gamma_{c_1}$,
$\Gamma_{c_2}$ for $c_1,c_2\subset e$ for $e$ the edge of $\beta$
that contains $p$. Say $c_1,c_2$ have the same orientation as $e$
and are ordered by the orientation as well. Since the integral over
$\Gamma_p$ is zero by \S\ref{section-wall-integral} and since the
endpoint of $c_1$ coincides with the starting point of $c_2$, using
\S\ref{section-chamber-integral} and cancellation at $p$, we obtain
\[
\int_{\beta_U}\Omega = (2\pi i)^{n-1} \displaystyle\sum_{j=1}^n \log
(r_j')\langle v_j^*,\xi_e \rangle \,-\, (2\pi i)^{n-1}
\displaystyle\sum_{j=1}^n \log (r_j)\langle v_j^*,\xi_e \rangle.
\]
with $(r_1,\ldots,r_n)$ the starting point of $c_1$ and
$(r'_1,\ldots,r'_n)$ the endpoint of $c_2$.

%===========================================================

\subsection{Proof of Theorem~\ref{period-computed}}
We have 
\[
\int_\beta\Omega =\sum_{U\in\shU}\int_{\beta\cap U} \Omega.
\]
We computed the summands in the previous section and just need to
add the results. We show that all terms of the form
\[
(2\pi i)^{n-1} \displaystyle\sum_{j=1}^n \log (r_j)\langle
v_j^*,\xi_e \rangle
\]
with varying $(r_1,\ldots,r_n)$ cancel.  We claim that whenever to
cylinders $\Gamma_{c_1}$, $\Gamma_{c_2}$ share an endpoint, the
corresponding terms for these endpoints carry opposite sign and thus
cancel.  This is in fact easy to see if $c_1,c_2$ lie in the same
maximal cell $\sigma$ as the coordinates $z^{v_1},\ldots,z^{v_n}$
that we used to write down $(r_1,\ldots,r_n)$ with $v_j$ a basis of
$\Lambda_\sigma$ extend to all open sets of the form $U\cap X_0$
with $U$ meeting no other component of $X_0$ than $X_\sigma$. It
remains to take a closer look at a slab $\fob$ where a
transition from $\Lambda_{\sigma}$ to $\Lambda_{\sigma'}$ occurs
($\sigma\neq\sigma'$).  Here, we can identify $\Lambda_{\sigma'}$
with ($\sigma\neq\sigma'$) via parallel transport through
$\fob$. Then \eqref{record-whats-to-be-cancelled} implies the
desired cancellation there as well.

Now Theorem~\ref{period-computed} directly follows up to signs. To
check the signs note that $d_p$ in the statement of the theorem
points in the opposite direction as $v_1^*$ in the definition of the
slab add-in.  This induces a negative sign that cancels with the
negative sign in the multiplication with $-2\pi i$ in the definition
of $h_\beta$. Hence, we are done.

%===========================================================
%
%		Tropical cycles generate all cycles
%
%===========================================================

\section{Tropical cycles generate all cycles}
\label{sec-W2W0}
\begin{theorem} 
\label{main-affine-result}
Let $(B,\P)$ be an oriented tropical manifold. We have
\begin{align}
H^{n-1}(B,i_*\Lambda)&= H_{1}(B,\partial B;i_*\Lambda)
\label{main-affine-result-1}\\
H_{1}(B,\partial B;i_*\Lambda) &= H_{1}^{\P^{\bary}}(B,\partial
B;i_*\Lambda) \label{main-affine-result-2}
\end{align}
\end{theorem}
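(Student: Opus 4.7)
\medskip
\noindent\textbf{Plan of proof.}
Both equalities are comparison theorems for the constructible sheaf $i_*\Lambda$ on $B$, and both exploit the fact that the barycentric subdivision $\P^{\bary}$ refines the natural stratification cut out by the singular locus $\Delta$: by construction $\Delta$ lies in the codimension-two skeleton of $\P^{\bary}$, so every open simplex of $\P^{\bary}$ of positive dimension is contained in $B^\circ := B\setminus\Delta$, where $i_*\Lambda = \Lambda$ is a genuine local system.

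For \eqref{main-affine-result-1}, I would invoke Poincar\'e--Lefschetz duality. The orientation of $B$ furnishes a fundamental class $[B,\partial B]\in H_n(B,\partial B;\ZZ)$, and cap product with this class yields a natural comparison map
\[
\cap[B,\partial B]\colon H^{n-1}(B,i_*\Lambda)\lra H_1(B,\partial B;i_*\Lambda).
\]
To show this is an isomorphism I would pass to the cellular descriptions of both sides coming from $\P^{\bary}$ and its dual cell decomposition. Every dual cell of positive dimension is disjoint from $\Delta$, so $i_*\Lambda$ restricts to a local system on each such cell, and cell by cell the cap product reduces to the classical Poincar\'e pairing between a cell and its dual with local-system coefficients, which is a standard isomorphism.

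For \eqref{main-affine-result-2}, I would identify the simplicial chain complex
$C_*^{\P^{\bary}}(B,\partial B;i_*\Lambda)$, whose degree-$k$ piece is $\bigoplus_{\dim\tau=k}\Gamma(\tau, i_*\Lambda)$ with boundary maps given by parallel transport along faces (which makes sense precisely because the open faces of positive dimension all lie in $B^\circ$), with a model for sheaf homology $H_*(B,\partial B;i_*\Lambda)$. This is the standard simplicial-versus-sheaf comparison for a constructible sheaf whose stratification is refined by the triangulation, implemented for instance through a \v{C}ech-to-sheaf spectral sequence associated to the cover of $B$ by open stars of the vertices of $\P^{\bary}$, which degenerates because the sheaf is acyclic on each open star.

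The main technical obstacle is \eqref{main-affine-result-1}: Poincar\'e--Lefschetz duality is cleanest for locally constant sheaves, whereas $i_*\Lambda$ is merely constructible. The geometric fact that rescues the argument is that $\Delta$ has \emph{real} codimension two, so the local (co)homology contributions of the singular stratum are concentrated in degrees that do not interfere with $H^{n-1}$ or $H_1$. Arranging the dual cellular complex so that the fundamental-class cap product is manifestly a quasi-isomorphism, and correctly tracking the behaviour along $\partial B$, is where the bulk of the bookkeeping lies; everything else is standard.
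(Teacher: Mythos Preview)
Your outline is in the right spirit, but it diverges from the paper's arguments in both parts, and your sketch for \eqref{main-affine-result-2} contains a genuine gap.

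For \eqref{main-affine-result-1} the paper does not argue via cap product with a fundamental class. Instead (Theorem~\ref{cor-iso-ho-coho}) it takes the \v{C}ech complex for the cover of $B$ by small neighbourhoods $U_\tau$ of the \emph{maximal} simplices, filters it by the codimension of the unique largest cell contained in each $U_I$, and shows (Lemma~\ref{lemma-gradedcoho}) that the associated graded in codimension $k$ has cohomology concentrated in degree $k$, equal to $\bigoplus_{\codim\tau=k,\,\tau\not\subset\partial B}\Gamma(\tau,\Lambda)$. The spectral sequence thus degenerates at $E_2$, and Proposition~\ref{prop-map-ho-coho} identifies the $E_1$-differential with the simplicial boundary. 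This works in \emph{all} degrees and for any constructible sheaf satisfying Setup~\ref{setup-acyclic}; the codimension-two nature of $\Delta$ is irrelevant, contrary to your diagnosis of where the difficulty lies. Your dual-cell picture is morally equivalent---the \v{C}ech combinatorics of intersections of the $U_\tau$ \emph{is} the dual cell structure---but the paper's implementation is explicit at the chain level and sidesteps having to define the cap product for constructible coefficients.

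For \eqref{main-affine-result-2} your proposed \v{C}ech-to-sheaf spectral sequence for the open-star cover computes \emph{cohomology}, not homology, so it does not by itself yield the comparison of simplicial and singular homology that is being asserted. The paper's argument (Theorem~\ref{simplicial=singular}) is quite different: it filters the sheaf by $\shF^k=(j_k)_!j_k^*\shF$, where $j_k$ is the inclusion of the complement of the $(k-1)$-skeleton, so that each graded piece is a direct sum of lower-shriek extensions of constant sheaves from single open cells. Via Lemma~\ref{reducetoGr} and the five-lemma one is reduced to these graded pieces, and then by excision and retraction onto closed stars of barycenters to the classical simplicial-versus-singular comparison with \emph{constant} coefficients.

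A minor slip: it is not true that every open simplex of $\P^{\bary}$ of positive dimension lies in $B\setminus\Delta$; for $n\ge 3$ the locus $\Delta$ typically contains simplices of $\P^{\bary}$ of every dimension up to $n-2$. What is true, and what both arguments actually use, is that $i_*\Lambda$ is $\P^{\bary}$-constructible and acyclic on the relevant open sets.
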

\begin{proof}
\eqref{main-affine-result-1} is Theorem~\ref{cor-iso-ho-coho} and
\eqref{main-affine-result-2} is Theorem~\ref{simplicial=singular}.
\end{proof}

\begin{theorem} 
\label{shape-of-cycles}
Let $(B,\P)$ be a tropical manifold.  Any element in
$H_{1}(B,\partial B;i_*\Lambda)$ can be represented by a tropical
$1$-cycle.
\end{theorem}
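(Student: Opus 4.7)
The plan is to realize any relative homology class by a simplicial cycle on the barycentric subdivision and then to isotope it into the geometric shape required by Definition~\ref{def-trop-1-cycle}. By Theorem~\ref{main-affine-result}(2), every class in $H_1(B,\partial B;i_*\Lambda)$ has a representative $c = \sum_i \xi_i \otimes e_i$, where each $e_i$ is an edge of $\P^{\bary}$ and each $\xi_i$ is a local section of $i_*\Lambda$ on a neighborhood of $e_i$. Since $\Delta$ is chosen (see \S\ref{subsec-intcomplex}) to be contained in simplices of $\P^{\bary}$ disjoint from the barycenters of maximal cells and from the vertices of $\P$, each $\xi_i$ can be identified canonically with a flat integral tangent vector along the smooth part of $e_i$.

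Read at a vertex $v$ of the underlying graph that is not on $\partial B$, the simplicial cycle condition says that $\sum_{v\in e_i}\eps_{e_i,v}\,\xi_i|_v = 0$ in the stalk $(i_*\Lambda)_v$. This is precisely the balancing condition~\eqref{balancing}, once all labels are identified in a common stalk of $\Lambda$. To produce an honest tropical $1$-cycle from $c$, I would perturb the underlying graph: move each interior vertex slightly off $\P^{[n-1]}$ into the interior of a maximal cell of $\P$ (staying away from $\Delta$), keep any boundary vertices on $\partial B$, and isotope the edges to piecewise-linear paths that meet $\P^{[n-1]}$ only transversely and only in the relative interior of codimension-one cells. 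Because $\Lambda$ is a local system on $B\setminus\Delta$, the labels $\xi_i$ parallel-transport canonically along the perturbation, and the balancing condition persists since the stalks at $v$ before and after perturbation are canonically identified.

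Finally I would clean up the result so that it matches Definition~\ref{def-trop-1-cycle} literally: delete any edge with trivial label (homologically invisible), remove any resulting univalent interior vertex (where balancing forces the unique incident label to vanish), and, if needed, detach a higher-valent boundary vertex from $\partial B$ by replacing it with a nearby interior vertex joined to $\partial B$ by a single pendant edge. What remains is a tropical $1$-cycle representing the original class.

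The main obstacle is the bookkeeping showing that the simplicial cycle condition in the constructible sheaf $i_*\Lambda$, which a priori involves generization maps between stalks of $i_*\Lambda$ at cells of different dimensions, reduces after the perturbation to a pointwise identity in a single $\Lambda$-stalk. This reduction depends on the placement of $\Delta$ recalled in \S\ref{subsec-intcomplex}, which guarantees that the barycenters of maximal cells and the edges of $\P^{\bary}$ emanating from them lie in the locus where $i_*\Lambda$ restricts to the local system $\Lambda$; only there can the $\xi_i|_v$ really be compared in a single stalk, so the perturbation has to be set up to respect this placement.
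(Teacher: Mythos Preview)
Your proposal is correct and follows essentially the same approach as the paper: represent the class by a simplicial $1$-cycle on $\P^{\bary}$ via Theorem~\ref{main-affine-result}(2), then perturb vertices into interiors of maximal cells (keeping boundary vertices on $\partial B$ and univalent) while avoiding $\Delta$, and parallel-transport the labels. Your write-up is in fact more careful than the paper's, which is quite terse; in particular your explicit cleanup steps (deleting trivially labelled edges, fixing boundary valency) and your discussion of why the simplicial closedness condition in $i_*\Lambda$ becomes the balancing condition in a single $\Lambda$-stalk after perturbation fill in points the paper leaves implicit.
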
 
\begin{proof} 
By Theorem~\ref{main-affine-result}, (2) we are dealing with an
element of $H_{1}^{\P^{\bary}}(B,\partial B;i_*\Lambda)$ which is a
similar object as a tropical $1$-cycle. However, its vertices are
the barycenters of $B$.  We can ignore edges in $\partial B$.  We
perturb the vertices by adding the boundary of a $2$-chain of
$H_{1}(B,\partial B;i_*\Lambda)$ so that the vertices that were in
$\partial B$ stay in $\partial B$ and become univalent and the
vertices in the interior of $B$ all move into the maximal cells of
$B$ so that the new edges between them do not meet $\Delta$.
\end{proof}

For a treatment of to log-differential forms in our setup, see
\cite{logmirror2}.  In short, let $\shZ$ denote the locus in $\shX$
where $f$ is log-singular and $j:\shX\setminus \shZ\hra \shX$ the
inclusion of its complement. One also denotes by $j:X_0\setminus
(\shZ\cap X_0)\hra X_0$ the analogous version on $X_0$.

\begin{theorem}[Base change for $H^{1,n-1}$] 
Assume that $B$ is simple. We have that
$H^{n-1}(\shX, j_*\Omega_{\shX^\dagger/\Tbs^\dagger})$ is a free
$\shO_\Tbs$-module and its formation commutes with base change, so
in particular
\[
H^{n-1}(X_0,j_*\Omega_{X_0^\dagger/0^\dagger})\cong
H^{n-1}(X_t,\tilde\Omega_{X_t})
\]
for $t\neq 0$ where $\tilde\Omega_{X_t}$ are the
Danilov-differentials, i.e. the pushforward of the usual
differentials from the non-singular locus of $X_t$.  Furthermore,
there is a canonical isomorphism
\[
H^{n-1}(X_0,j_*\Omega_{X_0^\dagger/0^\dagger})=
H^{n-1}(B,i_*\Lambda\otimes_\ZZ\CC).
\]
\end{theorem}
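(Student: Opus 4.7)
The statement aggregates three Hodge-theoretic facts for toric degenerations under the simplicity hypothesis, and the plan is to assemble them from \cite{logmirror2,Ru10} rather than to redevelop the underlying Hodge machinery. Reading $\Omega$ in the statement as $\Omega^{n-1}$ (the degree for which the orientation-induced duality below lands in $\Lambda$), the three pieces to establish are the central-fibre identification with affine cohomology, the generic-fibre identification with Danilov differentials, and the local freeness and base-change for the family on $\Tbs$.

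First I would establish the central-fibre identification via the continuous moment-map-type projection $\pi:X_0\to B$. Under simplicity, the sheaf analysis of \cite{logmirror2} provides a description of $R\pi_*j_*\Omega^p_{X_0^\dagger/0^\dagger}$: away from $\Delta$ the stalk is $\wedge^p\check\Lambda\otimes\CC$ placed in degree zero, and the contributions supported along $\Delta$ are controlled by the elementary-simplex conditions of simplicity so that the resulting Leray spectral sequence collapses. This yields
\[
H^q\bigl(X_0, j_*\Omega^p_{X_0^\dagger/0^\dagger}\bigr) \;\cong\; H^q\bigl(B, i_*\wedge^p\check\Lambda\otimes\CC\bigr).
\]
The orientation of $B$ trivializes $\det\check\Lambda$ and yields a canonical isomorphism $\wedge^{n-1}\check\Lambda\cong\Lambda$; specializing to $p=q=n-1$ produces the asserted isomorphism with $H^{n-1}(B, i_*\Lambda\otimes\CC)$.

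For the generic fibre, $X_t$ with $t\neq 0$ is smooth outside a codimension-two locus on which the log structure of $\shX^\dagger$ is trivial, so the restriction of $j_*\Omega^{n-1}_{\shX^\dagger/\Tbs^\dagger}$ to $X_t$ agrees with the Danilov sheaf $\tilde\Omega_{X_t}$. Freeness and base change for $R^{n-1}f_*j_*\Omega_{\shX^\dagger/\Tbs^\dagger}$ then reduce to two inputs: $E_1$-degeneration of the logarithmic Hodge-to-de~Rham spectral sequence for $\shX^\dagger\to\Tbs^\dagger$, proved in \cite{logmirror2} under simplicity, and constancy of fibre dimensions. The latter is obtained by matching the central-fibre dimension from the previous step with the nearby-fibre Danilov dimension, the equality being precisely the mirror-Hodge-number identity of \cite{logmirror2} and \cite{Ru10}. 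Constancy plus $E_1$-degeneration then forces local freeness and base change by standard semicontinuity.

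The main obstacle is that the two genuinely nontrivial inputs — $E_1$-degeneration and the identification of the graded Hodge numbers with affine cohomology of $B$ under simplicity — are substantive outputs of the Hodge theory of toric degenerations that are not reproved here but imported from \cite{logmirror2,Ru10}. The remaining work is to thread the orientation-induced duality $\wedge^{n-1}\check\Lambda\cong\Lambda$ through the identifications and to verify that the specialization map from the formal family to the generic fibre is compatible with the sheaf-theoretic identifications on both sides; these are routine compatibility checks once the underlying framework is granted.
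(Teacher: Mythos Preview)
Your overall strategy—importing the heavy Hodge-theoretic inputs from \cite{logmirror2,Ru10} rather than redeveloping them—matches the paper's. But there is a concrete misreading that makes one of your steps wrong and another unnecessary.

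The sheaf $\Omega$ in the statement is $\Omega^1$, not $\Omega^{n-1}$: the title ``Base change for $H^{1,n-1}$'' means the Hodge component $(p,q)=(1,n-1)$, i.e.\ $H^{n-1}$ of $j_*\Omega^1$. Your orientation-induced duality $\wedge^{n-1}\check\Lambda\cong\Lambda$ is therefore a fix for a problem that is not there. The paper resolves the apparent $\Lambda$ versus $\check\Lambda$ discrepancy differently: the references \cite{logmirror2,Ru10} work with the \emph{dual} intersection complex, so what they call $\check\Lambda$ is our $\Lambda$. After this convention swap, \cite[Thm~1.11]{Ru10} gives directly an injection $H^{n-1}(B,i_*\Lambda\otimes\CC)\hookrightarrow H^{n-1}(X_0,j_*\Omega^1_{X_0^\dagger/0^\dagger})$, and \cite[Thm~3.22]{logmirror2} says it is an isomorphism under simplicity. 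No wedge-power duality or Leray collapse argument is needed.

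For base change, your route via $E_1$-degeneration plus semicontinuity is defensible but more circuitous than the paper's. The paper cites \cite[Thm~4.1]{logmirror2} for base change of the full hypercohomology $\HH^n(\shX,j_*\Omega^\bullet_{\shX^\dagger/\Tbs^\dagger})$, and then invokes \cite[Thm~1.1(b)]{Ru10} to identify $H^{n-1}(B,i_*\Lambda\otimes\CC)$ as a graded piece of the stupid filtration on this hypercohomology. Freeness and base change for the graded piece follow immediately; there is no separate dimension-matching or semicontinuity step.
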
 

\begin{proof}
We first prove the second assertion.  Note that in the references we
are going to cite, $B$ denotes the dual intersection complex instead
of the intersection complex, hence our $\Lambda$ will be
$\check\Lambda$ in the references. We already make this adaption
upon citing. By \cite[Thm 1.11]{Ru10} there is an injection
\begin{equation}
\label{injection-affine-to-log}
H^{n-1}(B,i_*\Lambda\otimes_\ZZ\CC)\hra
H^{n-1}(X_0,j_*\Omega_{X^\dagger_0/0^\dagger})
\end{equation}
which is an isomorphism if $B$ is simple by \cite[Theorem
3.22]{logmirror2}. For the first assertion, by \cite[Theorem
4.1]{logmirror2}, 
\[
H:=\HH^{n}(\shX,j_*\Omega^\bullet_{\shX^\dagger/\Tbs^\dagger})
\] 
satisfies base change. By \cite[Thm 1.1 b)]{Ru10},
$H^{n-1}(B,i_*\Lambda\otimes_\ZZ\CC)$ is a sub-quotient of $H$, a
graded piece of the stupid filtration. This implies the statement.
\end{proof}

Let $W_\bullet$ denote the monodromy weight filtration of $f$ on
$H_n(X_t,\QQ)$ for $t\neq 0$, see \cite[2.4]{De93}. The vanishing
$n$-cycle $\alpha$ generates $W_0$. Analogously one obtains a
filtration on $H^n(X_t,\QQ)$ transforming into the previous one
under Poincar\'e duality.
\begin{theorem} If $B$ is simple then
\label{thm-identifyW2W1}
$W_2/W_1 = H^{n-1}(B,i_*\Lambda\otimes_\ZZ\CC)$.
\end{theorem}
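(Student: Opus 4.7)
The plan is to identify both sides of the asserted equality with the single complex vector space $H^{n-1}(X_0, j_*\Omega^1_{X_0^\dagger/0^\dagger})$. The preceding theorem already gives $H^{n-1}(X_0, j_*\Omega^1_{X_0^\dagger/0^\dagger}) \cong H^{n-1}(B, i_*\Lambda \otimes_\ZZ \CC)$ under simplicity, so the real content is to produce a canonical isomorphism $W_2/W_1 \cong H^{n-1}(X_0, j_*\Omega^1_{X_0^\dagger/0^\dagger})$.

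The argument splits into two Hodge-theoretic inputs. First, the logarithmic de Rham complex $j_*\Omega^\bullet_{X_0^\dagger/0^\dagger}$ computes the nearby cohomology $H^n(X_t, \CC)_{\lim}$, and by \cite[Theorem 4.1]{logmirror2} the associated Hodge--de Rham spectral sequence degenerates at $E_1$. The stupid filtration therefore descends to the limiting Hodge filtration $F^\bullet$ and yields $\Gr^F_p H^n(X_t,\CC)_{\lim} \cong H^{n-p}(X_0, j_*\Omega^p_{X_0^\dagger/0^\dagger})$; specializing to $p = 1$ gives $\Gr^F_1 H^n_{\lim} \cong H^{n-1}(X_0, j_*\Omega^1_{X_0^\dagger/0^\dagger})$. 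Second, I would invoke the Hodge--Tate property of the limit MHS on $H^n$, which under simplicity follows from the Hodge number computations of \cite{logmirror2} and \cite{Ru10}: each $\Gr^W_{2k} H^n_{\lim}$ is pure of type $(k,k)$ and $\Gr^W_{2k+1} H^n_{\lim} = 0$. Hence $\Gr^F_p \Gr^W_{2k} H^n_{\lim}$ vanishes unless $p = k$, and summing over the weight filtration collapses $\Gr^F_p H^n_{\lim}$ onto $\Gr^W_{2p} H^n_{\lim}$. In particular $W_1 = W_0$, and
\[
W_2/W_1 \;=\; \Gr^W_2 H^n_{\lim} \;=\; \Gr^F_1 H^n_{\lim} \;\cong\; H^{n-1}(X_0, j_*\Omega^1_{X_0^\dagger/0^\dagger}),
\]
after which concatenation with the preceding theorem finishes the identification.

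The main obstacle is not a new computation but the correct packaging of the inputs from \cite{logmirror2} and \cite{Ru10}: confirming that the simplicity hypothesis forces the Hodge--Tate property on the weight-$n$ limit MHS, and that the comparison between the limit Hodge filtration on $H^n(X_t)_{\lim}$ and the stupid filtration on $j_*\Omega^\bullet_{X_0^\dagger/0^\dagger}$ is the canonical one. Once these are accepted, the Hodge--Tate vanishing of off-diagonal types collapses the weight and Hodge filtrations onto one another in the stated way, and no further geometric input is required beyond what the previous theorem already supplied.
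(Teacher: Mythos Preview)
Your approach is valid but takes a different route from the paper's. The paper works directly with the nilpotent operator $N=\log T$: under simplicity one has $H^n=\bigoplus_{p+q=n} H^q(B,i_*\bigwedge^p\Lambda\otimes_\ZZ\CC)$, and by \cite[Theorem~5.1]{logmirror2} the operator $N$ acts as cup product with the radiance obstruction class, shifting $(p,q)\mapsto(p-1,q+1)$. The monodromy weight filtration is then identified by verifying that the maps $N^{n-2p}$ are isomorphisms, which the paper does by recognizing $N$ as the Lefschetz operator on the mirror and invoking Hard Lefschetz. Your argument instead cites the Hodge--Tate property of the limit MHS as a black box and combines it with $E_1$-degeneration to collapse $\Gr^W_{2p}$ onto $\Gr^F_p$.

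One point to sharpen: the phrase ``Hodge number computations'' is not quite sufficient justification. Knowing the dimensions $\dim\Gr^F_p H^n_{\lim}$ alone does not determine the weight filtration or force the Hodge--Tate property; one needs an independent identification of $W_\bullet$. In \cite{logmirror2} this identification is obtained precisely through the mirror Lefschetz argument the paper invokes. So both routes rest on the same deep input from \cite[Theorem~5.1]{logmirror2}; yours packages it as ``the limit MHS is Hodge--Tate'', the paper unpacks it as ``$N^{n-2}$ is an isomorphism via Hard Lefschetz on the mirror''. Neither gains substantially over the other, though the paper's version makes the role of mirror symmetry more visible.
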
 
\begin{proof}
If $T$ denotes the monodromy operator, we have that $N=\log T$ acts
on the subspace $\bigoplus_{p+q=n}
H^{q}(B,i_*\bigwedge^p\Lambda\otimes_\ZZ\CC)$ of $H$ as
\[
N:H^{q}(B,i_*\bigwedge^p\Lambda\otimes_\ZZ\CC)\lra
H^{q+1}(B,i_*\bigwedge^{p-1}\Lambda\otimes_\ZZ\CC)
\]
by cupping with the radiance obstruction class in
$H^1(B,i_*\check\Lambda)$, see \cite[Theorem~5.1]{logmirror2}. We
need to show that 
\begin{equation}
\label{N-power}
N^{n-2}:H^{1}(B,i_*\bigwedge^{n-1}\Lambda\otimes_\ZZ\CC)\lra
H^{n-1}(B,i_*\Lambda\otimes_\ZZ\CC)
\end{equation}
is an isomorphism. This follows from the mirror symmetry result
proven in \cite[Theorem~5.1]{logmirror2}: $N$ is the Lefschetz
operator on the mirror dual of $X_0$ for which \eqref{N-power} is
known to be an isomorphism by the Lefschetz decomposition theorem.
\end{proof}

\begin{corollary}[Generation of $W_2/W_0$] 
\label{genW2W0}
If $B$ is simple then tropical $1$-cycles generate $W_2/W_0$.
\end{corollary}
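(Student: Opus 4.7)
The plan is to chain together three results already established earlier in the excerpt, once we dispose of the distinction between $W_2/W_0$ and $W_2/W_1$. First I would observe that since the family $f\colon \shX\to \Tbs$ is a maximal degeneration of Calabi--Yau type, the limiting mixed Hodge structure on $H^n(X_t,\QQ)$ is of Hodge--Tate type by \cite{De93}, so the odd graded pieces of the monodromy weight filtration vanish; in particular $W_1/W_0 = 0$ and therefore $W_2/W_0 \cong W_2/W_1$. Passing to homology via Poincar\'e duality, the tropical cycles $\beta \in H_n(X_t,\ZZ)$ associated to $\beta_\trop$ in \S\ref{section-tropical-to-homology} sit in $W_2$, so it suffices to show they surject onto $W_2/W_1$.

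Next I would apply Theorem~\ref{thm-identifyW2W1}, which under the simplicity hypothesis provides the identification
\[
W_2/W_1 \;\cong\; H^{n-1}(B,i_*\Lambda\otimes_\ZZ\CC).
\]
Since $B$ is oriented (this is part of the simplicity setup inherited from \S\ref{subsec-simplicity}), part (1) of Theorem~\ref{enough-cycles} (equivalently \eqref{main-affine-result-1}) gives a canonical isomorphism
\[
H^{n-1}(B,i_*\Lambda\otimes_\ZZ\CC) \;\cong\; H_1(B,\partial B;i_*\Lambda\otimes_\ZZ\CC),
\]
and part (2) of Theorem~\ref{enough-cycles} together with Theorem~\ref{shape-of-cycles} shows that every class on the right-hand side is represented by an honest tropical $1$-cycle in the sense of Definition~\ref{def-trop-1-cycle}.

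The last step, and the point where I expect the main subtlety to lie, is checking that the composition
\[
C_1(B,i_*\Lambda) \;\longrightarrow\; H_1(B,\partial B;i_*\Lambda\otimes_\ZZ\CC) \;\xrightarrow{\cong}\; H^{n-1}(B,i_*\Lambda\otimes_\ZZ\CC) \;\xrightarrow{\cong}\; W_2/W_1
\]
agrees, up to the Poincar\'e duality sign, with the geometric construction $\beta_\trop \leadsto \beta$ of \S\ref{section-tropical-to-homology}. Concretely, both associations send the edge decoration $\xi_e \in \Gamma(e,\Lambda)$ to a class in the stalk of $i_*\Lambda$ at a point of $e$, and the projection $\pi_\beta \colon \beta \to \beta_\trop$ is an $(n-1)$-torus bundle whose fibre over $p \in e$ is Poincar\'e dual to $\xi_e$ in the local $n$-torus. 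Once this compatibility is in place, the surjectivity of $C_1(B,i_*\Lambda) \to H_1(B,\partial B;i_*\Lambda)$ from Theorem~\ref{shape-of-cycles} transports directly to surjectivity onto $W_2/W_1 = W_2/W_0$, which is exactly the statement of the corollary. The hard part is purely book-keeping: matching the Poincar\'e duality identification used in Theorem~\ref{thm-identifyW2W1} (via the radiance obstruction and the Lefschetz operator of \cite{logmirror2}) with the explicit torus-fibration construction of $\beta$, but no new geometric input beyond the three theorems cited is required.
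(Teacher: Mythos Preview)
Your argument is essentially the paper's own proof, which reads in full: ``We have $W_1=W_0$. Combine Theorem~\ref{thm-identifyW2W1} with Theorem~\ref{enough-cycles}.'' Two minor points: you have the labels of parts~(1) and~(2) of Theorem~\ref{enough-cycles} reversed (part~(2) is the cohomology--homology isomorphism, part~(1) the surjectivity from tropical cycles), and the compatibility check you flag in your final paragraph---that the geometric assignment $\beta_\trop\leadsto\beta$ matches the abstract chain of isomorphisms---is a genuine point the paper leaves implicit.
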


\begin{proof}
We have $W_1=W_0$. Combine Theorem~\ref{thm-identifyW2W1} with
Theorem~\ref{enough-cycles}.
\end{proof}

%===========================================================
%
%		Appendix: cohomology and homology of constructible sheaves
%
%===========================================================

\section{Appendix: cohomology and homology of constructible sheaves}

%===========================================================

\subsection{Identification of simplicial and singular homology with
coefficients in a constructible sheaf}
\label{general-constructible-sheaves}
We could not find the following results on constructible sheaves in
the literature, so we provide proofs here. Recall from
\cite[\S2.1]{Ha02} that a \emph{$\triang$-complex} is a CW-complex
where each closed cell comes with a distinguished surjection to it
from the (oriented) standard simplex with compatibility between
sub-cells and faces of the simplex.  For a $\triang$-complex $X$, we
use the notation $X=\coprod_{\tau\in T} \tau^\circ$ where $T$ is a
set of simplices for each of which we have the \emph{characteristic
map} $j_\tau:\tau\ra X$  that restricts to a homeomorphism on the
interior $\tau^\circ$ of $\tau$. Let $X=\coprod_{\tau\in
T}\tau^\circ$ be a $\triang$-complex. We say a sheaf $\shF$ on $X$
is \emph{$T$-constructible} if $\shF|_{\tau^\circ}$ is a constant
sheaf for each $\tau\in T$. Let $A=\coprod_{\tau\in S}\tau^\circ$ be
a (closed) subcomplex ($S\subseteq T$). 
\begin{definition}
\label{def-homology}
\begin{enumerate}
\item
We denote by $H_i^T(X,A;\shF)$ the relative simplicial homology with
coefficients $\shF$, i.e. it is computed by the differential graded
vector space 
\[
\bigoplus_{i\ge 0} C^T_i(X,A;\shF)\qquad \hbox{ where }\qquad
C^T_i(X,A;\shF)=\bigoplus_{{\tau\in T\setminus
S}\atop{\dim\tau=i}}\Gamma(\tau,j_\tau^*\shF)
\]
with the usual differential $\partial:C^T_i(X,A;\shF)\ra
C^T_{i-1}(X,A;\shF)$ whose restriction/projection to
$\Gamma(\sigma,j_\sigma^*\shF)\ra \Gamma(\tau,j_\sigma^*\shF)$ for
$\tau\subset\sigma$ a facet inclusion is given by the restriction
map multiplied by
\[
\eps_{\tau\subset\sigma}
=\left\{\begin{array}{ll}
+1, & n_\tau\wedge\ori_{\tau}=+\ori_\sigma \\
-1, & n_\tau\wedge\ori_{\tau}=-\ori_\sigma
\end{array}\right.
\]
where $\ori_{\tau},\ori_{\sigma}$ denote the orientation of
$\tau,\sigma$ respectively and $n_\tau$ is the outward normal of
$\sigma$ along $\tau$ (for $\tau$ a point, set
$n_\tau\wedge\ori_{\tau}:=n_\tau$).

\item
On the other hand, one defines $H_i(X,A;\shF)$, the singular
homology with with coefficients $\shF$, in the usual way (see for
example \cite[VI-12]{Br97}) where chains $C_i(X,A;\shF)$ are formal
sums over singular $i$-simplices in $X$ modulo singular
$i$-simplices in $A$.
\end{enumerate}
We denote by $\overrightarrow{C}_i(X,A;\shF)$ the direct limit of
$C_i(X,A;\shF)$ under the barycentric subdivision operator on
singular chains, see \cite[V-1.3]{Br97}. We write $C^T_i$ for
$C^T_i(X,A;\shF)$ and $C_i$ for $C_i(X,A;\shF)$ when the spaces and
the sheaf are unambiguous. We also write $\overrightarrow{C}_i$ for
$\overrightarrow{C}_i(X,A;\shF)$.
\end{definition}

\begin{lemma} 
\label{reduce-to-barycentric}
Let $X=\coprod_{\tau\in T} \tau^\circ$ be a $\triang$-complex, $A$ a
subcomplex and $\shF$ be a $T$-constructible sheaf on $X$. We denote
by $T^\bary$ the barycentric subdivision of $T$. Note that $\shF$ is
$T^\bary$-constructible. For any $i$, there is a natural isomorphism
\[
H^{T}_i(X,A;\shF) \lra H^{T^\bary}_i(X,A;\shF).
\]
\end{lemma}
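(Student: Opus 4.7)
The plan is to construct a natural chain map $\mu\colon C^T_\bullet(X,A;\shF)\to C^{T^\bary}_\bullet(X,A;\shF)$ implementing barycentric subdivision at the level of sheaf-valued simplicial chains, and then to prove it is a quasi-isomorphism by induction on skeleta. To define $\mu$, fix $\tau\in T\setminus S$ of dimension $i$ and let $\sigma_1,\ldots,\sigma_N$ with $N=(i+1)!$ be the $i$-dimensional simplices of $T^\bary$ whose open interior is contained in $\tau^\circ$. Because $\shF|_{\tau^\circ}$ is constant and $\sigma_j^\circ\subseteq\tau^\circ$, the inclusion $\iota_j\colon\sigma_j\hra\tau$ yields a canonical restriction
\[
\Gamma(\tau,j_\tau^*\shF)\;\longrightarrow\;\Gamma(\sigma_j,j_{\sigma_j}^*\shF),\qquad s\longmapsto s|_{\sigma_j}.
\]
Set
\[
\mu(s)\;=\;\sum_{j=1}^{N}\varepsilon_{\sigma_j,\tau}\,s|_{\sigma_j},
\]
with signs $\varepsilon_{\sigma_j,\tau}\in\{\pm 1\}$ comparing the vertex-ordering orientation of $\sigma_j$ with $\ori_\tau$, exactly as in the classical subdivision operator on simplicial chains with constant coefficients. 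Extending $\mu$ linearly in each degree produces the desired map.

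The fact that $\mu$ commutes with the simplicial boundaries reduces to two ingredients: the classical combinatorial identity of the subdivision operator (at the level of formal linear combinations of oriented simplices, as in Hatcher \S2.1) together with the compatibility of sheaf restrictions along face inclusions with the restrictions $s\mapsto s|_{\sigma_j}$; the latter is automatic since every restriction arises from pullback along an inclusion of closed subspaces. To show $\mu$ is a quasi-isomorphism, I would induct on the skeleta $X^{(k)}\subseteq X$, using the long exact sequences of pairs $(X^{(k)},X^{(k-1)}\cup A)$ and the five-lemma. This reduces the whole statement to the base case of a single pair $(\tau,\partial\tau)$ with coefficients in $j_\tau^*\shF$. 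On the left, $C^T_\bullet(\tau,\partial\tau;j_\tau^*\shF)$ is concentrated in degree $i$ with value $\Gamma(\tau,j_\tau^*\shF)$. On the right, the top chain group is $\bigoplus_j\Gamma(\sigma_j,j_{\sigma_j}^*\shF)$; an element of $\ker\partial^{T^\bary}$ is a family $(s_j)$ whose restrictions agree across the interior faces of the subdivision, which — by the sheaf axiom applied to the constant sheaf $\shF|_{\tau^\circ}$ — is the image under $\mu$ of a unique section of $j_\tau^*\shF$ on $\tau$. That the complex is acyclic in lower degrees then follows either from a contraction onto the barycenter $b_\tau\in\tau^\circ$ or via an acyclic carrier argument, using that $\shF$ is locally constant on each open cell of $T^\bary$.

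The main obstacle is the sheaf-theoretic gluing in this base case: one must check that an interior-face-compatible family $(s_j)$ assembles into a genuine global section of $j_\tau^*\shF$ on the closed simplex $\tau$, not merely on $\tau^\circ$, even though $j_\tau^*\shF$ is only controlled to be constant on $\tau^\circ$ and may behave arbitrarily on $\partial\tau$. The key point is that the value of each $s_j$ near $\sigma_j\cap\partial\tau$ is determined by pulling back the putative section on $\tau$ along $\iota_j$, so the $T^\bary$-level compatibility on interior faces of $\tau^\circ$ forces a consistent behaviour on $\partial\tau$ as well. Once this base case is in place, the skeletal induction is standard homological bookkeeping, and yields the naturality of the isomorphism automatically from the naturality of the subdivision construction.
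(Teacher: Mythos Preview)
Your approach is correct but genuinely different from the paper's. The paper never writes down a subdivision chain map; instead it puts a second grading on $C^{T^\bary}_\bullet$ by $j=\dim\hat\tau-\dim\tau$ (where $\hat\tau\in T$ is the smallest $T$-simplex containing $\tau\in T^\bary$), observes that the boundary splits accordingly, and runs the spectral sequence of this filtration. The $E_1$-page is concentrated on a single row because for each $\sigma\in T$ the subcomplex $\{\tau\in T^\bary:\hat\tau=\sigma\}$ has the homology of the cell $(\sigma,\partial\sigma)$; that row is exactly $C^T_\bullet$, and degeneration at $E_2$ finishes. Your skeletal induction with the five-lemma is the long-exact-sequence incarnation of the very same filtration (by $\dim\hat\tau$, equivalently by $T$-skeleta), so the two arguments reduce to the identical local computation $H^{T^\bary}_\bullet(\sigma,\partial\sigma;j_\sigma^*\shF)\cong\Gamma(\sigma,j_\sigma^*\shF)[\dim\sigma]$. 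What your version buys is an explicit natural chain map realizing the isomorphism; what the paper's version buys is brevity.

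On your ``main obstacle'': you are right to flag it, and your proposed resolution is a little soft. Note that the paper does not prove this local step either --- it is packed into the phrase ``$\partial_2$ computes the homology of each cell''. The clean way through, which the paper invokes one lemma later, is the retraction-to-the-stalk argument: every $\tau'\in T^\bary$ has a distinguished vertex $b_{\sigma_0}$ (the barycenter of the lowest-dimensional member of its defining flag), and the straight-line retraction of $\tau'$ to that vertex identifies $\Gamma(\tau',j_{\tau'}^*\shF)$ with the stalk $\shF_{b_{\sigma_0}}$. With that identification in hand, the base case becomes the ordinary computation of $H_\bullet(\sigma,\partial\sigma)$ of a barycentrically subdivided simplex with coefficients that are constant along the cone direction, and your gluing-of-sections description in top degree goes through without the worry about behaviour on $\partial\tau$. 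I would replace your last paragraph with this argument.
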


\begin{proof}
For $\tau\in T^\bary$, let $\hat\tau$ denote the smallest simplex in
$T$ containing $\tau$. The chain complex $C_\bullet^{T^\bary}$
receives a second grading by setting $C_{i,j}^{T^\bary}=
\bigoplus_{{\tau\in T^\bary\setminus S^\bary}\atop{\dim\tau=i},
\dim\hat\tau-\dim\tau=j} \Gamma(\tau,j_\tau^*\shF)$ and the
differential splits $\partial= \partial_1+\partial_2$ into
components corresponding to the indices. Since $\partial_2$ computes
the homology of each cell in $T^\bary\setminus S^\bary$, we have
$C_i^T=H^{\partial_2}_0 (C_{i,\bullet}^{T^\bary})$ and
$H^{\partial_2}_k(C_{i,\bullet}^{T^\bary})=0$ for $k>0$ so that the
spectral sequence $E_1^{p,q}=H^{\partial_2}_{-q}
(C_{-p,\bullet}^{T^\bary})\Rightarrow H^{T^\bary}_{-p-q}(X,A;\shF)$
yields the result.
\end{proof}

Let $j_k$ denote the inclusion of the complement of the
$(k-1)$-skeleton in $X$, i.e. $$j_k:\left(\coprod_{{\tau\in
T}\atop{\dim\tau\ge k}}\tau^\circ \right)\hra X.$$ Consider the
decreasing filtration 
\[
\shF=\shF^{-1}\supset \shF^0\supset\shF^1\supset\ldots
\]
of $\shF$ defined by $\shF^k={(j_k)_!}{j_k^*}\shF$. Let $i_k$ denote
the inclusion of the $k$-skeleton in $X$ and $i_{\tau^\circ}$ denote
the inclusion of $\tau^\circ$ in the $k$-skeleton. We have
\[
\Gr^k_\shF = \shF^k/\shF^{k+1}=\bigoplus_{{\tau\in
T}\atop{\dim\tau=k}} (i_k)_*(i_{\tau^\circ})_!(\shF|_{\tau^\circ}).
\]

\begin{lemma} 
\label{reducetoGr}
We have the following commutative diagram with exact rows
\begin{equation*}
\xymatrix@C=30pt
{
0 \ar[r]& C_\bullet^{T^\bary}(\shF^{k+1})   \ar[r]\ar[d]&
C_\bullet^{T^\bary}(\shF^{k})               \ar[r]\ar[d]&
C_\bullet^{T^\bary}(\Gr^k_\shF)         \ar[r]\ar[d]& 0 \\
0 \ar[r]& \overrightarrow{C}_\bullet(\shF^{k+1}) \ar[r]&      
\overrightarrow{C}_\bullet(\shF^{k}) \ar[r]&      
\overrightarrow{C}_\bullet(\Gr^k_\shF) \ar[r]&       0 \\
}
\end{equation*}
\end{lemma}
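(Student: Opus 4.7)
The plan is to prove the two rows exact separately, with commutativity immediate from naturality of the vertical maps: on each cell $\tau \in T^\bary$, the vertical map sends a section $s \in \Gamma(\tau, j_\tau^* \shF^?)$ to the singular simplex $\sigma = j_\tau$ (after a fixed identification $\Delta^i \cong \tau$) together with $\sigma^* s$, and this construction is functorial in the sheaf coefficient, so it commutes with the maps induced by $\shF^{k+1} \to \shF^k \to \Gr^k_\shF$.

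For the top row I would argue cell by cell. Fix $\tau \in T^\bary \setminus S^\bary$ of dimension $i$ and let $\hat\tau \in T$ be the smallest simplex of $T$ containing $\tau$, so $\tau^\circ \subset \hat\tau^\circ$. Restricting the sheaf sequence $0 \to \shF^{k+1} \to \shF^k \to \Gr^k_\shF \to 0$ to $\tau$ and passing to global sections yields three cases. If $\dim\hat\tau \geq k+1$, then $\shF^{k+1}|_\tau$ and $\shF^k|_\tau$ both coincide with $j_\tau^*\shF$, while $\Gr^k_\shF|_\tau = 0$. If $\dim\hat\tau = k$, then $\tau \subseteq \hat\tau$ is contained in the $k$-skeleton of $T$, so $\shF^{k+1}|_\tau = 0$, and $\shF^k|_\tau \to \Gr^k_\shF|_\tau$ is an isomorphism. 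If $\dim\hat\tau < k$, all three sheaves vanish on $\tau$. In each case the sequence of sections over $\tau$ is manifestly exact, and summing over $\tau$ of dimension $i$ yields exactness of the top row at degree $i$.

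For the bottom row, left exactness of $\Gamma(\Delta^i, \sigma^*(-))$ applied to each singular $i$-simplex $\sigma$ gives exactness at the first two places, and this property persists under the direct limit along barycentric subdivision. The nontrivial point is surjectivity of the right map $\overrightarrow{C}_i(\shF^k) \to \overrightarrow{C}_i(\Gr^k_\shF)$. Given a singular simplex $\sigma:\Delta^i \to X$ and a section $s'' \in \Gamma(\Delta^i, \sigma^*\Gr^k_\shF)$, the obstruction to lifting $s''$ to a section of $\sigma^*\shF^k$ lies in $H^1(\Delta^i, \sigma^*\shF^{k+1})$. After sufficiently many iterated barycentric subdivisions, each subdivided piece maps into the open star of a single cell of $T$, on which $\shF^{k+1}$ becomes the extension by zero of a constant sheaf along an inclusion into a contractible open; one then checks that the relevant $H^1$ vanishes on such a piece, so the obstruction dies in the direct limit $\overrightarrow{C}_i$, giving surjectivity. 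The main obstacle is precisely this acyclicity-after-subdivision step for the extension-by-zero sheaf $\shF^{k+1}$: it requires combining the constructible structure of $\shF$ with the local topology of open stars in the cellular stratification, in the spirit of the subdivision arguments for singular homology with sheaf coefficients in \cite[V-1.3, VI-12]{Br97}.
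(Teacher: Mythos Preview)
Your argument for the top row has a genuine error. The invariant $\dim\hat\tau$ only records the stratum containing the \emph{interior} $\tau^\circ$, whereas $\Gamma(\tau,j_\tau^*(-))$ depends on the whole closed simplex. Concretely, take $\tau\in T^{\bary}$ with vertex chain $\sigma_0\subsetneq\cdots\subsetneq\sigma_m$ where $\dim\sigma_0=k$ and $\dim\sigma_m\ge k+1$. Then $\dim\hat\tau=\dim\sigma_m\ge k+1$, yet the vertex $b_{\sigma_0}\in\tau$ lies in $\sigma_0^\circ$, so $\Gr^k_\shF|_\tau$ is the skyscraper $\shF_{b_{\sigma_0}}$ at $b_{\sigma_0}$ and $\Gamma(\tau,j_\tau^*\Gr^k_\shF)\cong\shF_{b_{\sigma_0}}\neq 0$. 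Similarly $j_\tau^*\shF^{k+1}$ vanishes at $b_{\sigma_0}$ and is not $j_\tau^*\shF$. The section sequence \emph{is} still short exact in this example, but only because $\Gamma(\tau,j_\tau^*\shF^{k+1})=0$, a fact your case split does not provide. The paper handles the top row uniformly via a cohomological vanishing: every $\tau\in T^{\bary}$ has the distinguished vertex $b_{\sigma_0}$ (barycenter of the lowest-dimensional cell in its chain), and a retraction-to-the-stalk argument as in \cite[Proof of Lemma~5.5]{logmirror1} gives $H^1(\tau,j_\tau^*\shG)=0$ for any $T$-constructible $\shG$ (in particular for $\shF^{k+1}$), whence left-exactness of $\Gamma$ upgrades to a short exact sequence.

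For the bottom row your subdivision idea is correct, but the obstruction-theoretic packaging via $H^1(\Delta^i,\sigma^*\shF^{k+1})$ adds a step you then leave as ``one checks''. The paper's argument is both simpler and complete: since $\shF^k\to\Gr^k_\shF$ is a surjection of sheaves and pullback $s^{-1}$ is exact, any $g\in\Gamma(\tau,s^{-1}\Gr^k_\shF)$ lifts to $s^{-1}\shF^k$ on a finite open cover of the compact $\tau$; after finitely many barycentric subdivisions every sub-simplex lies in a single member of this cover and one lifts piecewise. No $H^1$ computation is required.
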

\begin{proof}
The vertical maps are the natural inclusions, commutativity is
straightforward. The left-exactness of the global section functor
leaves us with showing the exactness of the rows at the rightmost
non-trivial terms.  For the first row, note that the vertices of any
$\tau\in T^\bary$ are barycenters of simplices in $T$ of different
dimensions, so there is a unique vertex of $\tau$ corresponding to
the lowest-dimensional simplex in $T$.  This allows to apply a
retraction-to-the-stalk argument as in \cite[Proof of
Lemma~5.5]{logmirror1} to show that $H^1(\tau,j_\tau^*\shF)=0$, so
the first row is exact. We show the surjectivity of 
$\overrightarrow{C}_\bullet(\shF^{k}) \ra
\overrightarrow{C}_\bullet(\Gr^k_\shF)$. Let $s:\tau\ra X$ be a
singular simplex and $g\in \Gamma(\tau,s^{-1}(\shF_k/\shF_{k+1}))$.
By the exactness of $s^{-1}$ and the surjectivity of
$\shF_k\ra\shF_k/\shF_{k+1}$, we find an open cover $\{U_\alpha\}$
of $\tau$ such that  $g|_{U_\alpha}$ lifts to
$\hat{g}_\alpha\in\Gamma(U_\alpha,\shF_k)$. By the compactness of
$\tau$, we may assume the cover to be finite. After finitely many
iterated barycentric subdivisions of $\tau$, we may assume each
simplex of the subdivision to be contained in a $U_\alpha$ for some
$\alpha$. Let $\tau'$ be such a simplex contained in $U_\alpha$,
then $g|_{\tau'}$ lifts to $\hat{g}_\alpha|_{\tau'}$ and we are done
since it suffices to show surjectivity after iterated barycentric
subdivision.
\end{proof}

\begin{theorem} 
\label{simplicial=singular}
Let $X=\coprod_{\tau\in T} \tau^\circ$ be a $\triang$-complex, $A$ a
subcomplex and $\shF$ be a $T$-constructible sheaf on $X$.  For any
$i$, the natural map 
\[
H_i^T(X,A;\shF)\lra H_i(X,A;\shF)
\] is an isomorphism.
\end{theorem}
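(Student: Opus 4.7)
The plan is to reduce, via the skeletal filtration of $\shF$, to showing the isomorphism for extensions-by-zero of constant sheaves on open simplices, where it becomes the classical comparison of simplicial and singular homology with constant coefficients.

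First I would invoke Lemma~\ref{reduce-to-barycentric} to identify $H_i^T(X,A;\shF)$ with $H_i^{T^\bary}(X,A;\shF)$, so we may work with $T^\bary$ throughout. Second, it is standard that $H_i(X,A;\shF)$ coincides with the homology of $\overrightarrow{C}_\bullet(X,A;\shF)$: the iterated barycentric subdivision operator on singular chains is chain homotopic to the identity and so becomes an isomorphism after passing to the direct limit. Thus it suffices to show that the natural chain map $C_\bullet^{T^\bary}(X,A;\shF)\to \overrightarrow{C}_\bullet(X,A;\shF)$ of Lemma~\ref{reducetoGr} is a quasi-isomorphism.

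Next I would exploit the finite decreasing filtration $\shF\supset\shF^0\supset\shF^1\supset\cdots\supset 0$, which terminates because $\shF^k=0$ for $k>\dim X$. Lemma~\ref{reducetoGr} supplies, for each $k$, a commutative ladder of short exact sequences of chain complexes relating $\shF^{k+1}$, $\shF^{k}$, and $\Gr^k_\shF$ on the simplicial and singular sides. The resulting ladder of long exact sequences in homology, combined with a downward induction on $k$ starting from $\shF^{\dim X+1}=0$ and the five lemma, reduces the theorem to the assertion that
\[
C_\bullet^{T^\bary}(X,A;\Gr^k_\shF)\lra \overrightarrow{C}_\bullet(X,A;\Gr^k_\shF)
\]
is a quasi-isomorphism for every $k$. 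Since $\Gr^k_\shF$ splits as a direct sum over $k$-simplices $\tau\in T$ of the extensions-by-zero $\shG_\tau:=(i_{\tau^\circ})_!(\shF|_{\tau^\circ})$, and both sides commute with such direct sums, it suffices to treat a single summand $\shG_\tau$, whose stalk at any interior point of $\tau$ is some abelian group $V_\tau$.

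On the simplicial side, a section of $\shG_\tau$ over a simplex $\sigma\in T^\bary$ vanishes unless $\sigma$ lies inside $\tau^\circ$, in which case it is a copy of $V_\tau$; hence $C_\bullet^{T^\bary}(X,A;\shG_\tau)$ is the relative simplicial chain complex of the pair $(\tau,\partial\tau)$ in its barycentric triangulation with constant coefficients $V_\tau$. On the singular side, an analogous analysis, combined with the fact that any singular simplex admits after finitely many barycentric subdivisions a decomposition into pieces each either lying entirely in $\tau^\circ$ or entirely in $X\setminus\tau^\circ$, shows that $\overrightarrow{C}_\bullet(X,A;\shG_\tau)$ computes the relative singular homology of $(\tau,\partial\tau)$ with coefficients $V_\tau$. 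The desired quasi-isomorphism is then the classical comparison of simplicial and singular relative homology with constant coefficients applied to the pair $(\tau,\partial\tau)$.

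The main obstacle is the second identification. One must show carefully that, after pullback along singular simplices and passage to the direct limit under barycentric subdivision, sections of $\shG_\tau$ genuinely reassemble into relative singular chains of $(\tau,\partial\tau)$ with values in $V_\tau$. This requires control over the supports of such sections on subdivided singular simplices, in the same spirit as the surjectivity argument in the proof of Lemma~\ref{reducetoGr}, and a verification that the comparison map intertwines the simplicial and singular boundary operators with the correct orientation signs $\eps_{\tau\subset\sigma}$.
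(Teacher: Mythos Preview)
Your overall architecture matches the paper's: pass to $T^\bary$ via Lemma~\ref{reduce-to-barycentric}, reduce via Lemma~\ref{reducetoGr} and the five-lemma to the graded pieces, and split $\Gr^k_\shF$ into the summands $\shG_\tau=(i_k)_*(i_{\tau^\circ})_!(\shF|_{\tau^\circ})$. The genuine gap is in your identification of each summand with the pair $(\tau,\partial\tau)$.

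The assertion that $\Gamma(\sigma,j_\sigma^*\shG_\tau)$ vanishes unless $\sigma\subset\tau^\circ$ is false: in $T^\bary$ the only simplex contained in $\tau^\circ$ is the single vertex $\{v_\tau\}$, yet many larger simplices contribute. For a concrete test take $\tau$ a vertex of a circle $X$; then $\shG_\tau$ is the skyscraper sheaf at $\tau$, and both $T^\bary$-edges adjacent to $\tau$ give $\Gamma\cong V_\tau$. One computes $H_*^{T^\bary}(X;\shG_\tau)\cong V_\tau$ concentrated in degree~$1$, whereas your identification with $H_*(\tau,\partial\tau)=H_*(\mathrm{pt})$ predicts degree~$0$. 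Dually, for $\tau$ top-dimensional the correct answer sits in degree~$0$ but $(\tau,\partial\tau)$ places it in degree~$\dim\tau$. In general, for an interior $k$-cell in an $n$-manifold the homology of $\shG_\tau$ lives in degree $n-k$, not $k$: it records the \emph{normal} structure of $\tau$ inside $X$, not the internal structure of $\tau$. The same mis-identification carries over to your singular side, so even the ``main obstacle'' you flag is aimed at the wrong target.

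The paper's fix is to work transversally. With $U$ the open star of the barycenter $v_\tau$ in $T^\bary$, one identifies $\overrightarrow C_\bullet(X;\shG_\tau)$ with $\overrightarrow C_\bullet(U;V_\tau)/\overrightarrow C_\bullet(U\setminus\tau^\circ;V_\tau)$, hence with singular $H_*(U,U\setminus\tau^\circ;V_\tau)$. Retractions and excision then replace this by a simplicial pair $(\overline V,\overline V\setminus V)$ built from those $T^\bary$-simplices that contain $v_\tau$ and are disjoint from $\partial\tau$; the simplicial side $C_\bullet^{T^\bary}(X;\shG_\tau)$ is seen directly to equal the simplicial chain complex of that same pair with constant coefficients $V_\tau$. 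Only at that point does the classical simplicial--singular comparison with constant coefficients apply.
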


\begin{proof} 
There is also a natural map $H_i^{T^\bary}(X,A;\shF)\ra
H_i(X,A;\shF)$ and by Lemma~\ref{reduce-to-barycentric}, it suffices
to prove that this is an isomorphism. Moreover, by long exact
sequences of homology of a pair, it suffice to prove the absolute
case, so assume $A=\emptyset$.  By the long exact sequences in
homology associated to the rows in the diagram in
Lemma~\ref{reducetoGr} and the five-Lemma, it suffices to prove that
the embedding
\begin{equation}
\label{eq-Cs}
C_\bullet^{T^\bary}(\Gr^k_\shF)\lra \overrightarrow{C}_\bullet
(\Gr^k_\shF)
\end{equation}
induces an isomorphism in homology. The problem is local, so fix
some $k$-simplex $\sigma\in T$ and let $v_\sigma$ denote the
barycenter of $\sigma$. We define the open star of $v_\sigma$, a
contractible open neighbourhood of the interior of $\sigma$, by
\[
U=\coprod_{{\tau\in T^\bary}\atop{v_\sigma\in\tau}} \tau^\circ.
\]
Let $M$ be the stalk of $\shF$ at a point in $\sigma^\circ$. Note
that the right-hand side of \eqref{eq-Cs} can be identified with
$\overrightarrow{C}_\bullet(U;M)/ \overrightarrow{C}_\bullet
(U\setminus\sigma^\circ;M)$  where (by abuse of notation) $M$ also
denotes the constant sheaf with stalk $M$ on $U$, so it computes the
singular homology $H_\bullet(U,U\setminus\sigma^\circ;M)$.  Most
importantly, we have reduced the situation to singular homology with
constant coefficients, so we are allowed to apply standard
techniques like deformation equivalences as follows. The pair
$(U,U\setminus\sigma^\circ)$ retracts to $(V,V\setminus v_\sigma)$
where  $V=\coprod_{{\tau\in T^\bary,v_\sigma\in\tau}
\atop{\tau\cap\partial\sigma=\emptyset}} \tau^\circ$.  By excision,
we transition to the pair $(\overline{V},\overline{V} \setminus
v_\sigma)$ where $\overline{V}$ is the closure of $V$ in $X$.  On
the other hand, $\overline{V}\setminus v_\sigma$ retracts to
$\overline{V}\setminus V$ inside $\overline{V}$. Summarizing, we
obtain isomorphisms
\[
H_\bullet(U,U\setminus\sigma^\circ;{M}) = H_\bullet(V,V\setminus
v_\sigma;{M}) = H_\bullet(\overline{V},\overline{V}\setminus
v_\sigma;{M}) = H_\bullet(\overline{V},\overline{V}\setminus V;{M}).
\]
On the other hand, we identify the left-hand side of \eqref{eq-Cs} as
\[
\bigoplus_{k\ge 0}\bigoplus_{{\tau\in
T^\bary,\dim\tau=k}\atop{v_\sigma\in\tau,
\tau\cap\partial\sigma=\emptyset}} M
\]
which coincides with $C_\bullet^{T^\bary\cap \overline V}(\overline
V; M)/ C_\bullet^{T^\bary\cap  (\overline V\setminus V)}(\overline
V\setminus V; M)$  noting that $\overline V$ and $\overline
V\setminus V$ are $\triang$-sub-complexes of $X$. The result follows
from the known isomorphism of simplicial and singular homology for
constant coefficients
\[
H^{T^\bary\cap \overline
V}_\bullet(\overline{V},\overline{V}\setminus
V;{M})=H_\bullet(\overline{V},\overline{V}\setminus V;{M}),
\]
see for example \cite[Theorem 2.27]{Ha02}.
\end{proof}

%===========================================================

\subsection{A general homology-cohomology isomorphism for
constructible sheaves on topological manifolds}
\label{sec-hocoho-compare}
We fix the setup for the entire section.
\begin{setup} 
\label{setup-acyclic}
\begin{enumerate}
\item
Let $\P$ be a simplicial complex and $\Lambda$ a $\P$-constructible
sheaf on its topological realization $B$. We assume there is no
self-intersection of cells in $B$.
\item
We assume that $B$ is an oriented topological manifold possibly with
non-empty boundary $\partial B$. We set $n=\dim B$.
\item
For $\tau$ an $n$-dimensional simplex, let $U_\tau$ denote a small
open neighbourhood of $\tau$ in $B$. We denote $\P^\max=\{\tau\in\P
| \dim\tau=n\}$. We assume that the open cover $\fou=\{U_\tau|
\tau\in \P^\max\}$ is $\Lambda$-acyclic, i.e.
\[
H^i(U_{\tau_1}\cap\ldots\cap U_{\tau_k} ,\Lambda)=0
\]
for $i>0$ and any subset $\{\tau_1,\ldots,\tau_k\}\subseteq \P^\max$.
\end{enumerate}
\end{setup}
Note that $U_{\tau_1}\cap\ldots\cap U_{\tau_k}$ is a small open
neighbourhood of ${\tau_1}\cap\ldots\cap {\tau_k}$.

\begin{example} 
If $\P'$ is a polyhedral complex that glues to an oriented
topological manifold $B$ and $\Lambda$ is a $\P'$-constructible
sheaf then the barycentric subdivision $\P$ of $\P'$ satisfies the
conditions of Setup~\ref{setup-acyclic}. 
%Indeed (3) is deduced via Lemma~\ref{quotient-is-poset} and
%Lemma~\ref{poset-acyclic} as every barycentric simplex contains a
%unique minimal barycenter with respect to the ordering of $\P'$.
\end{example}

Fixing an orientation of each $\tau\in \P$, we can define the chain
complex $C^\P_\bullet(B,\partial B;\Lambda)$ as in
Definition~\ref{def-homology}, in particular 
\begin{equation}
\label{eq-def-homology}
C^\P_i(B,\partial B;\Lambda)  =
\bigoplus_{{\dim\tau=i}\atop{\tau\not\subset\partial
B}}\Gamma(\tau,\Lambda).
\end{equation}
To keep notation simple and since $H^\P_i(B,\partial B;\Lambda)=
H_i(B,\partial B;\Lambda)$ by Theorem~\ref{simplicial=singular},  we
denote $H^\P_i(B,\partial B;\Lambda)$ by $H_i(B,\partial B;\Lambda)$
and also $C^\P_i(B,\partial B;\Lambda)$ by $C_i(B,\partial
B;\Lambda)$. We denote by $C^\bullet(B,\Lambda)$ the \v{C}ech
complex for $\Lambda$ with respect to $\fou$ and some total
ordering of $\P^\max$. Given
$I=\{\tau_0,\ldots,\tau_i\}\subset\P^\max$, we use the notation
$U_I=U_{\tau_0}\cap \ldots\cap U_{\tau_i}$, so 
\begin{equation}
\label{def-cohomology}
C^i(B,\Lambda) = \bigoplus_{|I|=i+1}\Gamma(U_I,\Lambda).
\end{equation}
The purpose of this section is to define a natural isomorphism
$H^\P_\bullet(B,\partial B;\Lambda)\ra H^{n-\bullet}(B; \Lambda)$.
We call $\P$ co-simplicial if the intersection of any set of $k+1$
many maximal cells is either empty or a $(n-k)$-dimensional simplex.
In the co-simplicial case the index sets of the sums of
\eqref{eq-def-homology} and \eqref{def-cohomology} for $C_i^\P$ and
$C^{n-i}$ respectively are naturally in bijection (ignoring empty
$U_I$) and the map would be straightforwardly defined as an
isomorphism of complexes that is on each term given by
\begin{equation}
\label{co-simplicial-case}
\Gamma(\tau_0\cap\ldots\cap\tau_i,\Lambda)
\stackrel{\sim}{\longrightarrow} \Gamma(U_{\tau_0}\cap\ldots\cap
U_{\tau_i},\Lambda)
\end{equation}
up to some sign convention.  Note that in the co-simplicial case, a
cell in $\partial B$ cannot be written as an intersection of maximal
cells, so we need to take homology relative to the boundary. We are
going to show (see Proposition~\ref{prop-map-ho-coho}) that such a
map can be generalized to a $\P$ that is not co-simplicial by
replacing the right-hand side of \eqref{co-simplicial-case} by a
complex  $C^\bullet_{\tau_0\cap\ldots\cap\tau_i}(B,\Lambda)$.  Note
that each non-empty $U_I$ has a unique cell $\tau$ in $\P$ that is
maximal with the property of being contained in it.  Fixing this
cell $\tau$, gathering all terms in the \v{C}ech complex for open
sets $U_I$ where $\tau$ is this unique maximal cell, yields a
subcomplex $C^\bullet_\tau(B,\Lambda)$. In fact, we have a
decomposition of the group $C^\bullet(B,\Lambda)$ by setting
\[
C^i(B,\Lambda)=\bigoplus_{\tau\in\P} C^i_\tau(B,\Lambda) \qquad
\hbox{ where }  \qquad C^i_\tau(B,\Lambda) = \bigoplus_{\left\{
I\left| {|I|=i+1, \tau\subset U_I}\atop{\sigma\not\subset
U_I\hbox{\tiny whenever }\tau\subsetneq\sigma}\right.\right\}}
\Gamma(U_I,\Lambda).
\]
We consider the decreasing filtration $F^\bullet$ by sub-complexes
of $C^\bullet(B,\Lambda)$ given by 
\[
F^kC^i(B,\Lambda) = \bigoplus_{{\tau\in\P}\atop{\codim\tau\ge k}}
C^i_\tau(B,\Lambda).
\]
We define the associated $k$th graded complex by
\[
\Gr_F^kC^i(B,\Lambda) = F^kC^i(B,\Lambda)/F^{k+1}C^i(B,\Lambda)
\]
which yields a direct sum of complexes
\[
\Gr_F^kC^\bullet(B,\Lambda)=\bigoplus_{{\tau\in\P}\atop{\codim\tau=k}}
C^\bullet_\tau(B,\Lambda)
\]
turning each $C^\bullet_\tau(B,\Lambda)$ into a complex.

\begin{lemma} 
\label{lemma-gradedcoho}
We have 
\[
H^i(\Gr_F^kC^\bullet(B,\Lambda))
=\left\{\begin{array}{ll}
\bigoplus_{{\tau\in\P,\tau\not\subset\partial B}\atop{\codim\tau=k}}
\Gamma(\tau,\Lambda) &\hbox{for }i=k,\\
0&\hbox{otherwise.}
\end{array}\right.
\]
\end{lemma}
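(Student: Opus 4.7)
The graded piece splits as $\Gr_F^k C^\bullet(B,\Lambda) = \bigoplus_{\codim \tau = k} C^\bullet_\tau(B,\Lambda)$, so I analyze $C^\bullet_\tau$ for a fixed cell $\tau$ of codimension $k$. Let $V_\tau = \bigcup_{\sigma \in \P^\max, \sigma \supset \tau} U_\sigma$ be the open star of $\tau$ and $\fou_\tau = \{U_\sigma : \sigma \in \P^\max, \sigma \supset \tau\}$ its $\Lambda$-acyclic sub-cover. Every intersection $U_I$ for $I \subset \fou_\tau$ lies in $V_\tau$ and has minimal $\P$-cell $\bigcap_{\sigma\in I}\sigma \supseteq \tau$, so the filtration $F^\bullet$ restricts to $\check{C}^\bullet(\fou_\tau, \Lambda)$ with $F^k|_{V_\tau} = C^\bullet_\tau(B,\Lambda)$ and $F^{k+1}|_{V_\tau} = 0$, realizing $C^\bullet_\tau$ as the top graded piece of a filtered complex that computes $H^\bullet(V_\tau,\Lambda)$.

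The combinatorics of $C^\bullet_\tau$ are governed by the link $L_\tau$ of $\tau$ in $\P$, a triangulation of $S^{k-1}$ when $\tau \not\subset \partial B$ and of a contractible $(k-1)$-ball when $\tau \subset \partial B$. Each $\sigma \in \fou_\tau$ corresponds to a maximal $(k-1)$-simplex $\sigma/\tau$ of $L_\tau$, and $\bigcap_{\sigma \in I}\sigma = \tau$ is equivalent to $\bigcap_{\sigma \in I}(\sigma/\tau) = \emptyset$ in $L_\tau$. Letting $N_\tau$ be the nerve of the closed covering of $L_\tau$ by its maximal simplices, the indexing of $C^i_\tau$ consists precisely of the $(i+1)$-subsets of $\fou_\tau$ lying outside $N_\tau$, and the restriction maps $\Gamma(U_I,\Lambda) \to \Gamma(\tau,\Lambda)$ assemble into a morphism of complexes
\[
\Phi_\tau \colon C^\bullet_\tau(B,\Lambda) \lra \Gamma(\tau,\Lambda) \otimes \bar{C}^\bullet_{N_\tau^c},
\]
where $\bar{C}^\bullet_{N_\tau^c}$ is the sub-complex of the augmented cochain complex of the full simplex on vertex set $\fou_\tau$ supported on subsets outside $N_\tau$.

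I show $\Phi_\tau$ is a quasi-isomorphism by descending induction on $k$. The base case $k=0$ reduces to $C^\bullet_\tau = [\Gamma(U_\tau,\Lambda)]$ in degree zero, and the retraction of $U_\tau$ to $\tau$ combined with the acyclicity hypothesis gives $\Gamma(U_\tau,\Lambda) = \Gamma(\tau,\Lambda)$. For $k \geq 1$, the spectral sequence of $F^\bullet|_{V_\tau}$ has, by the inductive hypothesis applied to cells $\tau' \supsetneq \tau$ of codimension $<k$, all columns other than $p=k$ of the predicted form. The contractibility of $V_\tau$ onto $\tau^\circ$ together with the $\Lambda$-acyclicity of $\fou_\tau$ forces $H^\bullet(V_\tau,\Lambda)$ to be concentrated in degree zero with value $\Gamma(\tau,\Lambda)$, which pins down the remaining $p=k$ column and finishes the induction.

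It remains to compute $H^\bullet(\bar{C}^\bullet_{N_\tau^c})$. The short exact sequence $0 \to \bar{C}^\bullet_{N_\tau^c} \to \bar{C}^\bullet(\Delta^{\fou_\tau}) \to \tilde{C}^\bullet(N_\tau) \to 0$ combined with the acyclicity of the augmented cochain complex of the full simplex yields $H^i(\bar{C}^\bullet_{N_\tau^c}) \cong \tilde{H}^{i-1}(N_\tau)$. Since all non-empty intersections of maximal simplices of $L_\tau$ are themselves simplices and thus contractible, the nerve lemma supplies a homotopy equivalence $N_\tau \simeq L_\tau$. For $\tau$ interior this gives $\tilde{H}^{i-1}(N_\tau) \cong \tilde{H}^{i-1}(S^{k-1})$, equal to $\Gamma(\tau,\Lambda)$ in degree $i=k$ and zero otherwise; for $\tau \subset \partial B$ the link is contractible and $\tilde{H}^\bullet(N_\tau)=0$. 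The main obstacle is establishing that $\Phi_\tau$ is a quasi-isomorphism, since for a general constructible sheaf $\Gamma(U_I,\Lambda)$ may not agree stalk-wise with $\Gamma(\tau,\Lambda)$; the spectral-sequence argument is precisely what allows us to bypass a direct stalk comparison.
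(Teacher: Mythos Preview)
The attempt has a genuine gap in the inductive step. You claim that knowing the abutment $H^\bullet(V_\tau,\Lambda)$ together with the columns $E_1^{p,\bullet}$ for $p<k$ ``pins down'' the remaining column $E_1^{k,\bullet}$. This is not how spectral sequences work: from the abutment and a partial $E_1$-page one cannot in general recover the missing column. Concretely, for $q<0$ the term $E_1^{k,q}=H^{k+q}(C^\bullet_\tau)$ can receive a higher differential $d_{1-q}$ from the $q=0$ row, and vanishing of $E_\infty^{k,q}$ only tells you that this differential is surjective, not that its target was zero to begin with. For $k\ge 2$ the groups $C_\tau^i$ with $1\le i<k$ are typically nonzero (two maximal simplices can meet in a cell of any codimension), so this is a real obstruction. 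A five-lemma repair via a map of filtered complexes would require the induced map on $\Gr^p$ for $p<k$ to be a quasi-isomorphism; but your termwise restriction lands in $\Gamma(\tau,\Lambda)\otimes(\cdot)$, and on $\Gr^p$ this induces $\Gamma(\tau',\Lambda)\to\Gamma(\tau,\Lambda)$ on cohomology, which is not an isomorphism in general.

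More to the point, the detour does not actually bypass the fact you flag as the ``main obstacle''. Already in your base case $k=0$ you invoke $\Gamma(U_\tau,\Lambda)=\Gamma(\tau,\Lambda)$ via a retraction, and in the inductive step you need $H^\bullet(V_\tau,\Lambda)=\Gamma(\tau,\Lambda)$ concentrated in degree $0$ (incidentally, $V_\tau$ retracts to the closed simplex $\tau$, not to $\tau^\circ$). But the identical regular-neighbourhood argument gives $\Gamma(U_I,\Lambda)=\Gamma(\tau,\Lambda)$ directly for \emph{every} $U_I$ whose maximal cell is $\tau$, since each such $U_I$ is a small neighbourhood of $\tau$ and $\Lambda$ is $\P$-constructible. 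The paper simply uses this at the outset to factor
\[
C^\bullet_\tau(B,\Lambda)\;=\;C^\bullet_\tau(B,\ZZ)\otimes_\ZZ\Gamma(\tau,\Lambda),
\]
reducing immediately to the $\ZZ$-coefficient case. The remaining $\ZZ$-computation is then handled via the short exact sequence $0\to C^\bullet_\tau(B,\ZZ)\to C^\bullet(\fou_\tau,\ZZ)\to \overline C^\bullet_\tau(\ZZ)\to 0$, identified with the local-cohomology sequence $H^\bullet_\tau(K,\ZZ)\to H^\bullet(K,\ZZ)\to H^\bullet(K\setminus\tau,\ZZ)$ for $K$ the closed star; your nerve/link calculation of $\tilde H^{\,\bullet}(N_\tau)\cong\tilde H^{\,\bullet}(L_\tau)$ is correct and is essentially the same computation in different clothing.
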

\begin{proof} Since
\[
\Gr_F^kC^\bullet(B,\Lambda)=\bigoplus_{{\tau\in\P}\atop{\codim\tau=k}}
C_\tau^\bullet(B,\ZZ)\otimes_\ZZ \Gamma(\tau,\Lambda),
\]
it suffices to show that $H^i(C_\tau^\bullet(B,\ZZ))$ is isomorphic
to $\ZZ$ when $\codim\tau=i$ and trivial otherwise.  The set
$\fou_\tau=\{U_\sigma\in\fou| \tau\subset\sigma \}$ covers
an open ball containing $\tau$.  Let $C^\bullet(\fou_\tau,\ZZ)$
denote the associated \v{C}ech complex. We have a short exact
sequence of complexes
\begin{equation}
\label{Utau-ses}
0\lra  C_\tau^\bullet(B,\ZZ) \lra C^\bullet(\fou_\tau,\ZZ)\lra
\overline{C}_\tau^\bullet(\ZZ) \lra 0
\end{equation}
where 
\[
\overline{C}_\tau^i(\ZZ) =
\bigoplus_{{I=\{\sigma_0,\ldots,\sigma_i\}} \atop{{\{U_{\sigma_0},
\ldots,U_{\sigma_i}\}\subset\fou_\tau}
\atop{\tau\neq\sigma_0\cap\ldots\cap\sigma_i}}} \Gamma(U_I,\ZZ)
\]
is the induced cokernel. Denoting
$K=\bigcup_{\sigma\in\P,\tau\subseteq\sigma}\sigma$, one finds the
sequence \eqref{Utau-ses} is naturally identified with a sequence of
\v{C}ech complexes computing the long exact sequence
\[
\ldots\lra H^i_\tau(K,\ZZ)\lra H^i(K,\ZZ)\lra
H^i(K\setminus\tau,\ZZ) \lra\ldots
\]
We have
\[
K\setminus\tau\hbox{ is }
\left\{
\begin{array}{l} 
\hbox{homotopic to }S^d\hbox{ with }d=\codim\tau-1\hbox{ if
}\tau\not\subset\partial B\\ \hbox{contractible if
}\tau\subset\partial B
\end{array}\right.
\]
Since $K$ is contractible we get that $H^i_\tau(K,\ZZ)=0$ for all
$i$ if $\tau\subset\partial B$, that is, $C^\bullet_\tau(B,\ZZ)$ is
exact in this case. Otherwise, we find $H^i_\tau(K,\ZZ)\cong\ZZ$ for
$i=\codim\tau$ and trivial otherwise. The choice of the isomorphism
depends on the orientation of $S^d$ which can be taken to be the
induced one from the orientations of $B$ and $\tau$.
\end{proof}

Consider the spectral sequence
\begin{equation}
E_1^{p,q}=H^{p+q}(\Gr^p_F C^\bullet(B,\Lambda)) \Rightarrow
H^{p+q}(B,\Lambda).
\label{spec-seq-ho-coho}
\end{equation}
It degenerates at $E_2$ because its $E_1$ page is concentrated in
$q=0$ by Lemma~\eqref{lemma-gradedcoho}. Let $d_1^{p,q}:E_1^{p,q}\ra
E_1^{p+1,q}$ denote the differential of the $E_1$ page.

\begin{proposition} 
\label{prop-map-ho-coho}
By Lemma~\eqref{lemma-gradedcoho}, we have  $$H^{n-i}(\Gr_F^{n-i}
C^\bullet(B,\Lambda)) =
\bigoplus_{{\tau\in\P,\tau\not\subset\partial B}\atop{\dim\tau=i}}
\Gamma(\tau,\Lambda)$$ and therefore an identification
\[
f:C_i(B,\partial B;\Lambda)\lra H^{n-i}(\Gr^{n-i}_F C^{\bullet}(B;\Lambda)).
\]
This turns into a map of complexes (varying $i$) when taking
$\partial_i$ and $d^{n-i,0}_1$ for the differentials respectively.
\end{proposition}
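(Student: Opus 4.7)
The plan is to verify that the termwise isomorphism $f$ coming from Lemma~\ref{lemma-gradedcoho} intertwines the simplicial boundary $\partial_i$ on the left with the spectral sequence differential $d_1^{n-i,0}$ on the right, up to sign conventions that will be pinned down in the course of the argument.

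The first step is immediate from Lemma~\ref{lemma-gradedcoho}: for each $\tau\in\P$ with $\dim\tau=i$ and $\tau\not\subset\partial B$, the summand $\Gamma(\tau,\Lambda)$ of $C_i(B,\partial B;\Lambda)$ corresponds under $f$ to $H^{n-i}(C^\bullet_\tau(B,\Lambda))$, a summand of $E_1^{n-i,0}$. The only choice here is the orientation-dependent isomorphism $H^{\codim\tau}_\tau(K,\ZZ)\cong\ZZ$ used in the proof of that lemma, which we pin down by the convention introduced there: the induced orientation on the local sphere $S^{\codim\tau-1}$ coming from the orientations of $B$ and $\tau$.

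Next, I would make $d_1^{n-i,0}$ explicit. For $s\in\Gamma(\sigma,\Lambda)$ with $\dim\sigma=i$, select a \v{C}ech cocycle $c_\sigma\in C^{n-i}_\sigma(B,\Lambda)$ whose class equals $f(s)$, and lift it to $c\in F^{n-i}C^{n-i}(B,\Lambda)$ by extending by zero on the other $\Gr_F^{n-i}$-summands. Since $c_\sigma$ is already a cocycle in $C^\bullet_\sigma$, $\delta c$ lies in $F^{n-i+1}C^{n-i+1}$, and we project to $\Gr^{n-i+1}_F C^{n-i+1}=\bigoplus_{\dim\tau'=i-1}C^{n-i+1}_{\tau'}$. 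The key combinatorial observation, which I would verify by inspecting the indexing sets $I$ with $|I|=n-i+2$ appearing in $\delta c$, is that only facets $\tau\subset\sigma$ contribute: any other $(i-1)$-cell $\tau'$ cannot arise as the unique maximal cell in $\P$ contained in a $U_I$ that is also an intersection of opens with non-trivial contribution from $c_\sigma$.

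For each facet $\tau\subset\sigma$, the projected contribution corresponds under $f^{-1}$ to $s|_\tau$ weighted by a local orientation sign, obtained by comparing the orientation on the sphere $S^{\codim\tau-1}$ about $\tau$ with the product of the normal direction $n_\tau$ of $\sigma$ along $\tau$ and the orientation on $S^{\codim\sigma-1}$ about $\sigma$. The main obstacle I anticipate is precisely this sign-matching: I would need to argue that the sphere decomposition $S^{\codim\tau-1}\simeq S^0*S^{\codim\sigma-1}$ (suspension of the smaller sphere by the $S^0$ recording the two sides of $\sigma$ near $\tau$) yields, under the orientation conventions of Lemma~\ref{lemma-gradedcoho}, exactly the sign $\varepsilon_{\tau\subset\sigma}$ from Definition~\ref{def-homology}. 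Once this local orientation comparison is carried out at each pair $\tau\subset\sigma$, commutativity of $f$ with the differentials follows immediately.
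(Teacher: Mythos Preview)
Your proposal follows essentially the same strategy as the paper: reduce to a single facet pair $\tau\subset\sigma$, compute $d_1$ as lift--apply $\delta$--project, and then match orientations. The combinatorial observation that only facets of $\sigma$ can appear in the $\Gr^{n-i+1}_F$-projection of $\delta c_\sigma$ is correct and is implicit in the paper as well.

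The one place where your treatment is less precise than the paper's is the orientation step. The paper makes this concrete by introducing transversal balls $B_\sigma$ (dimension $n-i$) and $B_\omega$ (dimension $n-i+1$, $\omega$ the facet) with $B_\sigma\subset\partial B_\omega$, identifying $C^\bullet_\sigma(B,\ZZ)$ with a complex computing $H^\bullet_{B^\circ_\sigma}(B_\sigma,\ZZ)$, and then tracking the $d_1$-component through the chain of isomorphisms
\[
H^{n-i}_{B^\circ_\sigma}(B_\sigma,\ZZ)\to H^{n-i}_{B^\circ_\sigma}(\partial B_\omega,\ZZ)\to H^{n-i}(\partial B_\omega,\ZZ)\to H^{n-i+1}_{B^\circ_\omega}(B_\omega,\ZZ).
\]
This makes the sign $\varepsilon_{\omega\subset\sigma}$ fall out of the comparison of induced orientations on $\partial B_\omega$ directly. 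Your suspension picture $S^{\codim\tau-1}\simeq S^0*S^{\codim\sigma-1}$ is topologically correct, but the parenthetical ``$S^0$ recording the two sides of $\sigma$ near $\tau$'' is not quite right: $\tau$ is a facet of $\sigma$, so $\sigma$ lies on only one side of $\tau$, and in the link $S^{n-i}$ of $\tau$ the cell $\sigma$ appears as a single point (a pole), with the link $S^{n-i-1}$ of $\sigma$ appearing as a small sphere near that pole rather than as the equator. You can still push the orientation comparison through after this correction, but the transversal-ball model in the paper is cleaner and avoids the need to chase an isotopy from the small linking sphere to the equator. You also do not address the case where the facet lies in $\partial B$; this is harmless since Lemma~\ref{lemma-gradedcoho} shows both sides vanish there, but it should be mentioned.
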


\begin{proof}  
We need to show that $f$ commutes with differentials, i.e. that
$d^{\bullet,\bullet}_{1}f=f\partial_\bullet$.  It will be sufficient
to show for an $i$-simplex $\tau$ and a facet $\omega$ of $\tau$
with $\omega\not\subset\partial B$ that for any element
$\alpha\in\Gamma(\tau,\Lambda)$, we have\\
%first column
\begin{minipage}[b]{0.7\textwidth}
\[
f((\partial_i\alpha)_\omega)=(d_1^{n-i,0}f \alpha)_\omega.
\]
where $(\partial_i\alpha)_\omega$ denotes the projection of
$\partial_i\alpha$ to ${\Gamma(\omega,\Lambda)}$ and similarly
$(d_1^{n-i,0}f \alpha)_\omega$ denotes the projection of
$d_1^{n-i,0}f \alpha$ to ${H^{n-i+1}C_\omega^{\bullet}(B,\Lambda)}$.
We first do the case $\Lambda=\ZZ$. Let $B_\sigma$ denote a suitably
embedded closed ball of dimension $n-\dim\sigma$ in $B$ meeting
$\sigma$ transversely (in a point) where $\sigma$ stands for $\tau$
or $\omega$. 
\end{minipage}
%second column
\begin{minipage}[b]{0.3\textwidth}
\qquad \includegraphics{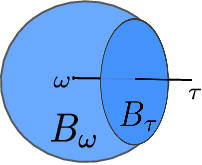}
\end{minipage}
We can arrange it such that $B_\tau$ is part of the boundary of
$B_\omega$, see the illustration above.  The point of this is that
the \v{C}ech complex $C^\bullet_\sigma(B,\ZZ)$ naturally computes
$H^{\bullet}_{B^\circ_\sigma}(B_\sigma,\ZZ)$ where $B_\sigma^\circ$
is the relative interior of $B_\sigma$. We claim that the component
\[
d_1^{n-i,0}: H^{n-i}C_\tau^{\bullet}(B,\ZZ) \lra
{H^{n-i+1}C_\omega^{\bullet}(B,\ZZ)} 
\]
is given by the sequence of maps
\[
H^{n-i}_{B^\circ_\tau}(B_\tau,\ZZ)\lra
H^{n-i}_{B^\circ_\tau}(\partial B_\omega,\ZZ) \lra H^{n-i}(\partial
B_\omega,\ZZ)\lra H^{n-(i-1)}_{B^\circ_\omega}(B_\omega,\ZZ).
\]
that are isomorphisms for $i<n$.
Indeed, a generator of $H^{n-i}(C_\tau^\bullet(B,\ZZ))$ is
represented by an element in 
\[
\bigoplus_{\left\{ I \left| {|I|=i+1, \tau\subset
U_I}\atop{\sigma\not\subset U_I\hbox{\tiny whenever
}\tau\subsetneq\sigma}\right.\right\}} \Gamma(U_I,\ZZ)
\] 
and this can be viewed as well as an element of
\[
\bigoplus_{\left\{ I \left| \omega\subset U_I,
|I|=i+1\right.\right\}} \Gamma(U_I,\ZZ_{\partial B_\omega})
\] 
where $\ZZ_{\partial B_\omega}$ denotes the constant sheaf supported
on $\partial B_\omega$.  The latter element gives an element
(actually a generator if $i<n$) of $H^{n-i}(\partial B_\omega,\ZZ)$
which then clearly maps to a generator of 
$H^{n-(i-1)}_{B^\circ_\omega} (B_\omega,\ZZ)$ under the \v{C}ech
differential
\[
\bigoplus_{\left\{ I \left| \omega\subset U_I,
|I|=i+1\right.\right\}} \Gamma(U_I,\ZZ_{\partial B_\omega})\lra 
\bigoplus_{\left\{ I \left| {|I|=i+2, \omega\subset
U_I}\atop{\sigma\not\subset U_I\hbox{\tiny whenever
}\omega\subsetneq\sigma}\right.\right\}} \Gamma(U_I,\ZZ).
\]
One checks that the orientations also match, so if $\omega$ has the
induced orientation from $\tau$ then the orientation of $B_\tau$ is
the induced one from $\partial B_\omega$ so there is no sign change
whereas there was one if this was opposite just as for the component
$\Gamma(\tau,\ZZ)\ra \Gamma(\omega,\ZZ)$ of $C_\bullet(B,\ZZ)$.  We
have thus proven the assertion for the case $\Lambda=\ZZ$.  The
general case follows directly as the component of the differentials
we considered is then just additionally tensored with the
restriction map $\Gamma(\tau,\Lambda)\ra \Gamma(\omega,\Lambda)$ in
the source as well as in the target of $f$.
\end{proof}

\begin{theorem}
\label{cor-iso-ho-coho}
The map $f$ induces a natural isomorphism \[
H_i(B,\partial B;\Lambda)\lra H^{n-i}(B;\Lambda).
\]
\end{theorem}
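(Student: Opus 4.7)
The plan is to derive the theorem as an essentially formal consequence of what has already been established. By Proposition~\ref{prop-map-ho-coho}, the map $f$ is an \emph{isomorphism} of abelian groups in each degree (both sides are the direct sum $\bigoplus_{\dim\tau=i,\,\tau\not\subset\partial B} \Gamma(\tau,\Lambda)$ via Lemma~\ref{lemma-gradedcoho}), and it intertwines the simplicial boundary $\partial_i$ with the $E_1$-differential $d_1^{n-i,0}$. Hence $f$ is in fact an isomorphism of chain complexes
\[
f\colon C_\bullet(B,\partial B;\Lambda)\xrightarrow{\;\sim\;} \bigl(E_1^{n-\bullet,\,0},\, d_1^{n-\bullet,\,0}\bigr).
\]
Taking homology/cohomology yields a natural isomorphism
\[
H_i(B,\partial B;\Lambda)\cong H^{n-i}\!\bigl(E_1^{\bullet,0},d_1^{\bullet,0}\bigr)=E_2^{n-i,\,0}.
\]

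Next I would invoke the spectral sequence \eqref{spec-seq-ho-coho}. By Lemma~\ref{lemma-gradedcoho} the $E_1$ page is concentrated in the single row $q=0$, so all higher differentials $d_r$, $r\ge 2$, are automatically zero, and the spectral sequence degenerates at $E_2$. Moreover, since only one row survives, the abutment collapses to $H^{p}(B,\Lambda)=E_\infty^{p,0}=E_2^{p,0}$ (no extension problem). Applied at $p=n-i$ this gives $E_2^{n-i,0}=H^{n-i}(B,\Lambda)$, and composing with the isomorphism of the previous paragraph produces the desired identification $H_i(B,\partial B;\Lambda)\cong H^{n-i}(B,\Lambda)$.

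For naturality, I would observe that every ingredient is functorial in $\Lambda$: the filtration $F^\bullet$ on $C^\bullet(B,\Lambda)$ is defined purely from the combinatorics of the cover $\fou$ and from $\Lambda$, the identification $H^{n-i}(\Gr_F^{n-i} C^\bullet(B,\Lambda))=\bigoplus_\tau \Gamma(\tau,\Lambda)$ in Lemma~\ref{lemma-gradedcoho} is natural in $\Lambda$ (it comes from a canonical choice of orientation data on $B$), and the spectral sequence \eqref{spec-seq-ho-coho} and its edge map are natural by construction. Thus $f$ and the induced isomorphism are natural.

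The only point that really needs checking is therefore already contained in Proposition~\ref{prop-map-ho-coho}, namely that $f$ commutes with the two differentials with the correct signs; the main obstacle — if any — was the bookkeeping with orientations of the transverse balls $B_\tau\subset\partial B_\omega$ to match $\varepsilon_{\omega\subset\tau}$ with the sign appearing in $d_1^{n-i,0}$. Once that compatibility is in hand, as it is in the proof of Proposition~\ref{prop-map-ho-coho}, the present theorem is immediate.
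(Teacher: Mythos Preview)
Your proposal is correct and follows essentially the same route as the paper: both use that, by Lemma~\ref{lemma-gradedcoho}, the $E_1$ page of the spectral sequence \eqref{spec-seq-ho-coho} is concentrated in the row $q=0$, so the sequence degenerates at $E_2$ with no extension problem, and Proposition~\ref{prop-map-ho-coho} then identifies $C_\bullet(B,\partial B;\Lambda)$ with $(E_1^{n-\bullet,0},d_1)$. Your write-up is in fact more explicit than the paper's terse two sentences, which phrase the same argument as passing to associated gradeds and then ``removing $\Gr_F^\bullet$'' because only one graded piece is nonzero.
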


\begin{proof} 
By Proposition~\ref{prop-map-ho-coho} we thus obtain an isomorphism
$\Gr^\bullet_F H_i(B,\Lambda)\ra \Gr_F^\bullet H^{n-i}(B;\Lambda)$
where the filtration $F$ on homology is defined in the
straightforward manner.  We may remove $\Gr_F^\bullet$ from this map
because the graded pieces are concentrated in a single degree again
by Lemma~\ref{lemma-gradedcoho}.
\end{proof}

%===========================================================

		% end the bibliography

\end{document}